\theoremstyle{plain}
\newtheorem{thm}{Theorem}[section]
\newtheorem{prop}[thm]{Proposition}
\newtheorem{lem}[thm]{Lemma}
\newtheorem{conj}{Conjecture}
\theoremstyle{definition}
\newtheorem{defn}[thm]{Definition}
\theoremstyle{remark}
  \def\C{{\mathbb{C}}}   \def\F{{\mathbb{F}}}        \def\N{{\mathbb{N}}}  \def\P{{\mathbb{P}}}  \def\R{{\mathbb{R}}}        \def\Z{{\mathbb{Z}}}
            \def\cM{{\mathcal{M}}} \def\cN{{\mathcal{N}}}    \def\cR{{\mathcal{R}}}        
\newcommand\vre{\varepsilon}
\newcommand\varpesilon{\vre}
\newcommand\varespilon{\vre}
\newcommand\ev{\operatorname{ev}}
\newcommand\Haus{\operatorname{Haus}}
\newcommand\id{\operatorname{id}}
\renewcommand\Im{\operatorname{Im}}
\newcommand\orb{\operatorname{orb}}
\newcommand\Prob{\operatorname{Prob}}
\newcommand\rea{\operatorname{Re}}
\newcommand\Span{\operatorname{span}}
\newcommand\tr{\operatorname{tr}}
\newcommand\Tr{\operatorname{Tr}}
\newcommand{\ip}[1]{\langle #1 \rangle}
\newcounter{pcounter}
\begin{document}

\title{A random matrix approach to the Peterson-Thom conjecture}      
\author{Ben Hayes}\thanks{The author gratefully acknowledges support from  NSF Grants  DMS-1600802 , DMS-1827376, and DMS-2000105.}
\address{University of Virginia\\
          Charlottesville, VA 22904}
\email{brh5c@virginia.edu}
\date{\today}
\maketitle

\begin{abstract}
The Peterson-Thom conjecture asserts that any diffuse, amenable subalgebra of a free group factor is contained in a \emph{unique} maximal amenable subalgebra. This conjecture is motivated by related results in Popa's deformation/rigidity theory and Peterson-Thom's results on $L^{2}$-Betti numbers. We present an approach to this conjecture in terms of so-called \emph{strong convergence} of random matrices by formulating a conjecture which is a natural generalization of the Haagerup-Thorbj\o rnsen theorem whose validity would imply the Peterson-Thom conjecture. This random matrix conjecture is related to recent work of Collins-Guionnet-Parraud.

\end{abstract}


\section{Introduction}
Amenability is arguably the central concept in von Neumann algebra theory. Celebrated and fundamental work of Connes \cite{Connes} shows that amenable von Neumann algebras are precisely the hyperfinite ones and also gives several equivalent forms of amenability. Because of this famous work, amenable algebras are well understood and completely classified.

Given our understanding of amenable von Neumann algebras, it is natural to try to bootstrap our knowledge of arbitrary von Neumann algebras from our knowledge of the amenable ones. This naturally motivates the study of \emph{maximal amenable subalgebras} of a von Neumann algebra, those subalgebras which are amenable and are maximal with respect to inclusion among amenable subalgebras. In a landmark discovery \cite{PopaMaximalAmenable}, Popa showed that $L(\Z)$ is a maximal amenable subalgebra of $L(\F_{r})=L(\Z*\F_{r-1})$ for any $r\in \N$ (here and throughout the paper $\F_{r}$ denotes the free group on $r$ letters). This provided the first example of a maximal amenable subalgebra that was abelian (a phenomenon that was unexpected at the time), and it gave a negative answer to a related problem of Kadison stated during the Baton Rouge conference in 1967.
In fact, Popa's work establishes the more general result that $L(\Z)$ is \emph{maximal Gamma} in $L(\F_{r})$. The foundational insight of Popa was the usage of his \emph{asymptotic orthogonal property} to establish maximal amenability.
Many authors have used Popa's asymptotic orthogonal property to establish maximal amenability in various cases, see \cite{Ge96, Shen06, FangMaximalAmena, CFRW, Gao10, Brothier, Houdayerexotic, CyrilSAOP,Bleary}.

 Popa's deformation/rigidity theory also allows one to deduce strong ``malnormality" properties of free group factors. For example, primeness of all nonamenable subfactors \cite{PetersonDeriva}, as well as  the celebrated \emph{strong solidity} of free group factors \cite{OzPopaCartan}.
Much of the developments in deformation/rigidity theory go beyond free group factors and apply to   von Neumann algebras associated to  groups which have a combination of approximation properties and nontrivial cohomology \cite{Po01a, PopaL2Betti, PopaStrongRigidity, PopaStrongRigidtyII,PetersonDeriva, OzPopaII,  ChifanSinclair}, as well as crossed product algebras associated to actions of such groups. See \cite{PopaICM,Va06a,Va10a,Io12b,Io17c} for further results, including resolutions of long-standing open problems.
These developments parallel, and frequently require input from, the theory of $L^{2}$-Betti numbers for groups (developed in \cite{Atiyah}) as well as equivalence relations (developed in \cite{Gab2}). Given these connections, we should expect in general that results from the theory of $L^{2}$-Betti numbers will have natural analogues for von Neumann algebras.
In \cite{PetersonThom}, Peterson-Thom proved various indecomposability and malnormality results for groups with positive first $L^{2}$-Betti number. Based on their work, and  previous work of Ozawa-Popa, Peterson, and Jung \cite{OzPopaCartan, PetersonDeriva, JungSB}, they conjectured the following for von Neumann algebras.

\begin{conj}\label{conj:PT intro}
Fix $r>1.$ If $Q$ is a von Neumann subalgebra of $L(\F_{r})$ which is both diffuse and amenable, then there is a unique maximal amenable von Neumann subalgebra $P$ of $L(\F_{r})$ with $Q\subseteq P.$
\end{conj}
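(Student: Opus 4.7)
The plan is to reduce Conjecture~\ref{conj:PT intro} to an asymptotic statement about strong convergence of random matrices, using Jung's 1-bounded entropy $h$ as a bridge. Since amenable subalgebras have vanishing 1-bounded entropy, it suffices to show that every diffuse amenable $Q \subseteq L(\F_r)$ satisfies the \emph{relative} entropy estimate $h(Q : L(\F_r)) = 0$. A Jung-type uniqueness principle then implies that any diffuse $Q$ with this property lies inside a unique subalgebra $P \subseteq L(\F_r)$ that is maximal with respect to $h(\cdot : L(\F_r)) = 0$ among extensions of $Q$. Every amenable extension of $Q$ is automatically contained in such a $P$, and a standard argument shows $P$ is itself amenable, thereby giving the unique maximal amenable extension predicted by the conjecture.

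To bound $h(Q : L(\F_r))$ by zero, one must show that the matricial microstates for a fixed set of generators of $L(\F_r)$ which extend a prescribed microstate sequence for $Q$ are concentrated in a subexponentially small operator-norm neighborhood. This is where strong convergence enters: approximate $Q$ by deterministic matrices $D_k^{(n)}$ (drawn from a hyperfinite model) and the free generators of $L(\F_r)$ by independent GUE matrices $X_j^{(n)}$. The conjecture I would formulate asserts that the joint tuple $(X_j^{(n)}, D_k^{(n)})$ converges \emph{strongly} to $(s_1, \ldots, s_r, d_1, \ldots, d_s)$, where $(s_j)$ are free semicircular elements freely independent from the copy of the hyperfinite factor generated by the $d_k$. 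This is a natural generalization of the Haagerup-Thorbj\o rnsen theorem, and is closely tied to recent work of Collins-Guionnet-Parraud \cite{CGPStrongTen}. Granted this strong convergence, any competing microstate for $L(\F_r)$ extending the given microstate for $Q$ must lie, after unitary conjugation, within a small operator-norm neighborhood of the matrix model; the covering numbers for the relevant unitary orbits grow only subexponentially in the pertinent scale, producing the required entropy bound.

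The main obstacle is the random matrix conjecture itself. Haagerup-Thorbj\o rnsen's original technique relies on a contour integral expansion that crucially uses the simplicity of free semicircular elements with scalar coefficients. Allowing arbitrary matrix coefficients converging strongly to elements of the hyperfinite factor requires sharp linearization tricks together with operator-valued resolvent estimates that are delicate even in the tensor-product cases handled by \cite{CGPStrongTen}. Establishing such strong convergence uniformly across all possible deterministic hyperfinite models is where essentially all of the new analytic work lies; any complete solution will likely have to exploit concentration phenomena finer than those available in the unconstrained Haagerup-Thorbj\o rnsen framework, and to interact carefully with the structure of the hyperfinite factor $R$ in which the $D_k^{(n)}$ are chosen.
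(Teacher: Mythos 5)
First, note that the statement is a conjecture: the paper does not prove it, it reduces it to a random-matrix statement (Theorem \ref{T:main}), and your proposal is likewise only a reduction. The problem is that your reduction has the entropy step inverted. That a diffuse amenable $Q$ satisfies $h(Q:L(\F_{r}))=0$ is the trivial direction (hyperfinite algebras have vanishing $1$-bounded entropy in the presence of anything; Properties P2 and P3), and subadditivity (P6) then yields a unique Pinsker algebra $P\supseteq Q$, maximal with $h(P:L(\F_{r}))\leq 0$, which contains every amenable extension of $Q$. But your assertion that ``a standard argument shows $P$ is itself amenable'' is precisely the hard, open implication --- equivalently, that every nonamenable $N\leq L(\F_{r})$ has $h(N:L(\F_{r}))>0$, which is statement (\ref{item: Pinskers are amenable}) of Theorem \ref{T:main}. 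This is not standard; it is the entire content of the paper's reduction, and your proposal supplies no mechanism for it. Without it, $P$ could a priori be nonamenable and nothing follows about maximal amenable extensions.

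The random-matrix input you formulate also cannot supply that mechanism. Strong convergence of independent GUE matrices jointly with deterministic matrices that converge strongly to a hyperfinite model is essentially the Male and Collins--Male theorem and is already known, but it does not see nonamenability. The paper's conjecture instead concerns $(X^{(k)}\otimes 1_{M_{k}(\C)},1_{M_{k}(\C)}\otimes Y^{(k)})$ for \emph{two independent} GUE families, converging strongly to $(s\otimes 1,1\otimes s)$ in $C^{*}(s)\otimes_{\min}C^{*}(s)$. The tensor with a second independent copy is essential because nonamenability of $Q$ is witnessed by Haagerup's criterion $\bigl\|\frac{1}{r}\sum_{j}u_{j}\otimes\overline{u_{j}}\bigr\|_{\infty}<1$ in $M\overline{\otimes}M^{op}$; combined with local reflexivity of $C^{*}(s)$, strong convergence of the tensor tuple forces the microstates for a nonamenable $Q$ induced by two independent samples to stay a definite $d^{\orb}$-distance apart, while exponential concentration forces them to collapse to a single unitary orbit whenever $h(Q:L(\F_{r}))\leq 0$; the contradiction proves (\ref{item: Pinskers are amenable}). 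Your covering-number estimate (``orbits grow only subexponentially'') only yields upper bounds on $h$, i.e.\ the direction that is already free, and nothing in your setup produces the lower bound on orbit distances that makes nonamenability visible.
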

For the rest of the article, if $M$ is a von Neumann algebra, we will use the notation $N\leq M$ to mean that $N$ is a unital, von Neumann subalgebra of $M.$ Given $N\leq M$ with $M$ a finite von Neumann algebra and $N$ diffuse, we say that $N$ has the \emph{absorbing amenability property} (see \cite[Theorem 4.1]{CyrilAOP}) if whenever $Q\leq M$ is amenable and $Q\cap N$ is diffuse, we have $Q\subseteq N.$ An equivalent way of phrasing Conjecture \ref{conj:PT intro} is to say that if $r>1,$ then any maximal amenable $N\leq L(\F_{r})$ has the absorbing amenability property. For many examples of maximal amenable subalgebras of free group factors this has been verified \cite{WenAOP, 2AuthorsOneCup, ParShiWen}, and typically uses a generalization of Popa's asymptotic orthogonality property, called the \emph{strong asymptotic orthogonality property} implicitly defined in \cite[Theorem 3.1]{CyrilAOP}. Many exciting recent works \cite{BC2015, OzAbsor, HBAbsor} apply an alternative method using an analysis of states.

The fact that we can show the absorbing amenability property for many examples of maximal amenable subalgebras of free group factors is strong evidence for the Peterson-Thom conjecture, but as of yet the methods of proof for these examples have not led to a general approach to the problem. The goal of this paper is to provide such an approach through Voiculescu's free entropy dimension theory and random matrices. Free entropy dimension theory was initiated by Voiculescu in a series of papers \cite{FreeEntropyDimensionII,FreeEntropyDimensionIII}, and provides a powerful method to deduce indecomposability and malnormality results for free group factors (among other algebras). For example, Voiculescu used free entropy dimension, in combination with his previously established random matrix results \cite{VoicAsyFree,VoicAsyFreeStrong}, to give the first proof of absence of Cartan subalgebras in free group factors \cite{FreeEntropyDimensionIII}. Shortly after this work, \emph{primeness}  and \emph{thinness} of free group factors were first established by Ge, Ge-Popa using free entropy dimension theory \cite{GePrime,PopaGeThin}. See \cite{DykemaFreeEntropy, JungSB, HoudayerShlyakhtenko} for other applications.
Popa's deformation/rigidty theory and asymptotic orthogonality techniques apply to a wider range of algebras than free group factors/amalgamated free products, and do not require the Connes approximate embedding property for their applicability. However, some indecomposability results for free group factors shown using free entropy dimension theory cannot currently be approached by deformation/rigidity theory or the (strong) asymptotic orthogonal property \cite{DykemaFreeEntropy, PopaGeThin, Me6, FreePinsker}.

We now present our result which reduces the Peterson-Thom conjecture to a natural random matrix problem. It requires usage of the \emph{$1$-bounded entropy},  which was implicitly defined in \cite{JungSB} and explicitly in \cite{Me6}.  If $N\leq M$ are diffuse, tracial von Neumann algebras the \emph{$1$-bounded entropy of $N$ in the presence of $M$} (denoted $h(N:M)$) is some sort of measurement of ``how many" ways there are to ``simulate" $N$ by matrices which have an extension to a ``simulation" of $M$ by matrices (see Definition \ref{defn:S1B} for the precise definition). Throughout the paper, we say that a random self-adjoint matrix $X\in M_{k}(\C)_{s.a.}$ is \emph{GUE distributed} if \[\{X_{ii}:i=1,\cdots,k\}\cup \{\sqrt{2} \rea{X_{ij}}:1\leq i<j\leq k\}\cup \{\sqrt{2}\Im{X_{ij}}:1\leq i<j\leq k\}\]
is an independent family of Gaussian random variables each with mean $0$ and variance $\frac{1}{k}.$ For a natural number $k,$ we use $\C\ip{T_{1},T_{2},\cdots,T_{k}}$ for the $\C$-algebra of noncommutative polynomials in $k$-variables (i.e. the free $\C$-algebra in $k$ indeterminates).

\begin{thm}\label{T:main}
Fix an integer $r\geq 2.$ Consider the following statements.
\begin{enumerate}[(i)]
    \item \label{item:PT intro} If $Q\leq L(\F_{r})$ is diffuse and amenable, then there is a unique maximal amenable $P\leq L(\F_{r})$ with $Q\leq P.$
    \item If $Q\leq L(\F_{r})$ is nonamenable, then $h(Q:L(\F_{r}))>0.$ \label{item: Pinskers are amenable}
    \item For $k\in \N,$ let $X_{1}^{(k)},\cdots,X_{r}^{(k)}$,$Y_{1}^{(k)},\cdots, Y_{r}^{(k)}$ be random, self-adjoint $k\times k$ matrices which are independent and are each GUE distributed. Set $X^{(k)}\otimes 1_{M_{k}(\C)}=(X_{i}^{(k)}\otimes 1_{M_{k}(\C)})_{i=1}^{r}$, $1_{M_{k}(\C)}\otimes Y=(1_{M_{k}(\C)}\otimes Y_{i}^{(k)})_{i=1}^{r}.$ Let $s=(s_{1},\cdots,s_{r})$ be $r$ free-semicirculars each with mean zero and variance $1.$ Let $s\otimes 1_{C^{*}(s)}=(s_{i}\otimes 1_{C^{*}(s)})_{i=1}^{r}$, $1_{C^{*}(s)}\otimes s=(1_{C^{*}(s)}\otimes s_{i})_{i=1}^{r}\in (C^{*}(s)\otimes_{\min{}} C^{*}(s))^{r}.$
    Then with high probability the law of $(X^{(k)}\otimes 1_{C^{*}(s)},1_{C^{*}(s)}\otimes Y^{(k)})$ tends (as $k\to\infty$) to the law of $ (s\otimes 1_{C^{*}(s)},1_{C^{*}(s)}\otimes s)$ strongly. Namely, for every polynomial $P\in \C\ip{T_{1},\cdots,T_{r},T_{r+1},\cdots,T_{2r}}$ we have \begin{equation}\label{E:haus conv intro}\|P(X^{(k)}\otimes 1_{M_{k}(\C)},1_{M_{k}(\C)}\otimes Y^{(k)})\|_{\infty}\to_{k\to\infty}\|P(s\otimes 1_{C^{*}(s)},1_{C^{*}(s)}\otimes s)\|_{C^{*}(s)\otimes_{\min{}}C^{*}(s)}
    \end{equation}
    in probability.
    \label{item:tensor Haus}
    \end{enumerate}
Then (\ref{item:tensor Haus}) implies (\ref{item: Pinskers are amenable}) implies (\ref{item:PT intro}).
\end{thm}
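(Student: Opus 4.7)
\medskip

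\noindent\textbf{Plan for (ii) $\Rightarrow$ (i).} My plan is to combine two known features of the $1$-bounded entropy: Jung's theorem that amenable algebras satisfy $h(N)=0$, and the subadditivity of $h(\,\cdot\,:L(\F_r))$ under joins with a diffuse intersection (an inequality of the form $h(N_1 \vee N_2 : L(\F_r)) \leq h(N_1 : L(\F_r)) + h(N_2 : L(\F_r))$ whenever $N_1 \cap N_2$ is diffuse). Let $Q \leq L(\F_r)$ be diffuse amenable, and let $P_1, P_2$ be two maximal amenable subalgebras of $L(\F_r)$ containing $Q$. Jung's theorem gives $h(P_i:L(\F_r)) \leq h(P_i)=0$, and since $Q \subseteq P_1 \cap P_2$ is diffuse, the join inequality yields $h(P_1 \vee P_2 : L(\F_r))=0$. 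The contrapositive of (ii) then forces $P_1 \vee P_2$ to be amenable, and maximality of $P_1$ (and $P_2$) gives $P_1 = P_1 \vee P_2 = P_2$.

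\medskip

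\noindent\textbf{Plan for (iii) $\Rightarrow$ (ii).} I prove the contrapositive: assuming $h(Q:L(\F_r))=0$, I aim to show $Q$ is amenable. By Connes's tensor-product characterization of amenability, it suffices to show that for every $y_1,\ldots,y_n \in Q$,
\[
\Big\| \sum_i y_i \otimes y_i^{\mathrm{op}} \Big\|_{B(L^2 Q)} \;=\; \Big\| \sum_i y_i \otimes y_i^{\mathrm{op}} \Big\|_{\min}.
\]
Write $y_i = p_i(s_1,\ldots,s_r)$ for polynomials $p_i$, and for independent GUE samples $X^{(k)}, Y^{(k)}$ set $A_i = p_i(X^{(k)})$, $B_i = p_i(Y^{(k)})$. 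Hypothesis (iii) applied to the polynomials $\sum_i p_i(T_1,\ldots,T_r) p_i(T_{r+1},\ldots,T_{2r})^*$ yields, in probability,
\[
\Big\| \sum_i A_i \otimes B_i^* \Big\|_\infty \;\xrightarrow[k\to\infty]{}\; \Big\| \sum_i y_i \otimes y_i^{\mathrm{op}} \Big\|_{\min}.
\]
The hypothesis $h(Q:L(\F_r))=0$ should, through Jung's microstate-covering framework, produce (with high probability) unitaries $U^{(k)} \in U(k)$ for which $U^{(k)} A_i (U^{(k)})^* \approx B_i$ in a sense strong enough to control polynomial operator norms. Conjugating the second tensor factor by $U^{(k)}$ is an isometry of $M_{k^2}$, so
\[
\Big\| \sum_i A_i \otimes B_i^* \Big\|_\infty \;\approx\; \Big\| \sum_i A_i \otimes A_i^* \Big\|_\infty.
\]
The right-hand side is the norm on $L^2(M_k)$ of the standard-form operator $\xi \mapsto \sum_i A_i \xi A_i$, and the single-copy Haagerup-Thorbj\o rnsen strong convergence of $X^{(k)}$, together with an ultraproduct / standard-form argument, should identify its $k\to\infty$ limit with $\| \sum_i y_i \otimes y_i^{\mathrm{op}} \|_{B(L^2 Q)}$. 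Comparing the two limits forces equality of the standard-form and minimal tensor norms, giving amenability of $Q$.

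\medskip

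\noindent\textbf{Main obstacle.} The heart of the argument is the upgrade from $h(Q:L(\F_r))=0$, which is a priori a $\|\cdot\|_2$-covering statement, to an operator-norm-compatible unitary alignment $U^{(k)} A_i (U^{(k)})^* \approx B_i$ that can be inserted into the tensor-norm calculation. The strong convergence in (iii), coupled with single-copy Haagerup-Thorbj\o rnsen, should provide the necessary bootstrap between the $\|\cdot\|_2$- and $\|\cdot\|_\infty$-scales on polynomial expressions in the microstates; making this bootstrap precise, and simultaneously pinning down the matrix tensor-norm limit as the standard-form norm on $L^2 Q$, is where the essential technical work is concentrated.
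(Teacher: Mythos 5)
Your argument for (ii) $\Rightarrow$ (i) is correct and is essentially the paper's: the paper routes it through the unique Pinsker algebra containing $Q$, but the engine is the same combination of $h(\text{amenable})=0$, monotonicity, and subadditivity of $h(\,\cdot\,:L(\F_{r}))$ over joins with diffuse intersection.

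For (iii) $\Rightarrow$ (ii) there are two genuine gaps. First, you cannot ``write $y_{i}=p_{i}(s_{1},\dots,s_{r})$ for polynomials $p_{i}$'': elements of an arbitrary $Q\leq L(\F_{r})=W^{*}(s)$ are only SOT-limits of polynomials, and replacing $y_{i}$ by a $\|\cdot\|_{2}$-close polynomial gives no control on $\|\sum_{i} p_{i}(s)\otimes \overline{p_{i}(s)}\|_{\min}$ (min-tensor norms are not continuous under bounded SOT approximation). This is precisely where the paper invokes local reflexivity of $C^{*}(s)$ (available because $C^{*}(s)$ is exact): the relevant finite-dimensional operator system inside $W^{*}(s)$ is approximated by completely positive contractions into $C^{*}(s)$, and complete contractivity is what survives tensoring and keeps $\|\sum_{j} Q_{j,m}(s)\otimes\overline{Q_{j,m}(s)}\|_{\min}$ bounded by the corresponding quantity for the original unitaries. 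Without this, the reduction from the von Neumann algebra $Q$ to polynomial test elements does not go through.

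Second, the step you flag as the ``main obstacle'' --- upgrading the $\|\cdot\|_{2}$-alignment $U^{(k)}A_{i}(U^{(k)})^{*}\approx B_{i}$ to something controlling $\|\sum_{i}A_{i}\otimes B_{i}^{*}\|_{\infty}$ --- is not a technical detail awaiting a bootstrap; no such upgrade exists, since $\|\cdot\|_{2}$-small perturbations can move operator norms of tensor expressions by $O(1)$. The paper runs the estimate in the opposite direction: assuming $Q$ nonamenable, Haagerup's criterion produces $u_{j}\in\mathcal{U}(Qp)$ with $\|\frac{1}{r}\sum_{j}u_{j}\otimes\overline{u_{j}}\|_{\min}<1$; one then expands the purely $L^{2}$ quantity $d^{\orb}(F(A^{(k)}),F(B^{(k)}))^{2}$ and bounds its cross term from above by $\|P(B^{(k)})\|_{2}\,\|\sum_{j}Q_{j,m}(A^{(k)})\otimes\overline{Q_{j,m}(B^{(k)})}\|_{\infty}$ via the identification $M_{k}(\C)\otimes M_{k}(\C)\cong B(S^{2}(k,\tr))$ and Cauchy--Schwarz. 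Strong convergence of the \emph{independent} pair (this is where (iii) enters) caps that operator norm by $C'r$ with $C'<1$, giving a uniform positive lower bound on $d^{\orb}(F(A^{(k)}),F(B^{(k)}))$, which contradicts the microstates collapse furnished by $h(Q:L(\F_{r}))\leq 0$ together with exponential concentration of the GUE ensemble (an ingredient your outline also needs but does not mention). Relatedly, your proposed identification of $\lim_{k}\|\sum_{i}A_{i}\otimes A_{i}^{*}\|_{\infty}$ (same sample in both legs) with the standard-form norm on $L^{2}(Q)$ is unjustified: the paper's closing section shows that same-sample tensor laws never converge strongly when the limit algebra is nonamenable, so that limit is exactly as inaccessible as the statement you are trying to prove.
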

As we remark explicitly in Sections \ref{S:main theorems}, \ref{sec: Jung} the GUE ensemble can be replaced by the Haar unitary ensemble (with the limit distribution being free Haar unitaries) and many of the other random matrix ensembles from random matrix theory (this is implied by the more general Theorem \ref{T:more general main theorem intro}). We state the results for the GUE ensemble mostly for convenience.
Strictly speaking, so-called \emph{strong convergence} is the conjunction of (\ref{E:haus conv intro}) with
\begin{equation}\label{eqn: conv in law intro}
\frac{1}{k^{2}}\Tr\otimes \Tr(P(X^{(k)}\otimes 1_{M_{k}(\C)},1_{M_{k}(\C)}\otimes Y^{(k)}))\to_{k\to\infty}\tau\otimes \tau(P(s\otimes 1_{C^{*}(s)},1_{C^{*}(s)}\otimes s))
\end{equation}
for every noncommutative polynomial $P\in\C\ip{T_{1},\cdots,T_{r},T_{r+1},\cdots,T_{2r}}$, where $\tau$ is the underlying tracial state on $C^{*}(s_{1},\cdots,s_{r})$. However, the fact that (\ref{eqn: conv in law intro}) holds for every $P\in\C\ip{T_{1},\cdots,T_{r},T_{r+1},\cdots,T_{2r}}$ is already a consequence of Voiculescu's asymptotic freeness theorem \cite{VoicAsyFree}.
The concept of strong convergence arises from groundbreaking work of Haagerup-Thorbj\o rnsen \cite{HTExt} who showed that the  law of an $r$-tuple of independent, $k\times k$ GUE distributed matrices converges strongly to the law of an $r$-tuple of freely independent semicircular variables which each have mean zero and variance $1.$ This work opened up an array of powerful tools which have been used in combination with delicate analytic and combinatorial arguments to establish strong convergence for many other ensembles, and also for a mixture of random and deterministic ensembles \cite{Male, MaleCollins, CollinsBordenave}. In particular, recent work  \cite{PisierSubExp, CGPStrongTen} shows that if $m_{k}$ is a sequence of positive integers with $|m_{k}|\leq Ck^{1/3}$ for some constant $C\geq 0,$ and if $X^{(k)}$ is an $r$-tuple of independent, $k\times k$ GUE distributed matrices, and $Y^{(m_{k})}$ is an $r$-tuple of independent, $m_{k}\times m_{k}$ GUE distributed matrices chosen independent of $X^{(k)},$ then (in the notation of the above theorem) the law of $(X^{(k)}\otimes 1_{M_{m_{k}}(\C)},1_{M_{k}(\C)}\otimes Y^{(m_{k})})$ converges strongly to the law of $(x\otimes 1_{C^{*}(x)},1_{C^{*}(x)}\otimes x).$ This was later improved to $|m_{k}|\leq C\frac{k}{(\log k)^{3}}$ \cite[Proposition 9.3]{MatrixConc}.  While this work does not quite resolve our conjecture (we need $m_{k}=k$), it nevertheless lends positive evidence to the validity of our approach.

Our work actually establishes something slightly more general than the above. To state this more general result requires as input the notion of exponential concentration of measure, a well-established tool in probability and geometric functional analysis (see Definition \ref{defn:ECM} for the precise definition). It also uses the highly general noncommutative functional calculus of Jekel initiated in \cite{JekelConvexPot, JekelEAP, FreePinsker}. We recall the precise construction in Section \ref{S:nc func calc}, but for now the reader should simply know that for $r\in \N$ and $R\in [0,\infty)$ there is a space $\mathcal{F}_{R,r,\infty}$ of ``noncommutative functions" defined on the $R$-ball of any von Neumann algebra which are uniformly $L^{2}$-continuous in an appropriate sense and have the property that if $(M,\tau)$ is any tracial von Neumann algebra, and $x\in M^{r}$ has $\|x_{j}\|\leq R,j=1,\cdots,r$ then given any $y\in W^{*}(x),$ there is an $f\in \mathcal{F}_{R,r,\infty}$ with $f(x)=y.$  For  natural numbers $k,r,$ we define a pseudometric $d^{\orb{}}$ on $M_{k}(\C)^{r}$ as follows. For $A\in M_{k}(\C),$ we set $\|A\|_{2}=\frac{1}{k}\Tr(A^{*}A).$ We then define
\[d^{\orb{}}(A,B)=\inf_{U\in \mathcal{U}(k)}\left(\sum_{j=1}^{r}\|UA_{j}U^{*}-B_{j}\|_{2}^{2}\right)^{1/2}.\]
We also use the notational convention that if $C\in M_{k}(\C)$, $D=(D_{1},D_{2},\cdots,D_{r})\in M_{k}(\C)^{r}$ then $CD=(CD_{1},CD_{2},\cdots,CD_{r})\in M_{k}(\C)^{r}$ and similarly for $DC.$ For $A\in M_{K}(\C)$, we let $A^{t}$ be its transpose.
The following is our more general random matrix result.

\begin{thm}\label{T:more general main theorem intro}
Let $X^{(k)}=(X_{j}^{(k)})_{j=1}^{l}$ be a tuple of $n(k)\times n(k)$-random matrices. Suppose that
\begin{itemize}
    \item There is an $(R_{j})_{j=1}^{l}\in [0,\infty)^{l}$ so that for all $j$
    \[\lim_{k\to\infty} \mathbb P(\|X_{j}^{(k)}\|_{\infty}\leq R_{j})=1.\]
    \item The law of $X^{(k)}$ converges in probability to the law of a tuple $x=(x_{j})_{j=1}^{l}$ in a tracial von Neumann algebra $(M,\tau)$ with  $M=W^{*}(x_{1},\cdots,x_{l}).$
    \item The probability distribution of $X^{(k)}$ exhibits exponential concentration of measure at scale $n(k)^{2}$ as $k\to\infty.$
\end{itemize}

\begin{enumerate}[(i)]
    \item \label{item:microstates collapse intro} Suppose that $Q\leq M$ is finitely generated, diffuse, and $h(Q:M)\leq 0.$ Suppose $y\in Q^{r}$ with $Q=W^{*}(y)$ and write $y=f(x)$ for some $f\in\mathcal{F}_{R,r,\infty}$. Then there exists $A^{(k)}\in M_{n(k)}(\C)^{l}$ such that $A^{(k)}$ converges to $x$ in law and so that
    \[d^{\orb}(f(X^{(k)}),f(A^{(k)}))\to 0,\]
    in probability. Namely, for every $\varepsilon>0$
    \[\P(d^{\orb}(f(X^{(k)}),f(A^{(k)}))<\varepsilon)\to_{k\to\infty}1.\]
    \item \label{item:Pinskers are amenable intro} Assume  that $C^{*}(x)$ is locally reflexive.
    Let $Y^{(k)}=(Y_{j}^{(k)})_{j=1}^{l}$ be an independent copy of $(X_{j}^{(k)})_{j=1}^{l}.$ If $(X^{(k)}\otimes 1_{M_{n(k)}}(\C)),1_{M_{n(k)}(\C)}\otimes (Y^{(k)})^{t})$ converges strongly in probability to $(x\otimes 1_{C^{*}(x)^{op}},1_{C^{*}(x)}\otimes x^{op}),$ then any diffuse $Q\leq M$ with $h(Q:M)\leq 0$ is necessarily amenable. In particular, given any diffuse, amenable $Q\leq M$ there is a unique maximal amenable $P\leq M$ with $Q\subseteq P.$
\end{enumerate}

\end{thm}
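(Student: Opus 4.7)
The plan is two-staged: first prove part (\ref{item:microstates collapse intro}) using the covering-number description of $h(Q:M) \leq 0$ together with the hypothesized exponential concentration of measure, and then deduce part (\ref{item:Pinskers are amenable intro}) by feeding the resulting $d^{\orb}$-collapse into the strong convergence hypothesis and invoking a minimal-versus-binormal comparison criterion for amenability.

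For (\ref{item:microstates collapse intro}), the hypothesis $h(Q:M) \leq 0$ translates (via Definition \ref{defn:S1B}) to the statement that for every $\varepsilon, \gamma > 0$ the microstate set
\[
\Xi^{(k)}_{\varepsilon, \gamma} := \{f(A) : A \in \Gamma^{(k)}_R(x;\gamma)\}
\]
has $d^{\orb}$-covering number at scale $\varepsilon$ of order $\exp(o(n(k)^2))$. Since $X^{(k)}$ converges to $x$ in law and $\|X^{(k)}_j\|_\infty \leq R_j$ with high probability, $f(X^{(k)})$ lies in $\Xi^{(k)}_{\varepsilon, \gamma}$ with high probability. Exponential concentration at scale $n(k)^2$ applied to the $\|\cdot\|_2$-Lipschitz functions $A \mapsto d^{\orb}(f(A), c)$, as $c$ ranges over a subexponentially large $d^{\orb}$-$\varepsilon$-net of $\Xi^{(k)}_{\varepsilon, \gamma}$, combined with a union bound forces $f(X^{(k)})$ to lie within $d^{\orb}$-distance $O(\varepsilon)$ of a deterministic point with probability $\to 1$. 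Applying the same reasoning to an independent microstate $A^{(k)}$ of $x$ and using the triangle inequality yields $d^{\orb}(f(X^{(k)}), f(A^{(k)})) \to 0$ in probability.

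For (\ref{item:Pinskers are amenable intro}), suppose toward a contradiction that $Q \leq M$ is diffuse with $h(Q:M) \leq 0$ but nonamenable, and let $y = f(x)$ generate $Q$. Apply (\ref{item:microstates collapse intro}) with $A^{(k)} := Y^{(k)}$ to obtain random unitaries $U^{(k)} \in \mathcal{U}(n(k))$ such that $\sum_i \|U^{(k)} f(X^{(k)})_i U^{(k)*} - f(Y^{(k)})_i\|_2^2 \to 0$ in probability. Combining the strong convergence hypothesis with the uniform polynomial approximability of $f \in \mathcal{F}_{R,r,\infty}$ on the operator-norm $R$-ball yields, for each noncommutative polynomial $P$,
\[
\|P(f(X^{(k)}) \otimes 1, 1 \otimes f(Y^{(k)})^t)\|_{M_{n(k)^2}(\C)} \longrightarrow \|P(y \otimes 1, 1 \otimes y^{op})\|_{C^*(y) \otimes_{\min} C^*(y)^{op}}
\]
in probability. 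Conjugation by $1 \otimes U^{(k),t}$ on the second factor is isometric, so $d^{\orb}$-closeness permits us to replace $f(Y^{(k)})$ by $f(X^{(k)})$ inside this matricial norm, up to an error which one must show vanishes. The resulting quantity $\|P(f(X^{(k)}) \otimes 1, 1 \otimes f(X^{(k)})^t)\|_{M_{n(k)^2}(\C)}$ is precisely the operator norm of $P$ evaluated at the left and right regular representations of $M_{n(k)}$ on $L^2(M_{n(k)}, \tau_{n(k)})$, and passes in the limit to the binormal norm $\|P(y \otimes 1, 1 \otimes y^{op})\|_{Q \otimes_{\mathrm{bin}} Q^{op}}$. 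Hence minimal and binormal $C^*$-norms on $C^*(y) \odot C^*(y)^{op}$ agree, which by Connes' characterization of amenability (where local reflexivity of $C^*(x)$ supplies the needed tensorial flexibility at the $C^*$-level) forces $Q$ to be amenable, a contradiction.

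The principal technical obstacle is precisely the norm-replacement step: passing from $d^{\orb}$-closeness, which is a Hilbert--Schmidt statement, to operator-norm closeness inside the inflated tensor product $M_{n(k)^2}(\C)$. Since $A \mapsto \|A \otimes A^t\|_\infty$ is not $\|\cdot\|_2$-continuous, a naive substitution fails. The anticipated remedy is a careful truncation of the discrepancy $f(Y^{(k)})_i - U^{(k)} f(X^{(k)})_i U^{(k)*}$ into an operator-norm-small piece and a Hilbert--Schmidt-very-small piece, together with polynomial-degree-dependent estimates exploiting uniform operator-norm control on $f(X^{(k)})$ and $f(Y^{(k)})$, and the use of local reflexivity of $C^*(x)$ to identify matricial limits with the appropriate $C^*$-tensor norms.
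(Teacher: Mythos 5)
Your treatment of part (\ref{item:microstates collapse intro}) is essentially the paper's own argument (Theorem \ref{thm:microstates collapse generalization}): the covering bound coming from $h(Q:M)\leq 0$ produces a subexponential $d^{\orb}$-net of the image of the microstate space under $f$, a pigeonhole argument shows some net point captures a non-exponentially-small fraction of the pushforward measure, and exponential concentration (which survives pushforward because $f$ is uniformly $\|\cdot\|_{2}$-continuous and commutes with unitary conjugation) collapses almost all of the mass to an $O(\varepsilon)$-ball around that deterministic point; a diagonalization over $\varepsilon$ and over weak$^{*}$-neighborhoods finishes. That part is fine.

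Part (\ref{item:Pinskers are amenable intro}) has a genuine gap, and it is exactly the step you flag as the "principal technical obstacle." The implication you need --- that $d^{\orb}$-closeness of $f(X^{(k)})$ and $f(Y^{(k)})$ forces closeness of the operator norms $\|P(f(X^{(k)})\otimes 1,1\otimes f(Y^{(k)})^{t})\|_{\infty}$ and $\|P(f(X^{(k)})\otimes 1,1\otimes (U^{(k)}f(X^{(k)})U^{(k)*})^{t})\|_{\infty}$ --- is not merely unproved; under your contradiction hypothesis it is provably false. The closing proposition of Section \ref{s:close} shows that whenever matricial tuples $x_{k}$ converge strongly to a tuple generating a nonamenable algebra, the pair $(x_{k}\otimes 1,1\otimes x_{k}^{t})$ never converges strongly to $(x\otimes 1,1\otimes x^{op})$: the operator $\sum_{j}\mu_{j}|x_{k,j}\otimes 1-1\otimes x_{k,j}^{t}|^{2}$ annihilates the vector $1\in S^{2}(n(k),\tr)$, so $0$ lies in its spectrum for every $k$, while the limiting operator is invertible. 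Hence the "diagonal" quantity $\|P(f(X^{(k)})\otimes 1,1\otimes f(X^{(k)})^{t})\|_{\infty}$ stays a definite distance from the minimal norm that the independent pair converges to, and the substitution error does not vanish. The proposed spectral truncation cannot repair this: writing the discrepancy as an operator-norm-small piece plus a piece supported on a projection of small trace, the latter can still shift the top of the spectrum of a polynomial in the tensor variables by an amount of order one (this is precisely the "outlier" phenomenon that strong convergence is designed to exclude), so no Hilbert--Schmidt smallness gives operator-norm control.

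The paper's Proposition \ref{prop:almost unitary conjugacy} circumvents this by never substituting inside an operator norm. Starting from Haagerup's quantitative witness of nonamenability, $u_{1},\dots,u_{r}\in\mathcal{U}(Qp)$ with $C=\|\frac{1}{r}\sum_{j}u_{j}\otimes\overline{u_{j}}\|_{\infty}<1$, one expands $d^{\orb}(F(A^{(k)}),F(B^{(k)}))^{2}$ and estimates the cross term by Cauchy--Schwarz as $\|P(B^{(k)})\|_{2}\cdot\|\sum_{j}F_{j}(A^{(k)})\,U^{(k)}P(B^{(k)})\,F_{j}(B^{(k)})^{*}\|_{2}$, and the second factor as $\|\sum_{j}F_{j}(A^{(k)})\otimes\overline{F_{j}(B^{(k)})}\|_{\infty}\,\|U^{(k)}P(B^{(k)})\|_{2}$; that is, the operator norm of the tensor is used only to dominate its action on the single vector $U^{(k)}P(B^{(k)})\in S^{2}(n(k),\tr)$. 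All replacements of the $F_{j}$ by fixed polynomials $Q_{j,m}$ --- supplied by local reflexivity with $\|\phi_{m}\|_{cb}\leq 1$, which is what keeps $\|\sum_{j}Q_{j,m}(x)\otimes\overline{Q_{j,m}(x)}\|_{\infty}\leq C'r$ --- then take place inside $\|\cdot\|_{2}$, where they are legitimate, and strong convergence is invoked only for these fixed polynomials. This yields $\liminf_{k}d^{\orb}(F(A^{(k)}),F(B^{(k)}))\geq\sqrt{2\tau(p)(1-C)}>0$, contradicting the collapse from part (\ref{item:microstates collapse intro}). If you wish to keep your min-versus-binormal framing, the lesson is that the comparison must be tested against vectors in $L^{2}$ rather than read off from operator norms of the matricial models.
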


Let us comment a bit on the intuition for (\ref{item:microstates collapse intro}). The assumption that the law of $X^{(k)}$ converges in probability to the law of $x$ means that the randomly chosen matrix $X^{(k)}$ ``simulates" $x$ with high probability. In terminology introduced in Section \ref{S:laws}, we say that $X^{(k)}$ are \emph{microstates} for $x.$
The general properties of Jekel's noncommutative functional calculus then guarantee that the random matrix $f(X^{(k)})$ are also microstates for $f(x)=y.$  The conclusion of (\ref{item:microstates collapse intro}) then asserts that the random microstates for $y$ produced by the random matrix $f(X^{(k)})$ are all approximately unitarily equivalent to each other.
If one passes to the ultraproduct framework, then the picture becomes much clearer. These randomly chosen microstates then turn into (random) honest embeddings into a ultraproduct of matrices, and the conclusion of (\ref{item:microstates collapse intro}) is then the assertion that, with high probability, these different embeddings are all unitarily conjugate when restricted to $Q.$

Viewed through this lens,
part (\ref{item:Pinskers are amenable intro}) is connected with Jung's theorem \cite{JungConj} that a tracial von Neumann algebra which satisfies Connes approximate embeddability property is amenable if and only if any two embeddings into an ultraproduct of matrices are unitarily conjugate (see \cite{ScottSri2019ultraproduct} for a recent generalization of this fact to conjugation by unital, completely positive maps). In fact,  Jung's argument shows the following more general fact: if $(M,\tau)$ is a tracial von Neumann algebra which embeds into an ultraproduct of matrices then given any nonamenable $N\leq M$ there are two embeddings of $M$ into an ultraproduct of matrices which are not unitarily conjugate when restricted to $N.$ A consequence of (\ref{item: Pinskers are amenable}) is that, under the assumption of strong convergence, given a nonamenable $N\leq M$ a \emph{randomly chosen} pair of embeddings of $M$ into an ultraproduct of matrices are not unitarily conjugate when restricted to $N$. We refer the reader to Section \ref{sec: Jung} for a more precise discussion of parts (\ref{item:microstates collapse intro}),(\ref{item:Pinskers are amenable intro}) in an ultraproduct framework.

We close with a discussion of organization of the paper. Section \ref{S:background} is a discussion of background for the paper. In Section \ref{S:notation} we state our conventions and notation from von Neumann algebra and operator space theory. In Section \ref{S:laws} we recall the notion of noncommutative laws, and their use in defining $1$-bounded entropy. Here we also recall the definitions of the weak$^{*}$ and strong topologies on the space of laws. In Section \ref{S:measures}, we discuss (sequences of) measures on microstates spaces and the two important conditions on them we will use: being asymptotically supported on microstates spaces, and exponential concentration. Section \ref{S:nc func calc} describes Jekel's noncommutative functional calculus, as well as the modification we will need for the non-self-adjoint case. Strictly speaking, the usage of this general functional calculus is not necessary for the proofs of the main results and earlier versions of this paper did not use it. However, its usage drastically simplifies both the conception and the deduction of the main results and so we think its inclusion is worthwhile. Section \ref{S:main theorems} contains the proofs of Theorems \ref{T:main},\ref{T:more general main theorem intro}. Specifically, in Section \ref{subsec:microstates collapse} we prove Theorem \ref{T:more general main theorem intro} (\ref{item:microstates collapse intro}) which states that under an assumption of exponential concentration there is a ``collapse" of the microstates space where the vast majority of the measure lives near a single unitary conjugation orbit. The methods of proof here are similar to those in \cite{FreePinsker}. Section \ref{sub sec: proof of Pinsker are amenable intro} contains a proof of Theorem \ref{T:more general main theorem intro} (\ref{item:Pinskers are amenable intro}), and it is here that both strong convergence and local reflexivity play a crucial role. In Section \ref{subsec: proof of main theorem} we deduce Theorem \ref{T:main} from Theorem \ref{T:more general main theorem intro}. In Section \ref{sec: Jung} we explain how the results are related to Jung's theorem, and give reformulations of the main results in an ultraproduct framework. In Section \ref{sec: Jung} we also introduce several conjectures related to the Peterson-Thom conjecture and Theorem \ref{T:main}, and we explicitly explore their relative strength.  Finally, we close in Section \ref{s:close} with a few comments on the approach. In particular, we discuss the discontinuity in the strong topology of taking tensors, and how exactness of free group factors may allow one to follow previous approaches to proving strong convergence in probability.

\textbf{Acknowledgements}
 The initial stages of this work were carried out at the Hausdorff Research Institute for Mathematics during the 2016 trimester program ``Von Neumann Algebras." I thank the Hausdorff institute for their hospitality. Conversations during the ``Quantitative Linear Algebra" program at the Institute of Pure and Applied Mathematics at the University of California, Los Angeles were also insightful. I thank IPAM for its hospitality. I would like to thank Roy Araiza, Benoit Collins, Yoann Dabrowski, David Jekel, and Thomas Sinclair for inspirational conversations related to this work. I thank the anonymous referee for their numerous comments, which greatly improved the paper.

\section{Background}\label{S:background}

\subsection{General convention and notation}\label{S:notation}
For $k\in \N,$ we let $M_{k}(\C)$ be the space of $k\times k$ matrices over $\C,$ and $M_{k}(\C)_{s.a.}$ be the space of $k\times k$ \emph{self-adjoint} matrices over $\C.$ We also use $\mathcal{U}(k)$ for the unitaries in $M_{k}(\C).$ We define
$\tr\colon M_{k}(\C)\to \C$
by
\[\tr(A)=\frac{1}{k}\sum_{j=1}^{k}A_{jj}.\]
We define a Hilbert space inner product on $M_{k}(\C)$ by $\ip{A,B}=\tr(B^{*}A),$ and we let $\|\cdot\|_{2}$ be the norm induced by this inner product. We use $S^{2}(n,\tr)$ for $M_{n}(\C)$ equipped with this Hilbertian structure. For an index set $J,$ a finite $F\subseteq J,$ and $A\in M_{k}(\C)^{J}$ we set
\[\|A\|_{2,F}=\left(\sum_{j\in F}\|A_{j}\|_{2}^{2}\right)^{1/2}.\]
If $J$ itself is finite, and $F=J$ we will often use $\|\cdot\|_{2}$ instead of $\|\cdot\|_{J}$.
The pair $(M_{k}(\C),\tr)$ is an important example of a more general concept.
\begin{defn}
 A
\emph{tracial von Neumann} algebra is a pair $(M,\tau)$ where $M$ is a von Neumann algebra, and $\tau\colon M\to \C$ is a faithful, normal, tracial state.
\end{defn}
For a von Neumann algebra $M$ we use $M_{*}$ for the normal linear functionals $M\to \C.$ We call $M_{*}$ the \emph{predual of $M$}, it is a Banach space under the operator norm.
For von Neumann algebras, we will adopt similar conventions as in the case of matrices. For example, $\mathcal{U}(M),M_{s.a.}$ will refer to the unitaries and self-adjoints in $M.$ For reasons that will become clear shortly, for $x\in M$ we use $\|x\|_{\infty}$ for the operator norm of $x.$  Given a von Neumann algebra $M$ we shall use $N\leq M$ to mean that $N$ is a unital von Neumann subalgebra of $M.$ If $M$ is a von Neumann algebra, $J$ an index set and $x=(x_{j})_{j\in J}\in M^{J},$ and $y\in M,$ we will use $yx,xy$ for $(yx_{j})_{j\in J}$, $(x_{j}y)_{j\in J}\in M^{J},$ respectively. Similarly, if $M_{1},M_{2}$ are von Neumann algebras and $\pi\colon M_{1}\to M_{2}$ is a $*$-homomorphism, then for an index set $J$ and $x=(x_{j})_{j\in J}\in M_{1}^{J}$ we will use $\pi(x)$ for $(\pi(x_{j}))_{j\in j}\in M_{2}^{J}.$

While a von Neumann algebra is assumed to come with an ambient embedding into bounded operators on a Hilbert space $\mathcal{H}$, one significant advantage of a \emph{tracial} von Neumann algebra is that there is a natural representation of the algebra we can build from the trace. Given a tracial von Neumann algebra $(M,\tau),$ we define an inner product on $M$ by
\[\ip{a,b}=\tau(b^{*}a).\]
We use $\|\cdot\|_{2}$ for the norm induced by this inner product, and we let $L^{2}(M,\tau)$ be the Hilbert space which is the completion under this inner product. It is direct to show (see \cite[Section 7.1.1]{anantharaman-popa}) that for all $x,y\in M$
\[\|xy\|_{2}\leq \|x\|_{\infty}\|y\|_{2},\,\,\,\,\,\,\|xy\|_{2}\leq \|y\|_{\infty}\|x\|_{2}.\]
Thus the operators $y\mapsto xy,$ $y\mapsto yx$ extend continuously to bounded operators on $L^{2}(M,\tau)$. Moreover, if $M$ is given as a von Neumann algebra of operators on  a Hilbert space $\mathcal{H}$, then the above inequality proves that the norms of the operators $y\mapsto xy,$ $y\mapsto yx$ acting on $L^{2}(M,\tau)$ are equal to the norm of $x$ as an element of $B(\mathcal{H}).$ For $\xi\in L^{2}(M,\tau),$ we use $x\xi,$ $\xi x$ for the image of $\xi$ under these operators. Given a tracial von Neumann algebra $(M,\tau)$ the above allows us to view it as a von Neumann algebra of operators on $L^{2}(M,\tau)$ by left multiplication. We will essentially always view a tracial von Neumann algebra in this manner and ignore whatever other Hilbert space it arises from.

We will need to use tensor products at various points in the paper. For vector spaces $V,W$ we use $V\otimes_{\textnormal{alg}}W$ for their \emph{algebraic} (i.e. not completed)\emph{ tensor product}.
If $\mathcal{H}_{1},$ $\mathcal{H}_{2}$ are Hilbert spaces, we let $\mathcal{H}_{1}\otimes \mathcal{H}_{2}$ denote their Hilbert space tensor product. Given $T\in B(\mathcal{H}_{1})$, $S\in B(\mathcal{H}_{2})$, we let $T\otimes S$ be the unique operator in $B(\mathcal{H}_{1}\otimes \mathcal{H}_{2})$ given by
\[(T\otimes S)(\xi\otimes \eta)=T\xi\otimes S\eta\mbox{ for $\xi\in \mathcal{H}_{1},$ $\eta\in \mathcal{H}_{2}$.}\]
For von Neumann algebras $M_{j}\subseteq B(\mathcal{H}_{j}),j=1,2$ we let
\[M_{1}\overline{\otimes}M_{2}=\overline{\Span\{T\otimes S:T\in M_{1},S\in M_{2}\}}^{SOT}.\]

At various important points in the paper, we will need to use approximation properties in terms of \emph{completely bounded/completely positive maps}. These are the appropriate morphisms for what are now called operator spaces/operator systems.
\begin{defn}
A \emph{(concrete) operator space} is a closed, linear subspace of $B(\mathcal{H})$ for some Hilbert space $\mathcal{H}.$ A \emph{(concrete) operator system} is a closed, linear subspace of $B(\mathcal{H})$ which is closed under adjoints and contains the identity operator.
\end{defn}

If $E\subseteq B(\mathcal{H})$ is an operator space, then we may view $M_{n}(E)\subseteq B(\mathcal{H}^{\oplus n})$ in a natural way. So if we are given $A\in M_{n}(E)$ then the embedding $M_{n}(E)\subseteq B(\mathcal{H}^{\oplus n})$ allows us to make sense of $\|A\|_{M_{n}(E)}.$
Properly speaking, an operator space is really a Banach space $E$ together with the data of these norms on $M_{n}(E)$ and one can give an axiomatic description for such norms to arise from an embedding into $B(\mathcal{H})$ (see \cite[Theorem 2.3.5]{EffrosRuan}). We will stick to concrete operator spaces (i.e. given as a subspace of $B(\mathcal{H})$) for the purposes of this paper.
Given operator spaces $E,F$ and a bounded, linear map $T\colon E\to F$ we define for $n\in \N,$ $T\otimes \id_{M_{n}(\C)}\colon M_{n}(E)\to M_{n}(F)$ by $[(T\otimes \id_{M_{n}(\C)})(A)]_{ij}=T(A_{ij})$ for $A\in M_{n}(E).$ We say that $T$ is \emph{completely bounded} if
\[\sup_{n}\|T\otimes \id_{M_{n}(\C)}\|<\infty,\]
the norm in question being the operator norm. If $T$ is completely bounded, we set
\[\|T\|_{cb}=\sup_{n}\|T\otimes \id_{M_{n}(\C)}\|.\]
We say that $T$ is \emph{completely contractive} if $\|T\|_{cb}\leq 1.$
We let $CB(E,F)$ be the completely bounded maps $E\to F$ and will often use $CB(E)$ instead of $CB(E,E).$ If $E_{1},E_{2}$ are operator spaces and $E_{j}\subseteq B(\mathcal{H}_{j}),j=1,2,$ then we let $E_{1}\otimes_{\min{}}E_{2}$ be the operator space given by
\[\overline{\Span\{A\otimes B:A\in E_{1},B\in E_{2}\}}^{\|\cdot\|_{\infty}}\subseteq B(\mathcal{H}_{1}\otimes \mathcal{H}_{2}).\]
If $E_{j},F_{j},j=1,2$ are operator spaces and $T_{j}\colon E_{j}\to F_{j},j=1,2$ are completely bounded, then the map $T_{1}\otimes T_{2}\colon E_{1}\otimes_{\textnormal{alg}}E_{2}\to F_{1}\otimes_{\textnormal{alg}}F_{2}$ extends continuously to a completely bounded map $E_{1}\otimes_{\min{}}E_{2}\to F_{1}\otimes_{\min{}}F_{2}$ which we still denote $T_{1}\otimes T_{2}.$ We also have
\[\|T_{1}\otimes T_{2}\|_{cb}=\|T_{1}\|_{cb}\|T_{2}\|_{cb}.\]
This is decidedly not true if we consider \emph{bounded} maps instead of \emph{completely bounded} maps, and indeed arguably the main motivation for completely bounded maps and operator spaces is to provide a context in which one can extend bounded maps to tensor products.

For operator systems there is a natural order structure at play. Suppose $E\subseteq B(\mathcal{H})$ is an operator system, and consider  the embeddings $M_{n}(E)\subseteq B(\mathcal{H}^{\oplus n}).$ We can then define the positive elements in $M_{n}(E)$ to be those which are positive as operators on $B(\mathcal{H}^{\oplus n}).$ Since $E$ is an operator system, it has an abundance of positive elements, e.g. every element of $E$ is a linear combination of $4$ positive elements. Given operator systems $E,F$ a map $T\colon E\to F$ is \emph{positive} if $T(x)\geq 0$ for all $x\in E$ with $x\geq 0.$ It is \emph{completely positive} if $T\otimes 1_{M_{n}(\C)}$ is positive for all $n.$  We say $T$ is \emph{unital} if $T(1)=1.$ We use $CP(E,F)$ and $UCP(E,F)$ for the completely positive and unital, completely positive maps $E\to F$ respectively. It is a fact that for $T\in CP(E,F)$ we have $\|T\|_{cb}=\|T(1)\|$ (see \cite[Lemma 5.1.1]{EffrosRuan}).  As in the operator space case, if $E_{j},F_{j},j=1,2$ are operator systems and $T_{j}\colon E_{j}\to F_{j},j=1,2$ are completely positive, then so is $T_{1}\otimes T_{2}\colon E_{1}\otimes_{\min{}}E_{2}\to F_{1}\otimes_{\min{}}F_{2}.$ As in the operator space case, the analogous statement is \emph{false} for \emph{positive maps}.

Since any $C^{*}$-algebra can be  embedded in bounded operators on a Hilbert space, we may view any closed subspace of a $C^{*}$-algebra as an operator space. Similarly, we may view any closed subspace which is closed under adjoints and contains the unit as an operator system.


If $M_{j},j=1,2,$ $N_{j},j=1,2$ are von Neumann algebras and $T_{j}\colon M_{j}\to N_{j},j=1,2$ are normal, completely bounded maps then $T_{1}\otimes T_{2}$ has a unique, normal extension to a map $M_{1}\overline{\otimes}M_{2}\to N_{1}\overline{\otimes}N_{2}$ which we still denote $T_{1}\otimes T_{2}.$ Moreover,
\[\|T_{1}\otimes T_{2}\|_{CB(M_{1}\overline{\otimes}M_{2},N_{1}\overline{\otimes}N_{2})}=\|T_{1}\|_{cb}\|T_{2}\|_{cb}.\]
Further, if each $T_{j}$ is completely positive, then so is $T_{1}\otimes T_{2}.$

\subsection{Laws, Microstates, and $1$-Bounded Entropy}\label{S:laws}
Given an index set $J,$ we let
$\C^{*}\ip{(T_{j})_{j\in J}}$
be the $*$-algebra of noncommutative $*$-polynomials in the abstract variables $(T_{j})_{j\in J}.$ We may think of $\C^{*}\ip{(T_{j})_{j\in J}}$ as the (algebraically) free $*$-algebra indexed by $J.$ If $J=\{1,\cdots,n\},$ we typically use
$\C^{*}\ip{T_{1},\cdots,T_{n}}$
for $\C^{*}\ip{(T_{j})_{j=1}^{n}}.$
If we are given a $*$-algebra $A,$ and a tuple $x\in A^{J},$ then by algebraic freeness there is a unique $*$-homomorphism $\ev_{x}\colon \C^ {*}\ip{(T_{j})_{j\in J}}\to A$ such that $\ev_{x}(T_{j})=x_{j}.$ For $P\in \C^ {*}\ip{(T_{j})_{j\in J}},$ we denote $\ev_{x}(P)$ by $P((x_{j})_{j\in J}).$ Again, if $J=\{1,\cdots,n\},$ we usually use $P(T_{1},\cdots,T_{n}).$

\begin{defn}\label{defn:laws}
Let $J$ be an index set. A linear functional $\ell\colon \C^{*}\ip{(T_{j})_{j\in J}}\to \C$ is called a \emph{tracial law} if there is a $R\colon J\to [0,\infty)$ so that
\begin{itemize}
    \item $\ell(P^{*}P)\geq 0$ for all $P\in\C^{*}\ip{(T_{j})_{j\in J}},$
    \item $\ell(1)=1,$
    \item $\ell(PQ)=\ell(QP)$ for all $P,Q\in \C^{*}\ip{(T_{j})_{j\in J}}.$
    \item for all $n\in \N,$ all $j_{1},j_{2},\cdots,j_{n}\in J$ and all $\sigma_{1},\cdots,\sigma_{n}\in \{1,*\},$
    \[|\ell(T_{j_{1}}^{\sigma_{1}}T_{j_{2}}^{\sigma_{2}}\cdots T_{j_{n}}^{\sigma_{n}})|\leq R_{j_{1}}R_{j_{2}}R_{j_{3}}\cdots R_{j_{n}}.\]
\end{itemize}
We let $\Sigma_{J}$ be the space of tracial laws indexed by $J.$ If $J=\{1,\cdots,n\},$ we typically use $\Sigma_{n}$ instead of $\Sigma_{J}.$ Given a function $R\colon J\to [0,\infty),$ we let $\Sigma_{R,J}$ be the set of all laws $\ell$ satisfying the fourth item above for this specific $R.$ If $R\in [0,\infty)$ we will frequently use $\Sigma_{R,J}$ for $\Sigma_{\widehat{R},J}$ where $\widehat{R}\colon J\to [0,\infty)$ is the function which is constantly $R.$ As above, if $J=\{1,\cdots,n\}$ we will frequently use $\Sigma_{R,n}$ (in both the case that $R$ is a function and the case that it is a constant).
\end{defn}

The above may be regarded as an abstract definition of a law. If we are concretely given a tracial von Neumann algebra $(M,\tau)$ and a tuple $x\in M^{J}$ for some indexing set $J,$ we define the \emph{law of $x$} to be the linear functional
\[\ell_{x}\colon \C^{*}\ip{(T_{j})_{j\in J}}\to \C\]
given by $\ell_{x}(P)=\tau(P((x_{j})_{j\in J})).$ We always equip $M_{k}(\C)$ with its unique tracial state $\tr$ given by
\[\tr(A)=\frac{1}{k}\sum_{j=1}^{k}A_{jj}.\]
So if $A\in M_{k}(\C)^{J},$ we have a notion of its law $\ell_{A}.$

In fact, every \emph{abstract} law arises as a \emph{concrete} law for some tuple in a tracial von Neumann algebra. This follows from the GNS (Gelfand-Naimark-Segal) construction, which we sketch here. Let $J$ be an index set and $\ell\in \Sigma_{J}.$ Define a semi-inner product on $\C^{*}\ip{(T_{j})_{j\in J}}$ by
\[\ip{P,Q}=\ell(Q^{*}P).\]
For $P\in \C^{*}\ip{(T_{j})_{j\in J}}$ we set
\[\|P\|_{L^{2}(\ell)}=\ell(P^{*}P)^{1/2},\]
and we define
\[W=\{P\in \C^{*}\ip{(T_{j})_{j\in J}}:\|P\|_{L^{2}(\ell)}=0\},\mbox{ and }
V=\C^{*}\ip{(T_{j})_{j\in J}}/W.\]
The semi-inner product $\C^{*}\ip{(T_{j})_{j\in J}}$ descends to a genuine inner product on $V,$ and we let $L^{2}(\ell)$ be the Hilbert space which is the completion of $V$ under the norm coming from this inner product. From the fourth bullet point in Definition \ref{defn:laws}, one can deduce that there is a $C\colon \C^{*}\ip{(T_{j})_{j\in J}}\to [0,\infty]$ so that
\[\|PQ\|_{L^{2}(\ell)}\leq C(P)\|Q\|_{L^{2}(\ell)}\]
for all $P,Q\in \C^{*}\ip{(T_{j})_{j\in J}}.$
So we may proceed as in  the tracial von Neumann algebra case to deduce that there is a well-defined $*$-homomorphism
$\pi_{\ell}\colon \C^{*}\ip{(T_{j})_{j\in J}}\to B(L^{2}(\ell))$
satisfying
\begin{equation}\label{eqn:GNS rep}
  \pi_{\ell}(P)(Q+W)=PQ+W
\end{equation}
for all $P,Q\in \C^{*}\ip{(T_{j})_{j\in J}}.$ Set \begin{equation}\label{eqn:vNa of a law}
    W^{*}(\ell)=\overline{\pi_{\ell}(\C^{*}\ip{(T_{j})_{j\in J}})}^{SOT},
\end{equation} and let $x=(\pi_{\ell}(T_{j}))_{j\in J}\in M^{J}.$ We then have a faithful, normal, tracial state $\tau_{\ell}\colon M\to \C$ given by
\begin{equation}\label{eqn:GNS trace}
\tau_{\ell}(a)=\ip{a(1+W),1+W}, \mbox{ for $a\in M$}
\end{equation}
and by construction the law of $x$ with respect to $\tau_{\ell}$ is $\ell.$ It is an exercise using the spectral theorem to show that
\begin{equation}\label{eqn: recover infty norm}
  \|\pi_{\ell}(P)\|_{\infty}=\sup_{k}\ell((P^{*}P)^{k})^{1/2k}=\lim_{k\to\infty}\ell((P^{*}P)^{k})^{1/2k}.
\end{equation}
for all $P\in \C^{*}\ip{(T_{j})_{j\in J}}.$ So if
$R\in [0,\infty)^{J}$ and $\ell\in \Sigma_{R,J}$, then $\|\pi_{\ell}(x_{j})\|_{\infty}\leq R_{j}$ for all $j\in J.$

Laws may be viewed as a natural noncommutative extension of probability measures.
If $(M,\tau)$ is a tracial von Neumann algebra and $x\in M$ is \emph{normal}, we let $\mu_{x}\in \Prob(\C)$ be the spectral measure of $x$ defined by $\mu_{x}(E)=\tau(1_{E}(x))$ for all Borel $E\subseteq C.$ Then, by definition, for all $P\in \C^{*}\ip{T}$ we have
\begin{equation}\label{E:spectral measure defn}
\tau(P(x))=\int P(z)\,d\mu_{x}(z).
\end{equation}
Here we are using $P(z)$ for the image of $T$ under the unique $*$-homomorphism $\C^{*}\ip{T}\to \C$ given by $T\mapsto z.$ Of course, $\C^{*}\ip{T}$ is noncommutative, whereas $z\mapsto P(z)$ is given by a (different) \emph{commutative} polynomial in $z$ and $\overline{z}.$ Since $\mu_{x}$ is compactly supported, the Stone-Weierstrass theorem tells us that equation (\ref{E:spectral measure defn}) \emph{uniquely} determines $\mu_{x}.$ This equation may be read as
\[\ell_{x}(P)=\int P(z)\,d\mu_{x}(z)\]
and so we see that the law of $x$ encodes the same information as the spectral measure of $x.$

Fix a set $J.$ Since $\Sigma_{J}$ is a subset of the algebraic dual of $\C^{*}\ip{(T_{j})_{j\in J}}$ it can be naturally endowed with the weak$^{*}$-topology. So a basic neighborhood of $\ell\in \Sigma_{J}$ is given by
\[U_{F,\varepsilon}(l) =\bigcap_{Q\in F}\{\phi \in \Sigma_{J}:|\phi(Q)-\ell(Q)|<\varepsilon\}\]
for a finite $F\subseteq \C^{*}\ip{(T_{j})_{j\in J}}$ and an $\varepsilon>0.$
We leave it as an exercise to verify that for every $R\in [0,\infty)^{J}$ we have that $\Sigma_{R,J}$ is compact in the weak$^{*}$-topology.

We recall the  notion of \emph{freely independent random variables}, which forms the basis for Voiculescu's free probability. Let $(M,\tau)$ be a tracial von Neumann algebra, and let $(A_{j})_{j\in J}$ be $*$-subalgebras of $M.$ We say that $(A_{j})_{j\in J}$ are \emph{freely independent} (or \emph{free}) if for all $n\in \N,$ and all $j\in J^{n}$ with $j_{1}\ne j_{2}$, $j_{2}\ne j_{3}$, $j_{3}\ne j_{4}$, $\cdots$, $j_{n-1}\ne j_{n},$ and for all $a\in M^{n}$ with $a_{i}\in A_{j_{i}}$ and $\tau(a_{i})=0$ we have
\[\tau(a_{1}a_{2}\cdots a_{n})=0.\]
Say that $(x_{j})_{j\in J}\in M^{J}$ are \emph{freely independent} (or \emph{free}) if the $*$-algebras they generate are free as a $J$-tuple. This necessarily forces $(W^{*}(x_{j}))_{j\in J}$ to be free. Given any collection $(M_{j},\tau_{j})_{j\in J},$ one may find (see \cite[Chapter 1]{VoiculescuDykemaNica}) another tracial von Neumann algebra $(M,\tau)$ for which there are trace-preserving embeddings $M_{j}\hookrightarrow M$ so that if we identify $M_{j}$ with its image under this embedding, then $M=W^{*}\left(\bigcup_{j}M_{j}\right),$ and $(M_{j})_{j\in J}$ are free. If $(\widetilde{M},\widetilde{\tau})$ is another such algebra, then there is a unique trace-preserving isomorphism $(M,\tau)\cong (\widetilde{M},\widetilde{\tau})$ which respects the embeddings of $(M_{j},\tau_{j})$ into $(M,\tau),$ $(\widetilde{M},\widetilde{\tau})$ for each $j\in J.$ So we may define the free product of $(M_{j},\tau_{j})$, denoted $\ast_{j\in J}(M_{j},\tau_{j}),$ to be any such algebra. Given index sets $(J_{i})_{i\in I},$ and $\ell_{i}\in \Sigma_{J_{i}}$ define $\ell=\ast_{i\in I}\ell_{i}\in \Sigma_{\sqcup_{i}J_{i}}$ to be the law of $x=(\pi_{\ell_{i}}(T_{j}))_{j\in J_{i},i\in I}$ in $\ast_{i\in I}(W^{*}(\ell_{i}),\tau_{\ell_{i}}).$ By its very nature, freeness of noncommutative variables depends only upon their joint law. So we will often omit reference to the underlying von Neumann algebra. For example, we will often say ``suppose $x=(x_{1},\cdots,x_{r})$ is a free tuple". Provided we specify $\ell_{x_{j}}$ for all $j,$ this unambiguously gives $\ell_{x}.$ Since many of our results only require knowledge of the law of $x,$ this will suffice for our purposes. One case of utmost importance is the following. A tuple $s=(s_{1},\cdots,s_{r})$ is a \emph{free semicircular family} if it is a free family, each $s_{j}$ is self-adjoint, and for each $j$ we have that
\[d\mu_{s_{j}}=\frac{1}{2\pi\sigma^{2}}\sqrt{4\sigma^{2}-(x-\mu)^{2}}1_{[\mu-2\sigma,\mu+2\sigma]}\,dx\]
for some $\mu\in \R,$ $\sigma\in (0,\infty).$

\begin{defn}
Let $J$ be an index set, and fix $R\colon [0,\infty)\to J.$ Given a set $\mathcal{O}\subseteq \Sigma_{R,J}$ with nonempty interior (relative to  $\Sigma_{R,J}$) and an $k\in \N,$ we define \emph{Voiculescu's space of $(\mathcal{O},k)$ microstates} to be
\[\Gamma^{(k)}_{R}(\mathcal{O})=\{A\in M_{k}(\C)^{J}:\ell_{A}\in \mathcal{O}, \|A_{j}\|_{\infty}\leq R_{j}\mbox{ for all $j\in J$}\}.\]
\end{defn}

The reader may be more familiar with the following case. Let $(M,\tau)$ be a tracial von Neumann algebra, and let $J$ be an index set. Let $x\in M^{J}$ and choose $R\in [0,\infty)^{J}$ with $\|x_{j}\|< R_{j}$ for all $j\in J.$ Then it is typical to consider $\Gamma^{(k)}_{R}(\mathcal{O})$ for $\mathcal{O}$ a weak$^{*}$-neighborhood of $x.$ Indeed, it is common to denote this by $\Gamma^{(k)}_{R}(x;\mathcal{O})$ even though it does not require $x$ for its definition. If $W^{*}(x)=M,$ then we have that $M$ embeds into an ultrapower of the hyperfinite $\textrm{II}_{1}$-factor if and only if for every neighborhood $\mathcal{O}$ of the law of $x$ in $\Sigma_{R,J}$ there is an integer $k\in \N$ so that
\[\Gamma^{(k)}_{R}(\mathcal{O})\ne \varnothing.\]
Because of this, the spaces $\Gamma^{(k)}_{R}(\mathcal{O})$ are often regarded as spaces of ``finitary approximations" of $x,$ and they form the basis for microstates free entropy, microstates free entropy dimension, and the $1$-bounded entropy of $x.$

There is a mild, but very important, variant of this which takes into account microstates which have an ``extension" to a larger algebra. Suppose that $(M,\tau)$ is a tracial von Neumann algebra, and that $I,J$ are index sets. Let $y\in M^{I},$  and fix $R\in [0,\infty)^{I\sqcup J}$ with $\|y_{i}\|_{\infty}\leq R_{i}$ for all $i\in I.$ Let $\mathcal{O}\subseteq \Sigma_{R,I\sqcup J}$ and assume that
\[\{\ell\big|_{\C^{*}\ip{(T_{i})_{i\in I}}}:\ell \in \mathcal{O}\}\]
is a neighborhood of $\ell_{y}.$ For an integer $k\in \N$ we define \emph{Voiculescu's microstates space for $y$ in the presence of $\mathcal{O}$}, denoted $\Gamma_{R}^{(k)}(y:\mathcal{O}),$ by
\[\Gamma_{R}^{(k)}(y:\mathcal{O})=\{A\in M_{k}(\C)^{I}:\mbox{ there exists a $B\in M_{k}(\C)^{J}$ with $\ell_{A,B}\in \mathcal{O}$}\}.\]
Typically, one takes $y\in M_{k}(\C)^{J}$ and $\mathcal{O}$ to be a neighborhood of $\ell_{y,x}.$ In this case, one thinks of $\Gamma_{R}^{(k)}(y:\mathcal{O})$ as ``microstates for $y$ which have an extension to microstates for $(y,x)$".

Given a set $\Omega,$ a \emph{pseudometric} on $\Omega$ is a function $d\colon \Omega\times \Omega\to [0,\infty)$ satisfying
\begin{itemize}
    \item $d(x,y)=d(y,x)$ for all $x,y\in \Omega$,
    \item $d(x,z)\leq d(x,y)+d(y,z)$ for all $x,y,z\in \Omega.$
\end{itemize}
Given a pseudometric $d$ on $\Omega,$ an $r>0,$ and an $x\in \Omega,$ we let
\[B_{r}(x,d)=\{y\in \Omega:d(x,y)<r\}.\]
For $E\subseteq X$ and $r>0,$ we let
\[N_{r}(E,d)=\bigcup_{x\in E}B_{r}(x,d).\]
We call $N_{r}(E,d)$ the $r$-neighborhood of $E.$
For $\varespilon>0$ and $E\subseteq \Omega,$ we let $K_{\varepsilon}(E,d)$ be the minimal cardinality of a set $F\subseteq E$ which has $N_{\varpesilon}(F,d)\supseteq E.$ If there is no such finite set $F,$ then by convention $N_{\varepsilon}(F,d)=\infty.$  If $F\subseteq
\C,$ then $N_{\varepsilon}(F)$ will refer to the $\varepsilon$-neighborhood of $F$ with respect to the Euclidean distance on $\C$ and we will not make reference to the fact that we are using the Euclidean distance.

The most important pseudometric for our purposes is the following. Given an index set $J,$ a  finite set $F\subseteq J,$ and a natural number $k,$ we define a pseudometric $d^{\orb{}}_{F}$ on $M_{k}(\C)^{J}$ by
\[d^{\orb{}}_{F}(A,B)=\inf_{U\in U(k)}\|A-UBU^{*}\|_{2,F}.\]
As in the case of $\|\cdot\|_{2,F}$ if $J$ itself is finite we will usually use $d^{\orb{}}$ instead of $d^{\orb{}}_{J}$.

\begin{defn}\label{defn:S1B}
Let $(M,\tau)$ be a tracial von Neumann algebra, and $y\in M^{I},x\in M^{J}.$ Fix $R\in [0,\infty)^{I\cup J}$ with $\|x_{j}\|_{\infty}\leq R_{j}$ for all $j\in J,$ and $\|y_{i}\|_{\infty}\leq R_{i}$ for all $i\in I.$ For a weak$^{*}$-neighborhood $\mathcal{O}$ of $\ell_{y,x}$ and a finite $F\subseteq I,$ we set
\[K_{\varepsilon,F}^{\orb{}}(y:\mathcal{O},\|\cdot\|_{2})=\limsup_{k\to\infty}\frac{1}{k^{2}}\log K_{\varepsilon}(\Gamma_{R}^{(k)}(y:\mathcal{O}),d^{\orb{}}_{F}). \]
We then define
\[K_{\varepsilon,F}^{\orb{}}(y:x)=\inf_{\mathcal{O}}K_{\varepsilon,F}^{\orb{}}(y:\mathcal{O},\|\cdot\|_{2}),\]
\[h(y:x)=\sup_{\varepsilon, F}K_{\varepsilon,F}^{\orb{}}(y:x),\]
where the infimum is over all weak$^{*}$-neighborhoods $\mathcal{O}$ of $\ell_{y,x}$ and the supremum is over all $\varepsilon>0$ and finite subsets $F$ of $J.$  We call $h(y:x)$ the \emph{1-bounded entropy of $y$ in the presence of $x$}.
\end{defn}
It follows from \cite[Theorem A.9]{Me8} that if $J',I'$ are other index sets, and if $y'\in M^{J'},x'\in M^{I'},$ and  $W^{*}(y)=W^{*}(y'),W^{*}(x,y)=W^{*}(x',y'),$ then
\[h(y:x)=h(y':x').\]
Suppose $N\leq M,$ and that $N$ is diffuse. If $y\in N^{J},x\in M^{I}$ with $W^{*}(y)=N,W^{*}(x,y)=M,$ we may define the \emph{1-bounded entropy of $N$ in the presence of $M$} by
\[h(N:M)=h(y:x).\]
We think of $h(N:M)$ as some precise measurement of the ``size of the space of microstates for $N$ which have an extension to $M$".
Note that since we allow arbitrary index sets, the quantity $h(N:M)$ is \emph{always} defined (provided $N$ is diffuse), though it may be $-\infty.$ We set $h(M)=h(M:M)$, and call $h(M)$ the \emph{$1$-bounded entropy of $M$}.

We now turn to permanence properties the $1$-bounded entropy enjoys. We say that a von Neumann algebra $M$ is \emph{hyperfinite} if there is an increasing net $(M_{\alpha})_{\alpha}$ of finite-dimensional von Neumann subalgebras of $M$ with
\[M=\overline{\bigcup_{\alpha}M_{\alpha}}^{WOT}.\]
By a celebrated result of Connes' \cite{Connes}, this is equivalent to several other properties of $M$ such as being amenable  (\cite[Chapter 10]{anantharaman-popa}).  Because of Connes' famous and deep work we will use hyperfinite and amenable interchangeably. We use $\cR$ for the (unique modulo isomorphism) hyperfinite $\textrm{II}_{1}$-factor.
 For a tracial von Neumann algebra $(M,\tau)$ and $x\in M^{J}$ for some set $J,$ we let $\delta_{0}(x)$ be the microstates free entropy dimension of $x.$ We will not need the precise definition, and refer the reader to \cite[Definition 6.1]{FreeEntropyDimensionII} for the details.  We assume that all our von Neumann algebras are diffuse for all the properties listed below.

\begin{list}{{\bf P\arabic{pcounter}:} ~ }{\usecounter{pcounter}}

\item $h(N:M)\geq 0$ if $N\leq M$ and every von Neumann subalgebra of $M$ with separable predual embeds into an ultrapower of $\cR,$ and $h(N:M)=-\infty$ if there exists a von Neumann subalgebra of $M$ with separable predual which does not embed into an ultrapower of $\cR$. (Exercise from the definitions.)
\item $h(N_{1}:M_{1})\leq h(N_{2}:M_{2})$ if $N_{1}\leq N_{2}\leq M_{2}\leq M_{1}$. \label{I:monotonicity of 1 bounded entropy} (Exercise from the definitions.)

\item $h(N:M)=0$ if $N\leq M$ and $N$ is diffuse and hyperfinite. \label{I:hyperfinite has 1-bdd ent zero}
(Exercise from the definitions).

\item For $M$ diffuse, $h(M)<\infty$ if and only if $M$ is strongly $1$-bounded in the sense of Jung. (See \cite[Proposition A.16]{Me8}). \label{I:preserves SB}

\item $h(M)=\infty$ if $M = \mathrm{W}^*(x_{1},\cdots,x_{n})$ where $x_{j}\in M_{sa}$ for all $1\leq j\leq n$ and $\delta_{0}(x_{1},\cdots,x_{n})>1$. For example, this applies if $M=L(\F_{n}),$ for $n>1$. (This follows from Property \ref{I:preserves SB} and \cite[Corollary 3.5]{JungSB}).

\item $h(N_{1}\vee N_{2}:M)\leq h(N_{1}:M)+h(N_{2}:M)$ if $N_{1},N_{2}\leq M$ and $N_{1}\cap N_{2}$ is diffuse. (See \cite[Lemma A.12]{Me8} \label{I:subadditivity of 1 bdd ent}).

\item Suppose that $(N_{\alpha})_{\alpha}$ is an increasing chain of diffuse von Neumann subalgebras of a von Neumann algebra $M$. Then
\[h\left(\bigvee_{\alpha}N_{\alpha}:M\right)=\sup_{\alpha}h(N_{\alpha}:M).\]
\label{I:increasing limits of 1bdd ent first variable} (See \cite[Lemma A.10]{Me8}).

\item $h(N:M)=h(N:M^{\omega})$ if $N\leq M$ is diffuse, and $\omega$ is a free ultrafilter on an infinite set. (See \cite[Proposition 4.5]{Me8}). \label{I:omegafying in the second variable}

\item $h(\mathrm{W}^*(\cN_{M}(N)):M)=h(N:M)$ if $N\leq M$ is diffuse.  Here $\cN_{M}(N)=\{u\in \mathcal{U}(M):uN u^{*}=N\}.$ (This is a special case of \cite[Theorem 3.8]{Me8}). \label{I:passing to normalizers preserves entropy}

\end{list}
These properties are sufficient by themselves to deduce  the landmark results of Voiculescu \cite{FreeEntropyDimensionIII}, Ge \cite{GePrime} that free group factors do not have Cartan subalgebras, and are prime, as well as the fact that a von Neumann algebra generated by a family with free entropy dimension bigger than $1$ does not have Property Gamma. We refer the reader to \cite[Section 1.2]{FreePinsker} for a more detailed discussion on this.

The $1$-bounded entropy allows us to single out a particularly nice set of von Neumann subalgebras of a fixed tracial von Neumann algebra.

\begin{defn}
Suppose $(M,\tau)$ is a tracial von Neumann algebra, and that $P\leq M.$ We say that $P$ is a \emph{Pinsker algebra in $M$} if $h(P:M)\leq 0$ and for every $P\leq Q\leq M$ with $Q\ne P$ we have $h(Q:M)>0.$
\end{defn}

Recall that if $M$ is a von Neumann algebra, then $Q\leq M$ is \emph{maximal amenable} if $Q$ is amenable and for every $N\leq M$ with $Q\subseteq N$ and $N$ amenable, we have $N=Q.$ It follows from Property P\ref{I:subadditivity of 1 bdd ent} that if $Q\leq M$ is diffuse, and $h(Q:M)=0,$ then there is a \emph{unique} Pinsker $P\leq M$ with $Q\leq P.$
If $P\leq M$ is Pinsker \emph{and} amenable, it is necessarily \emph{maximal amenable} by Property P\ref{I:hyperfinite has 1-bdd ent zero}. Moreover, by P\ref{I:subadditivity of 1 bdd ent}, P\ref{I:hyperfinite has 1-bdd ent zero} it has the \emph{absorbing amenability property}. Namely, if $Q\leq M$ is amenable and $Q\cap P$ is diffuse, then $Q\leq P.$ It also has the following \emph{Gamma stability}  property (in the sense of \cite{CyrilAOP}): if $Q\leq M$ is such that $Q'\cap M^{\omega}$ and $Q\cap P$ are diffuse, then $Q\leq P.$

Of relevance to the Peterson-Thom conjecture is the following.

\begin{prop}\label{P:CPE implies PT}
Let $(M,\tau)$ be a tracial von Neumann algebra. Suppose that every Pinsker algebra in $M$ is amenable. Then given any diffuse, amenable $Q\leq M$ there is a unique, maximal amenable $P\leq M$ with $Q\subseteq P.$
\end{prop}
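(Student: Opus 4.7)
The plan is to produce a Pinsker algebra containing $Q$ via Zorn's lemma, invoke the hypothesis to get amenability, and then use subadditivity of the $1$-bounded entropy to absorb every amenable overalgebra of $Q$ into it.

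First I would consider the family $\mathcal F$ of diffuse von Neumann subalgebras $N \leq M$ with $Q \leq N$ and $h(N:M) \leq 0$, partially ordered by inclusion. Since $Q$ is diffuse and amenable, property P\ref{I:hyperfinite has 1-bdd ent zero} gives $h(Q:M)=0$, so $Q \in \mathcal F$. For any totally ordered chain $(N_\alpha)\subseteq\mathcal F$, the algebra $\bigvee_\alpha N_\alpha$ is diffuse (it contains $Q$) and by P\ref{I:increasing limits of 1bdd ent first variable} we have $h(\bigvee_\alpha N_\alpha:M)=\sup_\alpha h(N_\alpha:M)\leq 0$. So $\mathcal F$ is inductive, and Zorn yields a maximal element $P\in\mathcal F$. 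By construction $h(P:M)\leq 0$, and maximality in $\mathcal F$ forces the Pinsker condition: any $P\lneq Q'\leq M$ with $Q'\cap P \supseteq P$ diffuse (automatic) would give $Q'\in\mathcal F$ if $h(Q':M)\leq 0$, contradicting maximality. Hence $P$ is Pinsker.

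By hypothesis $P$ is amenable, so $P$ is an amenable von Neumann subalgebra of $M$ containing $Q$. To finish I would show $P$ is in fact the unique maximal amenable subalgebra of $M$ containing $Q$. Let $N\leq M$ be any amenable subalgebra with $Q\leq N$; I will show $N\subseteq P$. Since $N$ is diffuse (it contains $Q$) and amenable, property P\ref{I:hyperfinite has 1-bdd ent zero} gives $h(N:M)=0$. The intersection $N\cap P$ contains the diffuse algebra $Q$, so is diffuse, and thus the subadditivity property P\ref{I:subadditivity of 1 bdd ent} applies to give
\[
h(N\vee P:M)\leq h(N:M)+h(P:M)\leq 0+0=0.
\]
Now $N\vee P$ is diffuse (it contains $P$), and the Pinsker property of $P$ says that any strict extension $P\lneq Q'\leq M$ satisfies $h(Q':M)>0$. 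Since $h(N\vee P:M)\leq 0$, we must have $N\vee P = P$, i.e.\ $N\subseteq P$.

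This shows every amenable subalgebra of $M$ containing $Q$ is contained in $P$; in particular $P$ is maximal amenable and is the unique such maximal amenable subalgebra containing $Q$. There is no serious obstacle here: the only delicate points are verifying that the supremum in the Zorn step behaves correctly (handled by P\ref{I:increasing limits of 1bdd ent first variable}) and that $N\cap P$ is diffuse so that subadditivity applies (automatic because $Q\subseteq N\cap P$). All other steps are immediate consequences of the listed properties of $h(\,\cdot\,:M)$.
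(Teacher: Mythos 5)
Your proof is correct and follows essentially the same route as the paper: the paper invokes the existence and uniqueness of the Pinsker algebra containing $Q$ (established via P\ref{I:subadditivity of 1 bdd ent} and P\ref{I:increasing limits of 1bdd ent first variable} in the remarks preceding the proposition), applies the hypothesis to get amenability, and then uses subadditivity together with the Pinsker property to absorb any competing maximal amenable subalgebra, exactly as you do. The only difference is that you spell out the Zorn's lemma construction explicitly, which the paper leaves to the preceding discussion.
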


\begin{proof}
Suppose that $Q\leq M$ is diffuse and amenable. Let $P\leq M$ be the unique Pinsker algebra in $M$ with $Q\subseteq P.$ By assumption, $P$ is amenable. Since $P$ is Pinsker, it is necessarily maximal amenable. Suppose $\widehat{P}\leq M$ is another maximal amenable subalgebra of $M$ with $Q\subseteq \widehat{P}.$ Then
\[\widehat{P}\cap P\supseteq Q,\]
and since $Q$ is diffuse this forces $\widehat{P}\cap P$ to be diffuse. So  $h(P\vee \widehat{P}:M)\leq 0$, by Property P\ref{I:subadditivity of 1 bdd ent}. Since $P$ is Pinsker, $\widehat{P}\vee P\leq P$, which forces $\widehat{P}\subseteq P.$ Since $\widehat{P}$ is maximal amenable, we have $\widehat{P}=P$ and this completes the proof.
\end{proof}

The weak$^{*}$-topology on laws is what allows us to define Voiculescu's microstates, and by extension the $1$-bounded entropy. We will  also need another topology on the space of laws. Fix an index set $J.$ Recall the definition of $\pi_{\ell}$ for $\ell\in \Sigma_{J}$ discussed after Definition $\ref{defn:laws}.$
For $P\in \C^{*}\ip{(T_{j})_{j\in J}}$, $\ell\in \Sigma_{J}$, set
\[\|P\|_{L^{\infty}(\ell)}=\|\pi_{\ell}(P)\|_{\infty}.\]

It is \emph{not true}  that the map
\[\Sigma_{J}\times \C^{*}\ip{(T_{j})_{j\in J}}\to [0,\infty]\]
given by
$(\ell,P)\mapsto \|P\|_{L^{\infty}(\ell)}$
is continuous in the first variable. However, from (\ref{eqn: recover infty norm}) we have the following semi-continuity: if $\ell_{\alpha}$ is a net in $\Sigma_{J}$ and $\ell_{\alpha}\to \ell$ weak$^{*}$, then for all $P\in \C^{*}\ip{(T_{j})_{j\in J}}$
\[\|P\|_{L^{\infty}(\ell)}\leq \liminf_{\alpha}\|P\|_{L^{\infty}(\ell_{\alpha})}.\]
This motivates the definition of a different topology on $\Sigma_{J}.$
\begin{defn}
Let $J$ be an index set. The \emph{strong topology} on $\Sigma_{J}$ is the coarsest topology finer than the weak$^{*}$-topology which makes the map $\Sigma_{J}\to [0,\infty)$ given by $P\mapsto \|P\|_{L^{\infty}(\ell)}$ continuous for each $P\in \C^{*}\ip{(T_{j})_{j\in J}}.$
\end{defn}
Given  $\ell\in \Sigma_{J},$ a neighborhood basis at $\ell$ in the strong topology may be given by
\[\mathcal{O}_{V,F,\varepsilon}(\ell)=V\cap \bigcap_{P\in F}\{\phi\in \Sigma_{J}:|\|P\|_{L^{\infty}(\ell)}-\|P\|_{L^{\infty}(\phi)}|<\varepsilon\}\]
ranging over weak$^{*}$-neighborhoods $V$ of $\ell,$ finite sets $F\subseteq \C^{*}\ip{(T_{j}){j\in J}},$ and $\varepsilon\in (0,\infty).$ In fact, by semicontinuity, the sets
\[\mathcal{V}_{V,F,\varepsilon}(\ell)=V\cap \bigcap_{P\in F}\{\phi\in \Sigma_{J}:\|P\|_{L^{\infty}(\phi)}<\|P\|_{L^{\infty}(\ell)}+\varepsilon\}\]
ranging over weak$^{*}$-neighborhoods $V$ of $\ell,$ finite sets $F\subseteq \C^{*}\ip{(T_{j}){j\in J}},$ and $\varepsilon\in (0,\infty)$ form a neighborhood basis of $\ell\in \Sigma_{J}$ in the strong topology.

 Suppose we are given a sequence $(M_{k},\tau_{k})_{k}$ of tracial von Neumann algebras, $x_{k}\in M_{k}^{J},$  another von Neumann algebra $(M,\tau)$ and $x\in M^{J}.$ We will then say that \emph{the distribution of $x_{k}$ converges strongly to the distribution of $x$} if $\ell_{x_{k}}\to \ell_{x}$ in the strong topology. Concretely, this is just the conjunction of the following two properties:
 \begin{itemize}
     \item $\tau_{k}(P(x_{k}))\to \tau(P(x))$
for all $P\in \C^{*}\ip{(T_{j})_{j\in J}},$ and
\item $\|P(x_{k})\|_{\infty}\to \|P(x)\|_{\infty}$ for all $P\in \C^{*}\ip{(T_{j})_{j\in J}}.$
 \end{itemize}
 So our notion of strong convergence agrees with that already discussed in \cite{Male, MaleCollins}. To further illustrate the  meaning of strong convergence, we close with the following Lemma relating strong convergence to Hausdorff convergence of spectra. This is a well-known result, and we mainly prove it to give the reader some insight and practice as to what strong convergence is and why it is important. Recall that the Hausdorff metric on nonempty, compact subsets of $\C$ is given by
 \[d_{\Haus{}}(E,F)=\inf\{r>0:E\subseteq N_{r}(F)\mbox{ and } F\subseteq N_{r}(E)\}.\]

 \begin{lem}\label{L:strong is just Haus}
 Fix an index set $J.$ Let $(M_{k},\tau_{k}),k\in \N$ be a sequence of tracial von Neumann algebras and $x_{k}\in M_{k}^{J}.$ Let $(M,\tau)$ be a tracial von Neumann algebra and $x\in M^{J}.$ Assume that $\sup_{k}\|x_{k,j}\|_{\infty}<\infty$
 for all $j\in J.$ Suppose that $\ell_{x_{k}}\to \ell_{x}$ weak$^{*}.$  Then $\ell_{x_{k}}\to\ell_{x}$ strongly if and only if  for every $P\in \C^{*}\ip{(T_{j})_{j\in J}}$ with $P=P^{*}$ we have that $\sigma(P(x_{k}))\to \sigma(P(x))$ in the Hausdorff metric.
 \end{lem}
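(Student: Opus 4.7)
The reverse implication is the easy one. For any $P\in\C^{*}\ip{(T_{j})_{j\in J}}$ (not necessarily self-adjoint), $\|P(x_{k})\|_{\infty}^{2}=\|(P^{*}P)(x_{k})\|_{\infty}$, and $(P^{*}P)(x_{k})$ is a self-adjoint operator, so its operator norm equals its spectral radius $\max\{|\lambda|:\lambda\in\sigma((P^{*}P)(x_{k}))\}$. Since $P^{*}P$ is self-adjoint in the free $*$-algebra, our hypothesis gives that $\sigma((P^{*}P)(x_{k}))\to\sigma((P^{*}P)(x))$ in the Hausdorff metric, and it is immediate that Hausdorff convergence of compact subsets of $\R$ forces the max of the absolute values to converge. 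Hence $\|P(x_{k})\|_{\infty}\to\|P(x)\|_{\infty}$ for every $P$, which combined with the given weak$^{*}$-convergence is exactly strong convergence.

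For the forward direction, fix a self-adjoint $P$. The uniform bound $\sup_{k,j}\|x_{k,j}\|_{\infty}<\infty$ produces $R<\infty$ with $\sigma(P(x_{k}))\cup\sigma(P(x))\subseteq[-R,R]$ for all $k$. The key idea is to convert functional calculus with continuous bump functions into noncommutative polynomial evaluations via Stone--Weierstrass, so that strong convergence can be applied. Given any continuous $g\colon[-R,R]\to\C$ and any $\delta>0$, pick a polynomial $\tg$ with $\|g-\tg\|_{\infty,[-R,R]}<\delta$. Then $\tg\circ P\in\C^{*}\ip{(T_{j})_{j\in J}}$, and by $*$-homomorphism properties of continuous functional calculus, $\|\tg(P(y))-g(P(y))\|_{\infty}<\delta$ for any self-adjoint $P(y)$ with spectrum in $[-R,R]$. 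Strong convergence then applies directly to $\tg\circ P$.

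To prove $\sigma(P(x))\subseteq N_{\varepsilon}(\sigma(P(x_{k})))$ eventually, fix $\mu\in\sigma(P(x))$ and a continuous bump $g\colon[-R,R]\to[0,1]$ with $g(\mu)=1$ and $\supp g\subseteq[\mu-\varepsilon,\mu+\varepsilon]$. If $\mathrm{dist}(\mu,\sigma(P(x_{k})))\geq\varepsilon$, then $g\equiv 0$ on $\sigma(P(x_{k}))$ and $g(P(x_{k}))=0$. Taking $\delta=1/3$ in the polynomial approximation above, $\|\tg(P(x_{k}))\|_{\infty}\leq 1/3$ whereas $\|\tg(P(x))\|_{\infty}\geq|\tg(\mu)|\geq g(\mu)-1/3=2/3$, contradicting strong convergence for $\tg\circ P$. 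A standard finite cover of the compact set $\sigma(P(x))$ by $\varepsilon/2$-balls centered at finitely many $\mu$'s upgrades this to the uniform statement.

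For the reverse inclusion $\sigma(P(x_{k}))\subseteq N_{\varepsilon}(\sigma(P(x)))$ eventually, suppose for contradiction there exist $k_{n}\to\infty$ and $\mu_{k_{n}}\in\sigma(P(x_{k_{n}}))$ with $\mathrm{dist}(\mu_{k_{n}},\sigma(P(x)))\geq\varepsilon$. Since $\mu_{k_{n}}\in[-R,R]$, extract a subsubsequence with $\mu_{k_{n}}\to\mu_{\infty}$, and $\mathrm{dist}(\mu_{\infty},\sigma(P(x)))\geq\varepsilon$ since $\sigma(P(x))$ is closed. Choose a bump $g\colon[-R,R]\to[0,1]$ with $g(\mu_{\infty})=1$ and $\supp g\subseteq(\mu_{\infty}-\varepsilon/2,\mu_{\infty}+\varepsilon/2)$, so that $g\equiv 0$ on $\sigma(P(x))$ and $g(P(x))=0$. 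For $n$ large, $g(\mu_{k_{n}})$ is near $1$ by continuity, hence $\|g(P(x_{k_{n}}))\|_{\infty}\geq g(\mu_{k_{n}})\to 1$. Approximating $g$ by $\tg$ within $1/3$, we again get $\|\tg(P(x_{k_{n}}))\|_{\infty}\geq 2/3$ eventually while $\|\tg(P(x))\|_{\infty}\leq 1/3$, contradicting strong convergence of $\tg\circ P$. There is no real obstacle in the argument; the only conceptual step is viewing $\tg\circ P$ as a single noncommutative $*$-polynomial to which the hypothesis applies.
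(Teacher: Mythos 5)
Your proof is correct and follows essentially the same route as the paper: both directions reduce continuous functional calculus to noncommutative polynomial evaluations via Stone--Weierstrass and then detect the spectra with bump functions and the spectral mapping theorem (the paper phrases the approximation step as "$\|f(P(x_{k}))\|_{\infty}\to\|f(P(x))\|_{\infty}$ for all $f\in C([-M,M])$" before choosing Urysohn bumps, while you approximate each bump individually within $1/3$, but this is only a cosmetic difference). The easy direction is also handled the same way, via the $C^{*}$-identity and the spectral radius of $P^{*}P$.
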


\begin{proof}
First, suppose that for every self-adjoint $P\in \C^{*}\ip{(T_{j})_{j\in J}}$ we have that $\sigma(P(x_{k}))\to \sigma(P(x))$ in the Hausdorff metric. Now fix $Q\in \C^{*}\ip{(T_{j})_{j\in J}},$ and let $\varepsilon>0.$  Then for all sufficiently large $k,$ we have that $\sigma((Q^{*}Q)(x_{k}))\subseteq N_{\varepsilon}(\sigma((Q^{*}Q)(x))).$ Since the norm of a self-adjoint element is given by its spectral radius, it follows that
$\|Q(x_{k})\|_{\infty}^{2}=\|(Q^{*}Q)(x_{k})\|_{\infty}\leq \varepsilon+\|(Q^{*}Q)(x)\|_{\infty}= \varepsilon+\|Q(x)\|_{\infty}^{2}$
for large $k.$ Since $\varepsilon>0$ is arbitrary, we have shown that
$\limsup_{k\to\infty}\|Q(x_{k})\|_{\infty}\leq \|Q(x)\|_{\infty}.$
The fact that
$\|Q(x)\|_{\infty}\leq \liminf_{k\to\infty}\|Q(x_{k})\|_{\infty}$ is already a consequence of weak$^{*}$  convergence of $\ell_{x_{k}}$ to $\ell_{x}.$

For the reverse direction, assume that $\ell_{x_{k}}\to \ell_{x}$ strongly. First choose a $M>0$ so that $\|P(x_{k})\|_{\infty}\leq M$ for all $k.$ This is possible as $\sup_{k}\|x_{k,j}\|_{\infty}<\infty$ for all $j\in J.$ Note that we have $\|f(P(x_{k}))\|_{\infty}\to \|f(P(x))\|_{\infty}$ for all $f\in C([-M,M]).$ Indeed, the set of $f\in C([-M,M])$ for which $\|f(P(x_{k}))\|_{\infty}\to_{k\to\infty} \|f(P(x))\|_{\infty}$ can be directly shown to be a closed subset of $C([-M,M]),$ and by our assumption of strong convergence it contains all polynomials. So $\|f(P(x_{k}))\|_{\infty}\to \|f(P(x))\|_{\infty}$ for all $f\in C([-M,M])$ by the Stone-Weierstrass theorem. Let $\varepsilon>0,$ and apply Urysohn's Lemma to find a continuous function $\phi\in C([-M,M])$ which is $0$ on $\sigma(P(x))$ and is $1$ on $N_{\varepsilon}(\sigma(P(x))^{c}\cap [-M,M].$ Then $\|\phi(P(x_{k}))\|_{\infty}\to_{k\to\infty} \|\phi(P(x))\|_{\infty}=0,$ and so for all large $k$ we have $\|\phi(P(x_{k}))\|_{\infty}<\frac{1}{2}.$ By the spectral mapping theorem, $\sigma(\phi(P(x_{k})))=\phi(\sigma(P(x_{k}))).$ Since $\phi=1$ on $N_{\varepsilon}(\sigma(P(x)))^{c}\cap [-M,M],$ and $\|\phi(P(x_{k}))\|$ is the supremum of $|\phi|$ over $\sigma(P(x_{k})),$ it follows that $\sigma(P(x_{k}))\subseteq N_{\varepsilon}(\sigma(P(x)))$ for all large $k.$

So it just remains to show that $\sigma(P(x))\subseteq N_{\varepsilon}(\sigma(P(x_{k})))$ for all large $k.$ For every $t\in \sigma(P(x)),$ choose a $\psi_{t}\in C([-M,M])$ with $\psi_{t}(t)=1$ and $\psi_{t}\big|_{B_{\varepsilon/2}(t)^{c}\cap [-M,M]}=0.$ As above, there is a $K_{t}\in \N$ so that for all $k\geq K_{t}$ we have $\|\psi_{t}(P(x_{k}))\|_{\infty}\geq 1/2.$ As in the above paragraph, this implies that $B_{\varepsilon/2}(t)\cap \sigma(P(x_{k}))\ne \varnothing$ and so $t\in N_{\varepsilon/2}(\sigma(P(x_{k})))$ for all $k\geq K_{t}.$ Since $\sigma(P(x))$ is compact, we can choose $t_{1},\cdots,t_{n}\in \sigma(P(x))$ so that $\sigma(P(x))\subseteq N_{\varepsilon/2}(\{t_{1},\cdots,t_{n}\}).$ Set $K=\max(K_{t_{1}},\cdots,K_{t_{n}}).$ Then for all $k\geq K,$
\[\sigma(P(x))\subseteq N_{\varepsilon/2}(\{t_{1},\cdots,t_{n}\})\subseteq N_{\varepsilon}(\sigma(P(x_{k}))).\]

\end{proof}

Note that the weak$^{*}$-convergence of $\ell_{x_{k}}\to \ell_{x}$ implies the weak$^{*}$-convergence of $\mu_{P(x_{k})}$ to $\mu_{P(x)}$ for all self-adjoint $P\in \C^{*}\ip{(T_{j})_{j\in J}}$. The above lemma than asserts that strong convergence of $\ell_{x_{k}}\to \ell_{x}$ means that there one does not have any ``outliers" in the spectrum of $P(x_{k}).$ In random matrices, this is often called having a ``hard edge."

We phrased Lemma \ref{L:strong is just Haus} in terms of strong convergence of laws of specific elements because it is more natural for the reader who might have some experience with random matrices. For example, if $A^{(k)}\in M_{n(k)}(\C)^{J}$ and if there is a tracial von Neumann algebra $(M,\tau)$ and a tuple $x\in M^{J}$ with $\ell_{A^{(k)}}\to \ell_{x}$ weak$^{*},$ then strong convergence of $A^{(k)}$ to $x$ just asserts that for  any self-adjoint $P\in \C^{*}\ip{(T_{j})_{j\in J}}$ the spectral distribution of $P(A^{(k)})$ converges weak$^{*}$ to the spectral measure of $P(x),$ and the spectrum of $P(A^{(k)})$ converges to the spectrum of $P(x)$ in the Hausdorff sense. However, one can phrase Lemma \ref{L:strong is just Haus} without referring to any ambient tracial von Neumann algebra. Suppose we have a sequence $\ell_{n}\in \Sigma_{R,J}$ for some $R\in [0,\infty)^{J}$ with $\ell_{n}\to \ell$ weak$^{*}.$ Lemma \ref{L:strong is just Haus}  is then equivalent to saying that $\ell_{n}\to \ell$ strongly if and only if for all self-adjoint $P\in \C^{*}\ip{(T_{j})_{j\in J}}$ we have $\sigma(\pi_{\ell_{k}}(P))\to \sigma(\pi_{\ell}(P))$ in the Hausdorff sense.

\subsection{Measures on microstates and concentration thereof}\label{S:measures}

\begin{defn}
Let $(M,\tau)$ be a tracial von Neumann algebra, $I$ an index set, and $x\in M^{J}.$  Suppose we have a  sequence $n(k)\in \N$ with $n(k)\to \infty,$ and $\mu^{(k)}\in \Prob(M_{n(k)}(\C)^{I}).$ Then we say that $\mu^{(k)}$ is \emph{asymptotically supported on microstates for $x$} if  there exists an $R\in [0,\infty)^{I}$ with
\begin{itemize}
    \item $\|x_{i}\|_{\infty}\leq R_{i}$ for all $i\in I,$ \item  we have
$\mu^{(k)}(\Gamma_{R}^{(n(k))}(\mathcal{O}))\to_{k\to\infty}1$ for every weak$^{*}$-neighborhood $\mathcal{O}$ of $\ell_{x}.$
\end{itemize}
Suppose $J$ is another index set, $y\in M^{J},$ and $\nu^{(k)}\in \Prob(M_{k}(\C)^{J}).$ Then we say that $\nu^{(k)}$ is \emph{asymptotically supported on microstates for $y$ in the presence of $x$} if there is an $R\in [0,\infty)^{J\sqcup I}$ so that
\begin{itemize}
    \item $\|x_{i}\|_{\infty}\leq R_{i}$ for all $i\in I,$
    \item $\|y_{j}\|_{\infty}\leq R_{j}$ for all $j\in J,$
    \item $\nu^{(k)}(\Gamma_{R}^{(n(k))}(y:\mathcal{O}))\to 1$ for every weak$^{*}$-neighborhood $\mathcal{O}$ of $\ell_{y,x}.$
\end{itemize}
\end{defn}

Recall that a \emph{Polish space} is a topological space $X$ which is separable and completely metrizable. Such a space is naturally equipped with its Borel $\sigma$-algebra, which is the $\sigma$-algebra generated by the open subsets of $X.$ For a Polish space $X,$ we let $\Prob(X)$ be the space of Borel probability measures on $X.$

\begin{defn}
 A \emph{pseudometric measure space} is a triple $(X,\mu,d)$ where
\begin{itemize}
\item $X$ is a Polish space,
\item $\mu\in \Prob(X),$
\item $d$ is a continuous pseudometric on $X$ (giving $X\times X$ the product topology).
\end{itemize}
Given a pseudometric measure space $(X,\mu,d)$ we define its \emph{concentration function}
$\alpha_{\mu,d}\colon (0,\infty)\to [0,1]$
by
\[\alpha_{\mu,d}(\varpesilon)=\inf\{\mu(N_{\varepsilon}(E,d)^{c}):E\subseteq X\mbox{ is Borel and } \mu(E)\geq 1/2\}.\]
\end{defn}
An alternative (and typically more useful) way to view the concentration function is as follows. If $E\subseteq X$ is Borel, and $\mu(E)\geq 1/2,$ then
\[\mu(N_{\varepsilon}(E,d))\geq 1-\alpha_{\mu,d}(\varepsilon).\]
Typically one is interested in sequences of pseudometric measure spaces $(X_{k},\mu_{k},d_{k})$ so that $\alpha_{\mu_{k},d_{k}}(\varepsilon)$ decays rapidly for each fixed $\varespilon>0.$

\begin{defn}\label{defn:ECM}
Let $(X_{k},\mu_{k},d_{k})$ be a sequence of pseudometric measures spaces,  $\mu^{(k)}\in \Prob(X_{k}),$ and $(r_{k})_{k}\in (0,\infty)^{\N}$ with $r_{k}\to \infty.$ We say that $(X_{k},\mu^{(k)},d_{k})$ exhibits \emph{exponential concentration at scale $r_{k}$} if for every $\varepsilon>0,$
\[\limsup_{k\to\infty}\frac{1}{r_{k}}\log \alpha_{\mu^{(k)},d_{k}}(\varepsilon)<0.\]
Suppose that $n(k)\in \N,$ with $n(k)\to\infty,$ and that $J$ is a set. Suppose that  we have a sequence $\mu^{(k)}\in \Prob(M_{k}(\C)^{J}).$ Then we say that $\mu^{(k)}$ exhibits \emph{exponential concentration at scale $n(k)^{2}$} if for every finite $F\subseteq J$ the sequence $(M_{n(k)}(\C)^{J},\mu^{(k)},d_{F}^{\orb{}})$ exhibits exponential concentration at scale $n(k)^{2}.$
\end{defn}
If it is clear form the context, we will often drop reference to $X_{k},d_{k}$ and say ``$\mu_{k}$ exhibits exponential concentration at scale $r_{k}$."
As we shall  show shortly (see Lemma \ref{L:concentration expansion}) exponential concentration implies that if $E_{k}\subseteq X_{k}$ are Borel and asymptotically have ``nontrivial" size, i.e.
\[\lim_{k\to\infty}\mu^{(k)}(E_{k})^{1/r_{k}}=1,\]
then
\[\lim_{k\to\infty}\mu^{(k)}(N_{\varepsilon}(E_{k},d))=1\]
for every $\varepsilon>0.$
So for every sequence of Borel subsets of $X_{k}$ which are ``not exponentially small", then, no matter how small $\varepsilon$ is, expanding $E_{k}$ to its $\varpesilon$-neighborhood makes it ``nearly everything." This is the reason for the name ``concentration function", it gives precise control over the rate at which the measures must concentrate near sets that are not ``exponentially small". Despite being a very strong concentration property, there are nevertheless many examples of natural sequences of metric measure spaces which satisfy exponential concentration, and this concept is of frequent use in probability theory and functional analysis.

\subsection{$L^{2}$-Continuous Functional Calculus}\label{S:nc func calc}

We discuss a generalization $L^{2}$-continuous functional calculus of Jekel developed in \cite[Section 3]{JekelEAP}, and developed further  in \cite[Section 2]{FreePinsker}. That functional calculus was defined for self-adjoint noncommutative variables and we will need the version for general variables defined in \cite[Section 13.7]{JekelThesis}. We start by recalling the construction and general properties in the self-adjoint case, and then explain how to give the appropriate definition in generality and derive the corresponding results from the self-adjoint case.

We will need to introduce some notation for the self-adjoint case. Given an index set $J,$ we let $\C\ip{(T_{j})_{j\in J}}$ be the algebra of noncommutative polynomials (\emph{not} $*$-polynomials) in the abstract variables $(T_{j})_{j\in J}.$ We may view $\C\ip{(T_{j})_{j\in J}}$ as the free $\C$-algebra indexed by $J.$ We turn $\C\ip{(T_{j})_{j\in J}}$ into a $*$-algebra, by giving it the unique $*$-structure which makes $T_{j}$ self-adjoint for all $j\in J.$ When viewed as a $*$-algebra, we may think of $\C\ip{(T_{j})_{j\in J}}$ as the universal $*$-algebra generated by self-adjoint elements indexed by $J.$ We will also need a space of self-adjoint laws. We adopt similar notational conventions as in the non-self-adjoint case, e.g. if $J=\{1,\cdots,n\}$ we will typically use $\C\ip{T_{1},\cdots,T_{n}}$ instead of $\C\ip{(T_{j})_{j\in J}}$.

\begin{defn}
Let $J$ be an index set, and $R\in [0,\infty)^{J}.$ Let $\ev_{T}\colon\C^{*}\ip{(S_{j})_{j\in J}}\to \C\ip{(T_{j})_{j\in J}}$ be the unique $*$-homomorphism satisfying $\ev_{T}(S_{j})=T_{j}$ for all $j\in J.$ Let $\Sigma_{J}^{(s)}$ be the set of all linear functionals $\ell\colon \C\ip{(T_{j})_{j\in J}}\to \C$ so that $\ell\circ \ev_{T}\in \Sigma_{J}.$ We let
\[\Sigma_{R,J}^{(s)}=\{\ell\in \Sigma_{J}^{(s)}:\ell \circ \ev_{T}\in \Sigma_{R,J}\}.\]
\end{defn}
Concretely, a linear functional $\ell\colon \C\ip{(T_{j})_{j\in J}}\to \C$ is in $\Sigma_{J}^{(s)}$ if and only if it satisfies the following axioms:
\begin{itemize}
\item $\ell(PQ)=\ell(QP)$ for all $Q,P\in \C\ip{(T_{j})_{j\in J}},$
\item $\ell(P^{*}P)\geq 0$ for all $P\in \C\ip{(T_{j})_{j\in J}},$
\item $\ell(1)=1,$
\item there is a $R\in [0,\infty)^{J}$ so that for all $n\in \N$, and all $j_{1},j_{2},\cdots,j_{n}\in J$,
\[|\ell(T_{j_{1}}T_{j_{2}}\cdots T_{j_{n}})|\leq R_{j_{1}}R_{j_{2}}\cdots R_{j_{n}}.\]
\end{itemize}
Moreover, given $\ell\in \Sigma_{J}^{(s)}$ and $R\in [0,\infty)^{J},$ we have that $\ell\in \Sigma_{R,J}^{(s)}$ if and only if it satisfies the fourth bullet point for this $R.$
\begin{defn}
Fix an index set $J$ and $R \in (0,+\infty)^{J}$.  Consider the space
\[
\mathcal{A}_{R,J}^{(s)} = C(\Sigma_{R,J}^{(s)}) \otimes_{\textnormal{alg}} \C\ip{(T_{j})_{j\in J}}
\]
Given  a tracial von Neumann algebra $(M,\tau)$ and $x \in M_{sa}^J$ with $\|x_{j}\|\leq R_j$, we define the evaluation map to be the linear map $\ev_{x}: \mathcal{A}_{R,J}^{(s)}\to M$ satisfying
\[\ev_{x}(\phi\otimes P)=\phi(\ell_{x})P(x), \mbox{ for $\phi \in C(\Sigma_{R,J}^{(s)})$, $P\in \C\ip{(T_{j})_{j\in J}}$.}\]
We then define a semi-norm on $\mathcal{A}_{R,J}^{(s)}$ by
\[
\|f\|_{R,2} = \sup_{(M,\tau), x} \|\ev_{x}(f)\|_{L^2(M,\tau)},
\]
where the supremum is over all tracial $\mathrm{W}^*$-algebras $(M,\tau)$ and all $x\in \cM_{sa}^I$ with $\|x_{j}\| \leq R_j$ for all $j\in J$. Denote by $\mathcal{F}_{R,J,2}^{(s)}$ the completion of $\mathcal{A}_{R,J}^{(s)} / \{f \in \mathcal{A}_{R,J}: \|f\|_{R,2} = 0\}$.
\end{defn}
In \cite{FreePinsker}, the superscripts $(s)$ are not there, so for example $\mathcal{A}_{R,J,2}^{(s)}$ is denoted by $\mathcal{A}_{R,J,2}$ etc. We have elected to use the superscript $(s)$ here to reference the fact that spaces $\Sigma_{R,J}^{(s)},$ $\mathcal{A}_{R,J,2}^{(s)},$ $\mathcal{F}_{R,J,2}^{(s)}$ are noncommutative function spaces of \emph{self-adjoint} variables, in contrast to the function spaces for non-self-adjoint variables we will discuss imminently.

 By construction, for every tracial von Neumann algebra $(M,\tau)$, and for every  $x \in M_{s.a.}^I$ with $\|x_j\| \leq R_j$, the evaluation map $\ev_{\mathbf{x}}: \mathcal{A}_{R,J}^{(s)} \to M$ extends to a well-defined $\mathcal{F}_{R,J,2}^{(s)} \to L^2(M,\tau)$, which we continue to denote by $\ev_{x}$, and we will also write $f(x) = \ev_{x}(f)$.
 If $(M,\tau)$ is a tracial von Neumann algebra, and $\xi\in L^{2}(M,\tau)\setminus M$ we set $\|\xi\|_{\infty}=\infty.$ For $f\in \mathcal{F}_{R,J,2}^{(s)}$ we set
 \[\|f\|_{R,\infty}=\sup_{x,(M,\tau)}\|f(x)\|_{\infty}\in [0,+\infty]\]
 where the supremum is over all tracial von Neumann algebras and all $x\in M_{s.a.}^{J}.$ We now recall the main properties of this construction, with pointers to \cite{FreePinsker} where the relevant details are shown. We let \[\mathcal{F}_{R,J,\infty}^{(s)}=\{f\in \mathcal{F}_{R,J,2}:\|f\|_{R,\infty}<\infty\}.\]

 \begin{list}{{\bf P\arabic{pcounter}:} ~ }{\usecounter{pcounter}}
 \item The natural multiplication, addition, and $*$-algebra operations on $C(\Sigma_{R,J}^{(s)})\otimes_{\textnormal{alg}}\C\ip{(T_{j})_{j\in J}}$ have a  unique  extension to $\mathcal{F}_{R,J,\infty}^{(s)}$ which satisfies
 \[\|f\|_{R,\infty}=\|f^{*}\|_{R,\infty},\,\,\ \|f\|_{R,2}=\|f^{*}\|_{R,2}\]
 \[\|fg\|_{R,\infty}\leq \|f\|_{R,\infty}\|g\|_{R,\infty},\,\,\,\, \|fg\|_{R,2}\leq \|f\|_{R,\infty}\|g\|_{R,2}.\]
 These operations together with the norm $\|\cdot\|_{R,\infty}$ turn $\mathcal{F}^{(s)}_{R,J,\infty}$ into a $C^{*}$-algebra. \cite[Lemma 2.3]{FreePinsker}
 \item For any tracial von Neumann algebra $(M,\tau),$ and any $x\in M_{s.a.}^{J}$ with $\|x_{j}\|_{\infty}\leq R_{j}$ for all $j\in J,$  the evaluation map $\ev_{x}\colon \mathcal{F}^{(s)}_{R,J,\infty}\to M$ is surjective \cite[Proposition 2.4]{FreePinsker}. In fact, given any $a\in M$ there is an $f\in \mathcal{F}^{(s)}_{R,J,\infty}$ with $\|f\|_{R,\infty}\leq \|a\|_{\infty}$ so that $\ev_{x}(f)=a.$
 \item Every $f\in \mathcal{F}^{(s)}_{R,J,\infty}$ is $\|\cdot\|_{2}$-uniformly continuous in the following sense. For every $\varespilon>0,$ there is a $\delta>0$ and a finite $F\subseteq J$ so that if $(M,\tau)$ is a tracial von Neumann algebra and $x,y\in M_{s.a.}^{J}$ with $\|x_{j}\|_{\infty}$,$\|y_{j}\|_{\infty}\leq R_{j}$ for all $j\in J$ and $\|x_{j}-y_{j}\|_{2}<\delta$ for all $j\in F,$ then $\|f(x)-f(y)\|_{2}<\varepsilon.$ \cite[Proposition 2.8]{FreePinsker}.
 \end{list}
We now wish to define an analogous space of ``noncommutative functions" when the variables are not self-adjoint, and we will want it to satisfy analogues of the above 3 properties. So fix an index set $J,$ and $R\in [0,\infty)^{J}.$
Define
\[\mathcal{A}_{R,J}=C(\Sigma_{R,J})\otimes_{\textnormal{alg}}\C^{*}\ip{(T_{j})_{j\in J}}.\]
Given a tracial von Neumann algebra $(M,\tau)$ and $x\in M^{J}$ with $\|x\|_{\infty}\leq R_{j}$ for all $j\in J,$ we let $\ev_{x}\colon \mathcal{A}_{R,J}\to M$ be the linear map satisfying $\ev_{x}(\phi\otimes P)=\phi(\ell_{x})P(x)$ for $\phi \in C(\Sigma_{R,J})$,$P\in \C^{*}\ip{(T_{j})_{j\in J}}$. For $f\in \mathcal{A}_{R,J,2}$ we will use $f(x)$ for $\ev_{x}(f).$
Define a seminorm $\|\cdot\|_{R,2}$ on $\mathcal{A}_{R,J}$ by
\[\|f\|_{R,2}=\sup_{x,(M,\tau)}\|f(x)\|_{2},\]
where the supremum is over all tracial von Neumann algebras $(M,\tau)$ and all $x\in M^{J}.$ We then let $\mathcal{F}_{R,J,2}$ be the completion of
\[\mathcal{A}_{R,J,2}/\{f\in \mathcal{A}_{R,J,2}:\|f\|_{R,2}=0\}\]
under the norm induced by $\|\cdot\|_{R,2}.$  For $f\in \mathcal{F}_{R,J,2}$ we let
\[\|f\|_{R,\infty}=\sup_{x, (M,\tau)}\|f(x)\|_{\infty}\in [0,+\infty],\]
and we set
\[\mathcal{F}_{R,J,\infty}=\{f\in \mathcal{F}_{R,J,2}:\|f\|_{R,\infty}<\infty\}.\]
For a tracial von Neumann algebra $(M,\tau)$ and $x\in M^{J},$ we then have that $f(x)\in M.$

The algebras $\mathcal{F}_{R,J,\infty}$,$ \mathcal{F}_{R,J,\infty}^{(s)}$ are both examples of algebras which are completions of a $C^{*}$-algebra with respect to uniform 2-norm coming from a family of traces. These are now known as \emph{uniformly tracially complete $C^{*}$-algebras}. Ozawa defined such a completion when the family consisted of \emph{all} traces (see \cite[p. 351-352]{Ozawa2013}), the special case of convex subsets of the trace space appeared recently in the study of classification of nuclear $C^{*}$-algebras and their homomorphisms (see \cite{BBSTWW2019, CETWW2019, CETW2020}).

The following is proved exactly as in \cite[Lemma 2.3]{FreePinsker}.

\begin{prop}\label{prop: its a C*-alg}
Let $J$ be an index set and $R\in [0,\infty)^{J}.$ Then the  product and   $*$-operation have a unique extension to product and $*$-operations on $\mathcal{F}_{R,J,\infty}$ which satisfy the axioms of a $*$-algebra as well as the following estimates
 \[\|f\|_{R,\infty}=\|f^{*}\|_{R,\infty},\,\,\ \|f\|_{R,2}=\|f^{*}\|_{R,2}\]
 \[\|fg\|_{R,\infty}\leq \|f\|_{R,\infty}\|g\|_{R,\infty},\,\,\,\, \|fg\|_{R,2}\leq \|f\|_{R,\infty}\|g\|_{R,2}.\]
Under these extended operations and the norm $\|\cdot\|_{R,\infty},$ the $*$-algebra $\mathcal{F}_{R,J,\infty}$ is a $C^{*}$-algebra.
\end{prop}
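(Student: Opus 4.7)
The plan is to adapt the argument of \cite[Lemma 2.3]{FreePinsker} (the self-adjoint version) with cosmetic modifications to handle non-self-adjoint variables. The bulk of the work sits on the dense subspace $\mathcal{A}_{R,J}$: the tensor-product structure makes each evaluation map $\ev_{x}\colon\mathcal{A}_{R,J}\to M$ (at $x\in M^{J}$ with $\|x_{j}\|\leq R_{j}$) a $*$-algebra homomorphism, so $\|\cdot\|_{R,\infty}=\sup_{x,(M,\tau)}\|\ev_{x}(\cdot)\|_{\infty}$ is a $C^{*}$-seminorm. It is finite on $\mathcal{A}_{R,J}$ via the crude bound $\|P(x)\|_{\infty}\leq\sum_{\alpha}|c_{\alpha}|R^{|\alpha|}$ for a $*$-polynomial $P=\sum_{\alpha}c_{\alpha}T^{\alpha}$. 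Pointwise use of $\|ab\|_{2}\leq\|a\|_{\infty}\|b\|_{2}$ and traciality of each $\tau$ gives $\|fg\|_{R,2}\leq\|f\|_{R,\infty}\|g\|_{R,2}$ and $\|f^{*}\|_{R,2}=\|f\|_{R,2}$ on $\mathcal{A}_{R,J}$.

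The $*$-operation then extends from $\mathcal{A}_{R,J}$ to $\mathcal{F}_{R,J,2}$ as a $\|\cdot\|_{R,2}$-isometric conjugate-linear involution, and it restricts to $\mathcal{F}_{R,J,\infty}$ because $\ev_{x}(f^{*})=\ev_{x}(f)^{*}$ in each $M$ by continuity of evaluation. For the product, I would identify $\mathcal{F}_{R,J,\infty}$ with the $\|\cdot\|_{R,\infty}$-completion $B$ of $\mathcal{A}_{R,J}/{\sim}$. The space $B$ is automatically a $C^{*}$-algebra, with product and $*$-operation extended from $\mathcal{A}_{R,J}$ by $\|\cdot\|_{R,\infty}$-density, since these operations are jointly continuous on $\|\cdot\|_{R,\infty}$-bounded sets. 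The inequality $\|\cdot\|_{R,2}\leq\|\cdot\|_{R,\infty}$ (valid in every tracial von Neumann algebra, hence at the level of $\mathcal{A}_{R,J}$) lifts to a continuous injection $B\hookrightarrow\mathcal{F}_{R,J,2}$, with image contained in $\mathcal{F}_{R,J,\infty}$; injectivity uses faithfulness of the trace, since a $\|\cdot\|_{R,\infty}$-Cauchy sequence whose image in $\mathcal{F}_{R,J,2}$ is zero has every evaluation vanishing in $L^{2}$, and hence in $M$.

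The main obstacle is to show this injection is onto $\mathcal{F}_{R,J,\infty}$: every $f\in\mathcal{F}_{R,J,2}$ with $\|f\|_{R,\infty}<\infty$ should be $\|\cdot\|_{R,\infty}$-approximable by elements of $\mathcal{A}_{R,J}$. This is exactly the statement of the self-adjoint \cite[Lemma 2.3]{FreePinsker}, whose argument translates with only notational changes (swapping $\C\langle(T_{j})\rangle$ for $\C^{*}\langle(T_{j})\rangle$ and $\Sigma_{R,J}^{(s)}$ for $\Sigma_{R,J}$ throughout), or alternatively reduces to the self-adjoint case by embedding $\mathcal{A}_{R,J}$ into $\mathcal{A}_{R',J\sqcup J}^{(s)}$ via the identification $T_{j}\leftrightarrow T_{j}^{(1)}+iT_{j}^{(2)}$ for a suitably rescaled $R'$. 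Once the identification $B\cong\mathcal{F}_{R,J,\infty}$ is in place, all claims in the proposition —  the norm inequalities, the $C^{*}$-identity, and uniqueness of the extension — follow directly from the $C^{*}$-algebra structure of $B$ and $\|\cdot\|_{R,\infty}$-density of $\mathcal{A}_{R,J}$ therein.
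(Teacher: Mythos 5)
Your first paragraph and the extension of the $*$-operation are fine and follow the intended route. The problem is the construction of the product. You propose to identify $\mathcal{F}_{R,J,\infty}$ with the abstract $C^{*}$-completion $B$ of $(\mathcal{A}_{R,J},\|\cdot\|_{R,\infty})$ and transport the $C^{*}$-structure across this identification, and you correctly flag surjectivity of $B\hookrightarrow\mathcal{F}_{R,J,\infty}$ as the main obstacle. But that surjectivity statement is false, and it is not what \cite[Lemma 2.3]{FreePinsker} asserts (that lemma is the self-adjoint analogue of the whole proposition, not a density statement). Concretely, take $J=\{1\}$, $R=1$ in the self-adjoint setting, let $x_{0}$ be a semicircular element with $\sigma(x_{0})=[-1,1]$, and use \cite[Proposition 2.4]{FreePinsker} to produce $f\in\mathcal{F}^{(s)}_{R,J,\infty}$ with $f(x_{0})=1_{[0,1]}(x_{0})$ and $\|f\|_{R,\infty}\leq 1$. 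Any $g\in\mathcal{A}^{(s)}_{R,J}$ evaluates at $x_{0}$ to $q(x_{0})$ for a single polynomial $q$, so
\[\|f-g\|_{R,\infty}\geq \|1_{[0,1]}(x_{0})-q(x_{0})\|_{\infty}=\|1_{[0,1]}-q\|_{L^{\infty}(\mu_{x_{0}})}\geq \tfrac{1}{2},\]
since $\mu_{x_{0}}$ has full support and $1_{[0,1]}$ jumps at $0$. Thus $\mathcal{A}_{R,J}$ is not $\|\cdot\|_{R,\infty}$-dense in $\mathcal{F}_{R,J,\infty}$; the image of $B$ is a proper $C^{*}$-subalgebra, and the $C^{*}$-structure cannot be obtained by transport from $B$. (This is exactly the $C[0,1]\subsetneq L^{\infty}[0,1]$ phenomenon for uniform tracial completions, which the paper alludes to after the proposition.)

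The argument that does work defines the product on $\mathcal{F}_{R,J,\infty}$ directly, using the mixed estimate rather than density. For fixed $h\in\mathcal{A}_{R,J}$, the maps $g\mapsto hg$ and $g\mapsto gh$ are $\|\cdot\|_{R,2}$-bounded (with norm $\leq\|h\|_{R,\infty}$) on $\mathcal{A}_{R,J}$, hence extend to $\mathcal{F}_{R,J,2}$, and one checks $\ev_{x}(fh)=f(x)h(x)$ for all $x$. Then for $f,g\in\mathcal{F}_{R,J,\infty}$ one picks $g_{n}\in\mathcal{A}_{R,J}$ with $g_{n}\to g$ in $\|\cdot\|_{R,2}$ and uses
\[\|fg_{n}-fg_{m}\|_{R,2}=\sup_{x}\|f(x)(g_{n}(x)-g_{m}(x))\|_{2}\leq\|f\|_{R,\infty}\|g_{n}-g_{m}\|_{R,2}\]
to define $fg=\lim_{n}fg_{n}$ in $\mathcal{F}_{R,J,2}$; since $\ev_{x}(fg)=f(x)g(x)$ one gets $\|fg\|_{R,\infty}\leq\|f\|_{R,\infty}\|g\|_{R,\infty}$, so $fg\in\mathcal{F}_{R,J,\infty}$. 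The algebra axioms, the stated norm identities, and the $C^{*}$-identity all hold because they hold after every evaluation and because $\|\cdot\|_{R,2}$ and $\|\cdot\|_{R,\infty}$ are computed as suprema over evaluations (which in particular separate points). Uniqueness follows from your observation that the estimates force $\|\cdot\|_{R,2}$-continuity of multiplication in each variable against $\|\cdot\|_{R,\infty}$-bounded elements, together with $\|\cdot\|_{R,2}$-density of $\mathcal{A}_{R,J}$. Finally, completeness of $(\mathcal{F}_{R,J,\infty},\|\cdot\|_{R,\infty})$ needs its own short argument (a $\|\cdot\|_{R,\infty}$-Cauchy sequence is $\|\cdot\|_{R,2}$-Cauchy, and the $\|\cdot\|_{R,2}$-limit has evaluations in $M$ with uniformly bounded operator norm); it does not come for free from completeness of $B$.
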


We now turn to the other two main properties of $\mathcal{F}_{R,J,\infty}$ we will want. If $(M_{k},\tau_{k}),k=1,2$ are two tracial von Neumann algebras and $\Theta\colon M_{1}\to M_{2}$ is a trace-preserving $*$-homomorphism, then $\|\Theta(x)\|_{2}=\|x\|_{2}$ for all $x\in M_{k}$. It thus follows that $\Theta$ extends uniquely to an isometry $L^{2}(M_{1},\tau_{1})\to L^{2}(M_{2},\tau_{2})$ which we still denote by $\Theta$.
\begin{thm}\label{thm:nc func calc properties}
Let $J$ be an index set and $R\in [0,\infty)^{J}.$ We then have the following properties of the noncommutative function space $\mathcal{F}_{R,J,\infty}.$
\begin{enumerate}[(i)]
\item Let $(M,\tau)$ be a tracial von Neumann  algebra and $x\in M^{J}$ with $\|x\|_{\infty}\leq R_{j}$ for all $j\in J.$ Then the map $\mathcal{F}_{R,J,\infty}\to W^{*}(x)$ given by $f\mapsto f(x)$ is surjective. In fact, for all $a\in W^{*}(x),$ there is an $f\in \mathcal{F}_{R,J,\infty}$ with $\|f\|_{R,\infty}\leq \|a\|_{\infty}$ and so that $f(x)=a.$ \label{item:surjective nc func calc}
\item Every $f\in \mathcal{F}_{R,J,2}$ is $\|\cdot\|_{2}$-uniformly continuous in the following sense. For every $\varespilon>0,$ there is a $\delta>0$ and a finite $F\subseteq J$ so that if $(M,\tau)$ is any tracial von Neumann algebra and $x,y\in M^{J}$ with $\|x_{j}-y_{j}\|_{2}<\delta$ for all $j\in F,$ we have $\|f(x)-f(y)\|_{2}<\varepsilon.$ \label{item:unif cont nc func calc}
\item \label{item:naturality of nc func calc}
Suppose $(M_{k},\tau_{k}),k=1,2$ are tracial von Neumann algebras and $x\in \prod_{j\in J}\{a\in M_{1}:\|a\|_{\infty}\leq R_{j}\},$ and that $\Theta\colon M_{1}\to M_{2}$ is a trace-preserving, unital, normal $*$-homomorphism. Then $f(\Theta(x))=\Theta(f(x))$ for all $f\in \mathcal{F}_{R,J,2}.$

\end{enumerate}

\end{thm}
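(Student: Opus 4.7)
The plan is to reduce each part of the theorem to the corresponding self-adjoint statement, established in \cite{FreePinsker} and recalled for $\mathcal{F}_{R,J,\infty}^{(s)}$ just above, via a doubling trick that encodes each non-self-adjoint generator $T_j$ by the pair of self-adjoints $\Re(T_j),\Im(T_j)$.

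Concretely, I would set $J' = J\sqcup J$, write the two copies of $j\in J$ as $(j,1),(j,2)$, and put $R'_{(j,k)} = R_j$. Let $\psi\colon \C\ip{(S_{(j,k)})_{(j,k)\in J'}}\to \C^{*}\ip{(T_{j})_{j\in J}}$ be the unique $*$-homomorphism with $\psi(S_{(j,1)}) = \tfrac{1}{2}(T_j + T_j^{*})$ and $\psi(S_{(j,2)}) = \tfrac{1}{2i}(T_j - T_j^{*})$; since $T_j = \psi(S_{(j,1)}+iS_{(j,2)})$, $\psi$ is surjective. Define $\iota\colon \Sigma_{R,J}\to \Sigma_{R',J'}^{(s)}$ by $\iota(\ell) = \ell\circ \psi$, or equivalently $\iota(\ell_x) = \ell_{(\Re(x_j),\Im(x_j))_{j\in J}}$. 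Then $\iota$ is injective and continuous, and since both spaces are compact Hausdorff, $\iota$ is a homeomorphism onto a closed subset. By the Tietze extension theorem, the pullback $\iota^{*}\colon C(\Sigma_{R',J'}^{(s)})\to C(\Sigma_{R,J})$ is surjective, so $\Psi := \iota^{*}\otimes \psi\colon \mathcal{A}_{R',J'}^{(s)}\to \mathcal{A}_{R,J}$ is a surjective $*$-algebra homomorphism. The crucial compatibility is that for any tracial $(M,\tau)$ and any $x\in M^J$ with $\|x_j\|_\infty\le R_j$, setting $y = (\Re(x_j),\Im(x_j))_{j\in J}\in M_{s.a.}^{J'}$ (which satisfies $\|y_{(j,k)}\|_\infty\le R_j$), one has $\ev_x\circ \Psi = \ev_y$. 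This identity immediately forces $\|\Psi(g)\|_{R,2}\le \|g\|_{R',2}$ and $\|\Psi(g)\|_{R,\infty}\le \|g\|_{R',\infty}$, so $\Psi$ extends to a $*$-algebra contraction $\Psi\colon \mathcal{F}_{R',J',\infty}^{(s)}\to \mathcal{F}_{R,J,\infty}$ (and $\mathcal{F}_{R',J',2}^{(s)}\to \mathcal{F}_{R,J,2}$), still satisfying $\Psi(g)(x) = g(y)$ and with dense image.

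With $\Psi$ in hand the three parts drop out. For (\ref{item:surjective nc func calc}), since $W^{*}(x)=W^{*}(y)$, the self-adjoint surjectivity property gives $\tilde f\in \mathcal{F}_{R',J',\infty}^{(s)}$ with $\tilde f(y)=a$ and $\|\tilde f\|_{R',\infty}\le \|a\|_\infty$, so $f=\Psi(\tilde f)$ works. For (\ref{item:unif cont nc func calc}) I would approximate $f\in \mathcal{F}_{R,J,2}$ within $\varepsilon/3$ in $\|\cdot\|_{R,2}$ by some $\Psi(\tilde f)$, apply the self-adjoint uniform continuity to $\tilde f$ to produce $\delta>0$ and a finite $F'\subseteq J'$, take $F$ to be the image of $F'$ under $J'\to J$, and use $\|\Re(x_j)-\Re(x'_j)\|_2,\,\|\Im(x_j)-\Im(x'_j)\|_2 \le \|x_j-x'_j\|_2$ together with the triangle inequality. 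Part (\ref{item:naturality of nc func calc}) is straightforward on the dense algebraic subspace $\mathcal{A}_{R,J}$, using that trace-preservation of $\Theta$ gives $\ell_{\Theta(x)}=\ell_x$, and then extends to all of $\mathcal{F}_{R,J,2}$ by density together with $\|\cdot\|_{R,2}$-continuity of $\ev_x,\ev_{\Theta(x)}$ and the fact that $\Theta$ is an $L^2$-isometry.

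The hard part is essentially just bookkeeping: verifying that $\Psi$ is well-defined as a $*$-algebra map on the (partially topological, partially algebraic) tensor products and extends continuously to the uniformly tracially complete completions, and confirming the evaluation identity $\ev_x\circ \Psi = \ev_y$. Once these are in place, parts (\ref{item:surjective nc func calc})--(\ref{item:naturality of nc func calc}) transfer mechanically from the self-adjoint theory of \cite{FreePinsker} via standard density arguments.
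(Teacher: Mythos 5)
Your proposal is correct and follows essentially the same route as the paper: both encode each $T_j$ by its real and imaginary parts, build the induced contractive $*$-homomorphism from the self-adjoint function space $\mathcal{F}^{(s)}$ onto (a dense subspace of) $\mathcal{F}_{R,J}$ satisfying $\ev_x\circ\Psi=\ev_y$, and then transfer (i) directly, (ii) by a density argument, and (iii) by checking on $\mathcal{A}_{R,J}$ and extending by $\|\cdot\|_{R,2}$-continuity. Your invocation of Tietze for surjectivity of $\iota^{*}$ and the explicit $\varepsilon/3$ argument in (ii) are only cosmetic variants of the paper's (implicit Stone--Weierstrass density and ``closed and dense'') phrasing.
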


\begin{proof}
Let $K=J\times \{0,1\},$ and define self-adjoint elements of $\C^{*}\ip{(T_{j})_{j\in J}}$ indexed by $K$ as follows:
\[A_{(j,0)}=\frac{T_{j}+T_{j}^{*}}{2},\,\,\, A_{(j,1)}=\frac{T_{j}-T_{j}^{*}}{2i},\]
for all $j\in J.$
Let $\pi\colon \C\ip{(S_{k})_{k\in K}}\to \C^{*}\ip{(T_{j})_{j\in j}}$ be the unique $*$-homomorphism satisfying $\pi(S_{k})=A_{k}$ for all $k\in K.$ Then $\pi$ is surjective. Define a continuous map
$\Psi\colon \Sigma_{R,J}\to \Sigma^{(s)}_{R,K}$
by
\[\Psi(\ell)(P)=\ell(\pi(P)),\]
and let $\widehat{\Psi}\colon C(\Sigma_{R,K}^{(s)})\to C(\Sigma_{R,J})$ be the induced map defined by $\widehat{\Psi}(\phi)=\phi\circ \Psi.$

Suppose $(M,\tau)$ is any tracial von Neumann algebra and $x\in M^{J}$ satisfies $\|x\|_{\infty}\leq R_{j}$ for all $j\in J.$ Define $y\in M^{K}$ by
\[y_{(j,0)}=\frac{x_{j}+x_{j}^{*}}{2},\,\,\, y_{(j,1)}=\frac{x_{j}-x_{j}^{*}}{2i}\]
for all $j\in J.$ Direction calculations show that for all $f\in \mathcal{A}_{R,K}^{(s)}$ we have $\ev_{y}(f)=\ev_{x}[(\widehat{\Psi}\otimes \pi)(f)].$ Indeed, since $\ev_{y},\ev_{x},\widehat{\Psi},$ and $\pi$ are all $*$-homomorphisms, it suffices to check this equation on an element of the form $\phi\otimes S_{k}$ for some $k\in K.$ In this case, the desired equality follows from the fact that $\ev_{y}(S_{k})=y_{k}$, $\ev_{x}(\pi(S_{k}))=y_{k},$ and $\Psi(\ell_{x})=\ell_{y}.$
It follows that for all $f\in \mathcal{A}_{R,K}^{(s)}$ we have
\[\|[\widehat{\Psi}\otimes \pi](f)\|_{R,2}\leq \|f\|_{R,2},\,\,\,\,\|[\widehat{\Psi}\otimes \pi](f)\|_{R,\infty}\leq \|f\|_{R,\infty}.\]
From the above two inequalities it follows that $\widehat{\Psi}\otimes \pi$ uniquely extends to maps, still denoted $\widehat{\Psi}\otimes \pi$, from $\mathcal{F}_{R,K,2}^{(s)}\to \mathcal{F}_{R,J,2}$, $\mathcal{F}_{R,K,\infty}^{(s)}\to \mathcal{F}_{R,J,\infty},$ which are  $\|\cdot\|_{R,2}$--$\|\cdot\|_{R,2},$ $\|\cdot\|_{R,\infty}$--$\|\cdot\|_{R,\infty}$ contractions. Moreover, we still have that $\ev_{x}\circ \widehat{\Psi}\otimes \pi=\ev_{y}.$

(\ref{item:surjective nc func calc}): Given $x\in M^{J},$ let $y\in M_{s.a.}^{K}$ be defined as above. Then by \cite[Proposition 2.4]{FreePinsker} for any $a\in W^{*}(x)=W^{*}(y),$ there is an $g\in \mathcal{F}_{R,K,\infty}^{(s)}$ with $g(y)=a$ and $\|g\|_{R,\infty}\leq \|a\|_{\infty}.$ Set $f=[\widehat{\Psi}\otimes \pi](g),$ then as $\ev_{x}\circ \widehat{\Psi}\otimes \pi=\ev_{y}$ we know $f(x)=a.$ Since $\widehat{\Psi}\otimes \pi$ is $\|\cdot\|_{R,\infty}$--$\|\cdot\|_{R,\infty}$ contractive, it follows that $\|f\|_{R,\infty}\leq\|g\|_{R,\infty}\leq  \|a\|_{\infty}.$

(\ref{item:unif cont nc func calc}):
Let $V$ the set of all $f\in \mathcal{F}_{R,J,2}$ which satisfy the conclusion of (\ref{item:unif cont nc func calc}). As elements of $\mathcal{F}_{R,K,2}^{(s)}$ are uniformly continuous, it follows that $V$ contains $[\widehat{\Psi}\otimes \pi](\mathcal{F}_{R,K,2}^{(s)}).$ In particular, $V$ is dense. It then suffices to show that $V$ is $\|\cdot\|_{R,2}$--closed. For  every $f\in \mathcal{F}_{R,2},$ and every tracial von Neumann algebra $(M,\tau)$ and all $x\in \prod_{j\in J}\{y\in M:\|y\|_{\infty}\leq R_{j}\}$ we have
\[\|f(x)\|_{2}\leq \|f\|_{R,2}.\]
From the above estimate, it is a standard argument to show that $V$ is closed.

(\ref{item:naturality of nc func calc}): First, observe that because $\Theta$ is trace-preserving, we know that $\ell_{\Theta(x)}=\ell_{x}.$ From here, the conclusion is direct to check for the case that $f\in \mathcal{A}_{R,J}.$ For $f\in \mathcal{F}_{R,J,2}$ we have the estimate
\[\max(\|f(\Theta(x))\|_{2},\|f(x)\|_{2})\leq \|f\|_{R,2}.\]
The above estimate allows to deduce the conclusion for a general element of $\mathcal{F}_{R,J,2}$ from the case of elements of $\mathcal{A}_{R,J}$ by approximation.

\end{proof}

Given an index set $J,$ and an $R\in [0,\infty)^{J},$  for any tracial von Neumann algebra $(M,\tau),$ we may abuse notation and view $f$ as a map
\[f\colon \prod_{j\in J}\{a\in M:\|a\|_{\infty}\leq R_{j}\}\to M\]
via $x\mapsto f(x).$ Given another index set $J'$ and $R'\in [0,\infty)^{J'},$ we define
\[\mathcal{F}_{R,R',J,J'}=\{f=(f_{j'})_{j'\in J'}\in (\mathcal{F}_{R,J,\infty})^{J'}:\|f_{j'}\|_{R,\infty}\leq R_{j'}'\mbox{ for all $j'\in J$}\}.\]
Then $f$ also determines a map
\[f\colon \prod_{j\in J}\{a\in M:\|a\|_{\infty}\leq R_{j}\}\to \prod_{j'\in J'}\{a\in M:\|a\|_{\infty}\leq R_{j'}'\mbox{ for all $j'\in J'$}\}\]
by $x\mapsto (f_{j'}(x))_{j'\in J'}.$ In particular, all of this makes sense for $M=M_{k}(\C).$ Given a $\mu\in \Prob(M_{k}(\C)^{J}),$ with
\[\mu\left(\prod_{j\in J}\{a\in M_{k}(\C):\|a\|_{\infty}\leq R_{j}\}\right)=1,\]
we slightly abuse notation and use $f_{*}(\mu)$ for the measure on $\prod_{j'\in J'}\{a\in M_{k}(\C):\|a\|_{\infty}\leq R_{j'}'\mbox{ for all $j'\in J'$}\}$ which is the pushforward of $\mu$ under the map $x\mapsto (f_{j'}(x))_{j'\in J'}$.
A nice consequence of $\|\cdot\|_{2}$-uniform continuity is that taking pushforwards of measures preserves exponential concentration.

\begin{prop}\label{prop:nc pushf preserves conc}
Let $J$,$J'$  be countable index set, and $R\in [0,\infty)^{J}$, $R'\in [0,\infty)^{J'},$ and $f\in \mathcal{F}_{R,R',J,J'}.$ Suppose we are given a sequence $\mu^{(k)}\in \Prob(M_{n(k)}(\C)^{J})$ with
\[\mu^{(k)}\left(\prod_{j\in J}\{A\in M_{n(k)}(\C):\|A\|_{\infty}\leq R_{j}\}\right)=1.\]
If $\mu^{(k)}$ has exponential concentration at scale $n(k)^{2},$ then so does $f_{*}\mu^{(k)}.$
\end{prop}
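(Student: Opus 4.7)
The plan is to show that, when restricted to the common support $E_{k}:=\prod_{j\in J}\{A\in M_{n(k)}(\C):\|A\|_{\infty}\leq R_{j}\}$ of the $\mu^{(k)}$, the map $f\colon M_{n(k)}(\C)^{J}\to M_{n(k)}(\C)^{J'}$ is uniformly continuous between the orbit pseudometrics $d_{F}^{\orb{}}$ and $d_{F'}^{\orb{}}$ with a modulus of continuity that does not depend on $k$. Once such a uniform modulus is in hand, preservation of exponential concentration is a purely formal consequence of the definition. The two inputs are the $\|\cdot\|_{2}$--uniform continuity of each component $f_{j'}\in\mathcal{F}_{R,J,\infty}$ (Theorem \ref{thm:nc func calc properties}(\ref{item:unif cont nc func calc})) and the naturality of the noncommutative functional calculus (Theorem \ref{thm:nc func calc properties}(\ref{item:naturality of nc func calc})), applied to the inner $*$-automorphisms $\Ad(U)\colon M_{n(k)}(\C)\to M_{n(k)}(\C)$, which are trace-preserving, normal and unital.

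The first step is to fix $\varepsilon>0$ and a finite $F'\subseteq J'$, and for each $j'\in F'$ to invoke Theorem \ref{thm:nc func calc properties}(\ref{item:unif cont nc func calc}) applied to $f_{j'}$ to obtain $\delta_{j'}>0$ and a finite $F_{j'}\subseteq J$ so that $\|x_{j}-y_{j}\|_{2}<\delta_{j'}$ for all $j\in F_{j'}$ (with $\|x_{j}\|_{\infty},\|y_{j}\|_{\infty}\leq R_{j}$) forces $\|f_{j'}(x)-f_{j'}(y)\|_{2}<\varepsilon/\sqrt{|F'|}$. Setting $F=\bigcup_{j'\in F'}F_{j'}$ and $\delta=\min_{j'\in F'}\delta_{j'}$ and summing squared estimates over $j'\in F'$ yields the combined statement: for any tracial von Neumann algebra $(M,\tau)$ and $x,y\in\prod_{j\in J}\{a\in M:\|a\|_{\infty}\leq R_{j}\}$ with $\|x_{j}-y_{j}\|_{2}<\delta$ for all $j\in F$, one has $\|f(x)-f(y)\|_{2,F'}<\varepsilon$.

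The second step is to transfer this to the orbit pseudometrics. Naturality applied to $\Ad(U)$ gives $f(UAU^{*})=Uf(A)U^{*}$ for every $U\in\mathcal{U}(n(k))$ and every $A\in E_{k}$. So if $A,B\in E_{k}$ satisfy $d_{F}^{\orb{}}(A,B)<\delta$, one may choose $U\in\mathcal{U}(n(k))$ with $\|UA_{j}U^{*}-B_{j}\|_{2}<\delta$ for every $j\in F$ (note $UAU^{*}\in E_{k}$ since conjugation preserves operator norms), and then the first step applied to $x=UAU^{*}$ and $y=B$ gives
\[
d_{F'}^{\orb{}}(f(A),f(B))\;\leq\;\|Uf(A)U^{*}-f(B)\|_{2,F'}\;=\;\|f(UAU^{*})-f(B)\|_{2,F'}\;<\;\varepsilon.
\]

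The last step assembles the concentration estimate. Since $\mu^{(k)}(E_{k})=1$, for any Borel $S\subseteq M_{n(k)}(\C)^{J'}$ with $f_{*}\mu^{(k)}(S)\geq 1/2$ the set $T:=f^{-1}(S)\cap E_{k}$ satisfies $\mu^{(k)}(T)\geq 1/2$, and by the previous step $f(N_{\delta}(T,d_{F}^{\orb{}})\cap E_{k})\subseteq N_{\varepsilon}(S,d_{F'}^{\orb{}})$. Hence
\[
f_{*}\mu^{(k)}\bigl(N_{\varepsilon}(S,d_{F'}^{\orb{}})\bigr)\;\geq\;\mu^{(k)}\bigl(N_{\delta}(T,d_{F}^{\orb{}})\bigr)\;\geq\;1-\alpha_{\mu^{(k)},d_{F}^{\orb{}}}(\delta),
\]
which upon taking the infimum over $S$ gives $\alpha_{f_{*}\mu^{(k)},d_{F'}^{\orb{}}}(\varepsilon)\leq \alpha_{\mu^{(k)},d_{F}^{\orb{}}}(\delta)$. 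The hypothesis on $\mu^{(k)}$ then yields $\limsup_{k}n(k)^{-2}\log\alpha_{f_{*}\mu^{(k)},d_{F'}^{\orb{}}}(\varepsilon)<0$, as desired. No step presents a real obstacle; the only conceptual point is recognizing that the $L^{2}$-uniform continuity provided by Jekel's calculus passes to the orbit pseudometric precisely because the calculus is equivariant under $\Ad(U)$.
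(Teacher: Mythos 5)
Your proposal is correct and follows essentially the same route as the paper: uniform $\|\cdot\|_{2}$-continuity of $f$ from Theorem \ref{thm:nc func calc properties}(\ref{item:unif cont nc func calc}), equivariance under $\Ad(U)$ from part (\ref{item:naturality of nc func calc}) to pass to the orbit pseudometrics, and then the formal transfer $\alpha_{f_{*}\mu^{(k)},d^{\orb{}}_{F'}}(\varepsilon)\leq\alpha_{\mu^{(k)},d^{\orb{}}_{F}}(\delta)$. Your explicit derivation of the tuple version of uniform continuity (splitting $\varepsilon$ as $\varepsilon/\sqrt{|F'|}$ over the components $f_{j'}$, $j'\in F'$) is a detail the paper elides but is entirely sound.
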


\begin{proof}
Fix a finite $F'\subseteq J',$ and an $\varespilon>0.$ By Theorem \ref{thm:nc func calc properties} (\ref{item:unif cont nc func calc}), we may choose a finite $F\subseteq J$ and a $\delta>0$ so that if $(M,\tau)$ is any tracial von Neumann algebra, and $x,y\in M^{J}$ satisfy $\|x_{j}\|_{\infty}$,$\|y_{j}\|_{\infty}\leq R_{j}$ for all $j\in J$ and $\|x_{j}-y_{j}\|_{2}<\delta$ for all $j\in F,$ then $\|f(x)-f(y)\|_{2,F'}<\varepsilon.$

By Theorem \ref{thm:nc func calc properties} (\ref{item:naturality of nc func calc}) for every $g\in \mathcal{F}_{R,J,\infty}$, every tracial von Neumann algebra $(M,\tau)$, every $u\in \mathcal{U}(M),$ and every $x\in \prod_{j\in J}\{a\in M:\|a\|_{\infty}\leq R_{j}\}$ we have $g(uxu^{*})=ug(x)u^{*}.$ Hence, for all $k\in \N,$ and all $A,B\in \prod_{j\in J}\{C\in M_{k}(\C):\|C\|_{\infty}\leq R_{j}\}$ with $d^{\orb{}}_{F}(A,B)<\delta,$ we have $d^{\orb{}}_{F'}(f(A),f(B))<\varespilon.$

So suppose $\Omega\subseteq M_{n(k)}(\C)^{J'}$ and $f_{*}\mu^{(k)}(\Omega)\geq 1/2.$ Then $\mu^{(k)}(f^{-1}(\Omega))\geq 1/2,$ so \[\mu^{(k)}(N_{\delta}(f^{-1}(\Omega),d^{\orb{}}_{F}))\geq 1-\alpha_{\mu^{(k)},d^{\orb{}}_{F}}(\delta).\]
Our choice of $\delta$ implies that $N_{\delta}(f^{-1}(\Omega),d^{\orb{}}_{F})\subseteq f^{-1}(N_{\varepsilon}(\Omega,d^{\orb{}}_{F'})),$ and thus
\[f_{*}\mu^{(k)}(N_{\varepsilon}(\Omega,d^{\orb{}}_{F'}))\geq 1-\alpha_{\mu^{(k)},d^{\orb{}}_{F}}(\delta).\] So
\[\limsup_{k\to\infty}\frac{1}{n(k)^{2}}\log \alpha_{f_{*}\mu^{(k)},d^{\orb{}}_{F'}}(\varepsilon)\leq \limsup_{k\to\infty}\frac{1}{n(k)^{2}}\log \alpha_{\mu^{(k)},d^{\orb{}}_{F}}(\delta)<0.\]

\end{proof}

We also need the following analogues of \cite[Propostion 2.6 (1) and (2)]{FreePinsker}, whose proofs are identical.

\begin{lem}
Let $J,J'$ be index sets and $R\in [0,\infty)^{J},$ $R'\in [0,\infty)^{J'}.$ Suppose $f\in \mathcal{F}_{R,R',J,J'}.$
\begin{enumerate}[(i)]
\item Suppose $(M_{k},\tau_{k}),k=1,2$ are two tracial von Neumann algebras, and $x_{k}\in \prod_{j\in J}\{a\in M_{k}:\|a\|_{\infty}\leq R_{j}\},k=1,2.$ If $\ell_{x_{1}}=\ell_{x_{2}},$ then $\ell_{f(x_{1})}=\ell_{f(x_{2})}.$ \label{I:well defined law}
\item Define a map $f_{*}\colon \Sigma_{R,J}\to \Sigma_{R',J'}$ as follows. Given $\ell\in \Sigma_{R,J},$ let $\pi_{\ell}$, $W^{*}(\ell)$ be as in (\ref{eqn:GNS rep}),(\ref{eqn:vNa of a law}) and equip $W^{*}(\ell)$ with the trace $\tau_{\ell}$ given by (\ref{eqn:GNS trace}). Set $x=(\pi_{\ell}(T_{j}))_{j\in J},$ and define $f_{*}\ell=\ell_{f(x)}.$ Then $f_{*}$ is weak$^{*}$-weak$^{*}$ continuous.
\label{I:weak* continuity of nc calc}
\end{enumerate}
\end{lem}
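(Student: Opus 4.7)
For (\ref{I:well defined law}), my plan is to deduce this from naturality (Theorem \ref{thm:nc func calc properties} (\ref{item:naturality of nc func calc})). The equality $\ell_{x_1}=\ell_{x_2}$ together with the GNS construction of Section \ref{S:laws} yields a trace-preserving unital $*$-isomorphism $\Theta\colon W^*(x_1)\to W^*(x_2)$ with $\Theta(x_{1,j})=x_{2,j}$ for every $j\in J$, obtained by composing the canonical identifications $W^*(x_k)\cong W^*(\ell_{x_k})$. Applying naturality to $\Theta$ coordinatewise in $f=(f_{j'})_{j'\in J'}$---noting that $f_{j'}(x_k)\in W^*(x_k)$ by Theorem \ref{thm:nc func calc properties} (\ref{item:surjective nc func calc})---yields $\Theta(f(x_1))=f(x_2)$. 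Since $\Theta$ preserves the trace and commutes with $*$-polynomials, $\tau_1(Q(f(x_1)))=\tau_2(Q(f(x_2)))$ for every $*$-polynomial $Q$, giving $\ell_{f(x_1)}=\ell_{f(x_2)}$.

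For (\ref{I:weak* continuity of nc calc}), fix $Q\in\C^{*}\ip{(S_{j'})_{j'\in J'}}$, let $I\subseteq J'$ be the finite set of indices appearing in $Q$, and consider the map $F_Q\colon\Sigma_{R,J}\to\C$ defined by $F_Q(\ell)=(f_*\ell)(Q)=\tau_\ell(Q(f(\pi_\ell(T))))$. The plan is to verify $F_Q$ is weak$^*$-continuous first for $f\in\mathcal{A}_{R,R',J,J'}$ and then pass to general $f$ by uniform approximation. When each $f_{j'}=\sum_\beta \phi_{\beta,j'}\otimes P_{\beta,j'}\in\mathcal{A}_{R,J}$ is a finite sum ($j'\in I$), expanding $Q(f(\pi_\ell T))$ monomial-by-monomial and applying $\tau_\ell$ yields a finite linear combination of products of scalar factors of the form $\phi_{\beta,j'}(\ell)$ and $\ell(\widetilde P)$ for explicit $*$-polynomials $\widetilde P\in\C^{*}\ip{(T_j)_{j\in J}}$; each such factor is weak$^*$-continuous in $\ell\in\Sigma_{R,J}$, so $F_Q$ is weak$^*$-continuous on the nose.

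For general $f\in\mathcal{F}_{R,R',J,J'}$, I would choose $f^{(n)}_{j'}\in\mathcal{A}_{R,J}$ (for $j'\in I$) with $\|f^{(n)}_{j'}-f_{j'}\|_{R,2}\to 0$ and $\sup_n\|f^{(n)}_{j'}\|_{R,\infty}<\infty$. Writing $F_Q^{(n)}$ for the analogous function built from $f^{(n)}$, the multilinear telescoping estimate
\[|\tau(y_1\cdots y_m)-\tau(y_1'\cdots y_m')|\leq \sum_{i=1}^m\left(\prod_{j<i}\|y_j'\|_\infty\right)\|y_i-y_i'\|_2\left(\prod_{j>i}\|y_j\|_\infty\right),\]
applied monomial-by-monomial to $Q$ with $y_i=f_{j'_i}^{\sigma_i}(\pi_\ell T)$ and $y_i'=(f^{(n)}_{j'_i})^{\sigma_i}(\pi_\ell T)$, bounds $|F_Q(\ell)-F_Q^{(n)}(\ell)|$ by a constant (depending on $Q$, $R'$, and the uniform $\|\cdot\|_{R,\infty}$-bound) times $\sum_{j'\in I}\|f_{j'}-f^{(n)}_{j'}\|_{R,2}$, \emph{uniformly} in $\ell\in\Sigma_{R,J}$. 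Thus $F_Q$ is a uniform limit of weak$^*$-continuous functions, hence weak$^*$-continuous. The step I expect to be the main obstacle is producing the approximations $f^{(n)}$ with both $\|\cdot\|_{R,2}$-convergence and $\|\cdot\|_{R,\infty}$-uniform boundedness; this is a Kaplansky-density type statement for the tracially complete $C^*$-algebra $\mathcal{F}_{R,J,\infty}$ of Proposition \ref{prop: its a C*-alg}, and can be handled by combining the $\|\cdot\|_{R,2}$-density of $\mathcal{A}_{R,J}$ in $\mathcal{F}_{R,J,2}$ with a continuous functional calculus truncation inside $\mathcal{F}_{R,J,\infty}$.
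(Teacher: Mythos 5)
Your argument for (\ref{I:well defined law}) is correct and is the intended one: GNS uniqueness produces the trace-preserving normal $*$-isomorphism $\Theta\colon W^{*}(x_{1})\to W^{*}(x_{2})$ carrying $x_{1}$ to $x_{2}$, and Theorem \ref{thm:nc func calc properties} (\ref{item:naturality of nc func calc}) transports $f$ along it. (The paper gives no written proof here, deferring to the self-adjoint case in \cite[Proposition 2.6]{FreePinsker}; your argument is that argument.)

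For (\ref{I:weak* continuity of nc calc}) your skeleton (exact computation on $\mathcal{A}_{R,J}$, then uniform approximation) is right and the telescoping estimate is valid, but the route you chose forces you through exactly the step you flag as the obstacle: producing $\|\cdot\|_{R,2}$-approximants in $\mathcal{A}_{R,J}$ that are uniformly $\|\cdot\|_{R,\infty}$-bounded. That Kaplansky-type statement is true but costs real work (reduction to self-adjoints, truncation by a Lipschitz function, the inequality $\|\chi(a)-\chi(b)\|_{2}\leq\|a-b\|_{2}$ for $1$-Lipschitz $\chi$, and a further polynomial approximation to land back in $\mathcal{A}_{R,J}$), and it can be bypassed entirely. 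For a monomial $Q=S_{j_{1}'}^{\sigma_{1}}\cdots S_{j_{m}'}^{\sigma_{m}}$, first collapse the product inside the algebra: $g=f_{j_{1}'}^{\sigma_{1}}\cdots f_{j_{m}'}^{\sigma_{m}}\in\mathcal{F}_{R,J,\infty}\subseteq\mathcal{F}_{R,J,2}$ by Proposition \ref{prop: its a C*-alg}, and $(f_{*}\ell)(Q)=\tau_{\ell}(g(\pi_{\ell}(T)))$ since $\ev_{x}$ is multiplicative. Then for \emph{any} $g_{n}\in\mathcal{A}_{R,J}$ with $\|g_{n}-g\|_{R,2}\to 0$ (no operator-norm control required), Cauchy--Schwarz against $1$ gives
\[|\tau_{\ell}(g(\pi_{\ell}T))-\tau_{\ell}(g_{n}(\pi_{\ell}T))|\leq \|g-g_{n}\|_{R,2}\]
uniformly in $\ell$, so $F_{Q}$ is a uniform limit of the manifestly weak$^{*}$-continuous functions attached to elements of $\mathcal{A}_{R,J}$. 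Because only one factor is approximated, no $\infty$-norm bounds on the approximants ever enter. Your version is not wrong, but it buys nothing for the extra difficulty; either switch to this one-factor reduction or write out the bounded-approximation lemma in full rather than leaving it as a sketch.
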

Recall that the point of the construction in (\ref{eqn:GNS rep}) was that $\ell=\ell_{x}.$
So by (\ref{I:well defined law}), for any tracial von Neumann algebra $(M,\tau),$ and any $y\in M^{J}$ with $\ell_{y}=\ell,$ we have $f_{*}\ell=\ell_{f(y)}.$ So, for example, if we have a sequence $(x_{n})_{n}$ of tuples in tracial von Neumann algebras and if $\ell_{x_{n}}\to^{\textnormal{weak}^{*}}_{n\to\infty} \ell_{x}$ for some other tuple $x,$ then by (\ref{I:weak* continuity of nc calc}) we know $\ell_{f(x_{n})}\to^{\textnormal{weak}^{*}}_{n\to\infty} \ell_{f(x)}$ provide $x_{n},x$ satisfy the appropriate norm bounds to define $f(x_{n}),f(x).$

We also need a simple consequence of the above, which is that microstates behave well with respect to the noncommutative function spaces $\mathcal{F}_{R,R',J,J'}.$ The proof is the same as in \cite[Corollary 2.7]{FreePinsker}.

\begin{lem}
Let $J,J'$ be index sets, and $R\in [0,\infty)^{J}$ and $R'\in [0,\infty)^{J'}.$ Fix an $f\in \mathcal{F}_{R,R',J,J'},$ a tracial von Neumann algebra $(M,\tau)$ and $x\in \prod_{j\in J}\{a\in M:\|a\|_{\infty}\leq R_{j}\}.$ Then for any weak$^{*}$-neighborhood $\mathcal{V}$ of $\ell_{f(x),x}$ in $\Sigma_{R'\sqcup R,J'\sqcup J}$ there is a weak$^{*}$-neighborhood $\mathcal{O}$ of $\ell_{x}$ so that
\[f(\Gamma_{R}^{(k)}(\mathcal{O}))\subseteq \Gamma_{R'\sqcup R}^{(k)}(f(x):\mathcal{V}).\]

\end{lem}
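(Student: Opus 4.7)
The plan is to reduce this to the weak$^*$-continuity statement in the preceding lemma by bundling $f$ together with the identity coordinates. Specifically, I would consider the enlarged tuple $g = (f, \operatorname{id}) \in \mathcal{F}_{R, R \sqcup R', J, J' \sqcup J}$, where the $J'$ coordinates are the components of $f$ and the $J$ coordinates are given by the generators $T_j \in \C^*\langle (T_j)_{j\in J}\rangle \subseteq \mathcal{F}_{R,J,\infty}$ (which satisfy $\|T_j\|_{R,\infty} \le R_j$ by construction). For any tracial von Neumann algebra $(N,\sigma)$ and any $y \in \prod_{j \in J} \{a \in N: \|a\|_\infty \le R_j\}$, we have $g(y) = (f(y), y)$, and consequently $g_*\ell_y = \ell_{(f(y),y)}$.

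Next I would invoke part (\ref{I:weak* continuity of nc calc}) of the preceding lemma applied to $g$: the map $g_* : \Sigma_{R,J} \to \Sigma_{R' \sqcup R, J' \sqcup J}$ is weak$^*$-weak$^*$ continuous, and sends $\ell_x$ to $\ell_{(f(x),x)} = \ell_{f(x),x}$. Therefore, given any weak$^*$-neighborhood $\mathcal{V}$ of $\ell_{f(x),x}$ in $\Sigma_{R' \sqcup R, J' \sqcup J}$, the preimage $\mathcal{O} := g_*^{-1}(\mathcal{V})$ is a weak$^*$-neighborhood of $\ell_x$ in $\Sigma_{R,J}$.

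Finally, I would verify that this $\mathcal{O}$ works. Suppose $A \in \Gamma_R^{(k)}(\mathcal{O})$, so $\|A_j\|_\infty \le R_j$ for every $j \in J$ and $\ell_A \in \mathcal{O}$. Applying the construction concretely in $(M_k(\C), \mathrm{tr})$ to the tuple $A$, we obtain $g(A) = (f(A), A)$ with $\ell_{f(A),A} = g_*\ell_A \in \mathcal{V}$. The norm bounds $\|f_{j'}(A)\|_\infty \le R'_{j'}$ are automatic from the definition of $\mathcal{F}_{R,R',J,J'}$, and the norm bounds on the $A$-coordinates are given. Thus $B := A$ witnesses $f(A) \in \Gamma_{R' \sqcup R}^{(k)}(f(x):\mathcal{V})$, proving the inclusion.

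There is no serious obstacle here; the entire content of the statement is packaged into the naturality of the noncommutative functional calculus and the weak$^*$-continuity of pushforwards of laws, both of which are already established. The only bookkeeping point that needs care is ensuring that the bundled map $(f,\operatorname{id})$ genuinely lies in the appropriate function space $\mathcal{F}_{R, R \sqcup R', J, J' \sqcup J}$, which follows because the generating variables $T_j$ satisfy $\|T_j\|_{R,\infty} \le R_j$ and by hypothesis $\|f_{j'}\|_{R,\infty} \le R'_{j'}$.
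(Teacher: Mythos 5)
Your argument is correct and is essentially the proof the paper intends: the lemma's proof is cited to \cite[Corollary 2.7]{FreePinsker}, and that proof is exactly this pullback of $\mathcal{V}$ under the weak$^{*}$-continuous pushforward by the bundled map $(f,\operatorname{id})$, followed by the observation that evaluating the bundle on a matrix tuple $A$ produces the witness $B=A$. The only cosmetic issue is the ordering of the subscripts (the bundled map lies in $\mathcal{F}_{R,R'\sqcup R,J,J'\sqcup J}$, matching $J'\sqcup J$), which does not affect the argument.
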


\section{Proofs of the main theorems} \label{S:main theorems}

\subsection{Microstates Collapse and the proof of Theorem \ref{T:more general main theorem intro} (\ref{item:microstates collapse intro})} \label{subsec:microstates collapse}
 Intuitively, Theorem \ref{T:more general main theorem intro} (\ref{item:microstates collapse intro}) asserts that if we sample microstates for $M$ according to the sequence of measures $\mu^{(k)},$ and use them to ``induce" (via the function $f$) microstates for $N,$ then ``most" of these microstates for $N$ are unitarily conjugate to each other. This will be proved in a manner entirely similar to the proof of \cite[Proposition 3.3]{FreePinsker}, with only minor changes in place to take care of the fact that we are dealing with unitary conjugation orbits of microstates instead of relative microstates as in \cite[Section 3.3]{FreePinsker}.

\begin{lem}\label{L:concentration expansion}
Let $(X,\mu,d)$ be a pseudometric measure space. If $\Omega\subseteq X$ and $\mu(\Omega)> \alpha_{\mu,d}(\varepsilon)$, then
\[\mu(N_{2\varepsilon}(\Omega,d))\geq 1-\alpha_{\mu,d}(\varespilon).\]
\end{lem}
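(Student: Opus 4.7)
The plan is to use the standard two-step unfolding of the concentration function. Writing $\alpha = \alpha_{\mu,d}(\varepsilon)$ for brevity, I will first establish the intermediate inequality $\mu(N_\varepsilon(\Omega,d)) \geq 1/2$, and then apply the definition of $\alpha$ to the Borel set $E := N_\varepsilon(\Omega,d)$ together with the triangle-inequality inclusion $N_\varepsilon(N_\varepsilon(\Omega,d),d) \subseteq N_{2\varepsilon}(\Omega,d)$.

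For the intermediate step, I would argue by contradiction. Suppose $\mu(N_\varepsilon(\Omega,d)) < 1/2$, so that the Borel set $F := N_\varepsilon(\Omega,d)^c$ satisfies $\mu(F) > 1/2$. Applying the concentration function to $F$ gives $\mu(N_\varepsilon(F,d)) \geq 1 - \alpha$. On the other hand, every point $y \in F$ is by definition at distance at least $\varepsilon$ from $\Omega$, so no point of $\Omega$ can lie within $\varepsilon$ of any point of $F$; equivalently $\Omega \cap N_\varepsilon(F,d) = \varnothing$. This forces $\mu(\Omega) \leq \mu(N_\varepsilon(F,d)^c) \leq \alpha$, contradicting the hypothesis $\mu(\Omega) > \alpha$. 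Hence $\mu(N_\varepsilon(\Omega,d)) \geq 1/2$ as needed.

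Once this is in hand, the main inequality follows immediately: the set $E = N_\varepsilon(\Omega,d)$ is Borel (in fact open) and has $\mu(E) \geq 1/2$, so the definition of $\alpha$ yields
\[
\mu(N_\varepsilon(E,d)) \;\geq\; 1-\alpha.
\]
Since $N_\varepsilon(E,d) = N_\varepsilon(N_\varepsilon(\Omega,d),d) \subseteq N_{2\varepsilon}(\Omega,d)$ by the triangle inequality, we conclude $\mu(N_{2\varepsilon}(\Omega,d)) \geq 1-\alpha$, which is the claim.

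The only conceptual obstacle is the dichotomy in the first step; the rest is routine manipulation of neighborhoods. I do not expect any measurability issues, since $N_\varepsilon(\cdot,d)$ is open whenever $d$ is continuous, and I have only used openness of the various neighborhoods together with the hypothesis that $\Omega$ is a subset with well-defined measure $\mu(\Omega) > \alpha$ (so $\Omega$ should be taken Borel, which the lemma should tacitly assume).
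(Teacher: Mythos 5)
Your proof is correct and is essentially the paper's own argument: both set $F=N_{\varepsilon}(\Omega,d)^{c}$, use the disjointness $\Omega\cap N_{\varepsilon}(F,d)=\varnothing$ together with the concentration property to force $\mu(F)<1/2$, and then apply concentration once more to $N_{\varepsilon}(\Omega,d)$ with the inclusion $N_{\varepsilon}(N_{\varepsilon}(\Omega,d),d)\subseteq N_{2\varepsilon}(\Omega,d)$. The only cosmetic difference is that you phrase the intermediate step as a contradiction while the paper states it directly; your measurability remark is a reasonable (and harmless) addition.
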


\begin{proof}
Suppose $\mu(\Omega)> \alpha_{\mu,d}(\varepsilon),$ and set $\Theta=N_{\varepsilon}(\Omega,d)^{c}.$ Then
\[\mu(N_{\varepsilon}(\Theta)^{c})\geq \mu(\Omega)>\alpha_{\mu,d}(\varepsilon).\]
The definition of $\alpha$ implies that $\mu(\Theta)<1/2.$ So $\mu(N_{\varepsilon}(\Omega,d))>1/2,$ and this in turn implies that
\[\mu(N_{2\varepsilon}(\Omega,d))\geq 1-\alpha_{\mu,d}(\varepsilon).\]
\end{proof}

It will frequently be useful to note the following facts about sequences of measures which are asymptotically concentrated on microstates spaces and have exponential concentration.

\begin{lem}\label{lem: reduction lemma}
Let $(M,\tau)$ be a tracial von Neumann algebra, $I$ an index set, $R\in [0,\infty)^{I}$ and $x\in \prod_{i\in I}\{a\in M:\|a\|_{\infty}\leq R_{i}\}.$ Assume we are given integers $n(k)$ for $k\in \N$ with $n(k)\to\infty$ and $\mu^{(k)}\in \Prob(M_{k}(\C)^{J})$ with
\[\mu^{(k)}(\Gamma_{R}^{(n(k))}(\mathcal{O}))\to 1\]
for all weak$^{*}$ neighborhoods $\mathcal{O}$ of $\ell_{x}$ in $\Sigma_{R,I}$. Further assume that  $\mu^{(k)}$ has exponential concentration with scale $n(k)^{2}$.
\begin{enumerate}[(i)]
\item \label{item:fixing norm bounds}
 Assume that
\[\lim_{k\to\infty}\mu^{(k)}\left(\prod_{i\in I}\{A\in M_{n(k)}(\C):\|A\|_{\infty}\leq R_{i}\}\right)^{\frac{1}{n(k)}^{2}}= 1.\]
Define $\nu^{(k)}\in \Prob(M_{n(k)}(\C)^{I})$ by
\[\nu^{(k)}(E)=\frac{\mu^{(k)}\left(E\cap \prod_{i\in I}\{A\in M_{n(k)}(\C):\|A\|_{\infty}\leq R_{i}\}\right)}{\mu^{(k)}\left(\prod_{i\in I}\{A\in M_{n(k)}(\C):\|A\|_{\infty}\leq R_{i}\}\right)}.\]
Then $\nu^{(k)}$ still has exponential concentration with scale $n(k)^{2}$.

\item \label{item: auto exp decay}
Assume that
\[\lim_{k\to\infty}\mu^{(k)}\left(\prod_{i\in I}\{A\in M_{n(k)}(\C):\|A\|_{\infty}\leq R_{i}\}\right)= 1,\]
and define $\nu^{(k)}$ as in (\ref{item:fixing norm bounds}).
For every weak$^{*}$ neighborhood $\mathcal{O}$ of $\ell_{x}$ in $\Sigma_{R,I}$ we have
\[\limsup_{k\to\infty}\frac{1}{n(k)^{2}}\log \nu^{(k)}(\Gamma_{R}^{(n(k))}(\mathcal{O})^{c})<0.\]
\end{enumerate}

\end{lem}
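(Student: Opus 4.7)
Write $Z_k = \prod_{i\in I}\{A\in M_{n(k)}(\C):\|A\|_{\infty}\leq R_{i}\}$, so that $\nu^{(k)}$ is $\mu^{(k)}$ conditioned on the unitary-conjugation invariant set $Z_k$. The common idea in both parts is that although the exponential concentration of $\mu^{(k)}$ may be lost upon restriction, it is only lost by a factor of $\mu^{(k)}(Z_k)$, which by hypothesis does not decay exponentially at scale $n(k)^2$.

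For (i), fix a finite $F\subseteq I$ and $\varepsilon>0$, and let $E\subseteq M_{n(k)}(\C)^{I}$ satisfy $\nu^{(k)}(E)\geq 1/2$; then $\mu^{(k)}(E\cap Z_k)\geq \mu^{(k)}(Z_k)/2$. By assumption $\frac{1}{n(k)^2}\log \mu^{(k)}(Z_k)\to 0$, whereas $\alpha_{\mu^{(k)},d_F^{\orb{}}}(\varepsilon)$ decays exponentially at scale $n(k)^2$, so for $k$ large we have $\mu^{(k)}(E\cap Z_k)>\alpha_{\mu^{(k)},d_F^{\orb{}}}(\varepsilon)$. Lemma \ref{L:concentration expansion} then yields
\[
\mu^{(k)}\bigl(N_{2\varepsilon}(E\cap Z_k,d_F^{\orb{}})\bigr)\geq 1-\alpha_{\mu^{(k)},d_F^{\orb{}}}(\varepsilon).
\]
Intersecting with $Z_k$, dividing by $\mu^{(k)}(Z_k)$, and using $E\cap Z_k\subseteq E$, we obtain
\[
\alpha_{\nu^{(k)},d_F^{\orb{}}}(2\varepsilon)\leq \frac{\alpha_{\mu^{(k)},d_F^{\orb{}}}(\varepsilon)}{\mu^{(k)}(Z_k)}.
\]
Applying $\frac{1}{n(k)^2}\log$ and the two assumptions gives $\limsup<0$.

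For (ii), I first need a $\|\cdot\|_{2,F}$-soft version of a weak$^{*}$-neighborhood. Given a basic weak$^{*}$-neighborhood $\mathcal{O}$ of $\ell_{x}$, say $\{\ell:|\ell(P_j)-\ell_x(P_j)|<\delta,\ j=1,\ldots,n\}$, let $F\subseteq I$ be the finite set of indices appearing in the $P_j$. A standard multilinear Lipschitz estimate for polynomial evaluation on operators with $\|\cdot\|_{\infty}\leq R_{i}$ produces an $\varepsilon>0$ and a smaller weak$^{*}$-neighborhood $\mathcal{O}'$ of $\ell_x$ so that $A,B\in Z_k$, $\ell_B\in\mathcal{O}'$, $\|A-B\|_{2,F}<2\varepsilon$ force $\ell_A\in\mathcal{O}$. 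Since $\ell_A$ is unitarily invariant, this translates to
\[
N_{\varepsilon}\bigl(\Gamma_{R}^{(n(k))}(\mathcal{O}')\cap Z_k,\,d_F^{\orb{}}\bigr)\cap Z_k\subseteq \Gamma_{R}^{(n(k))}(\mathcal{O}).
\]
By hypothesis $\mu^{(k)}(\Gamma_{R}^{(n(k))}(\mathcal{O}')\cap Z_k)\to 1$, so eventually it exceeds both $1/2$ and $\alpha_{\mu^{(k)},d_F^{\orb{}}}(\varepsilon)$. Applying Lemma \ref{L:concentration expansion} bounds the $\mu^{(k)}$-measure of the complement of the $\varepsilon$-neighborhood by $\alpha_{\mu^{(k)},d_F^{\orb{}}}(\varepsilon)$, and dividing by $\mu^{(k)}(Z_k)\to 1$ transfers the exponential decay to $\nu^{(k)}(\Gamma_{R}^{(n(k))}(\mathcal{O})^c)$.

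The only genuine step is the $\|\cdot\|_{2,F}$-uniform continuity of the law map on $Z_k$ used in part (ii), which is a routine polynomial estimate. Everything else is bookkeeping around Lemma \ref{L:concentration expansion} and the two norm-bound hypotheses, and I do not anticipate a real obstacle.
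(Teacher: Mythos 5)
Your argument is correct and follows the paper's proof essentially verbatim: part (i) is the same application of Lemma \ref{L:concentration expansion} plus the observation that conditioning on the operator-norm ball only costs a factor of $\mu^{(k)}(Z_k)$, which is subexponential at scale $n(k)^2$; part (ii) is the same thickening of a weak$^{*}$-neighborhood to a $d^{\orb}_F$-neighborhood inside $Z_k$ (the paper likewise asserts the polynomial Lipschitz estimate without proof), followed by the concentration lemma and division by $\mu^{(k)}(Z_k)\to 1$. The only cosmetic discrepancy is the $\varepsilon$ versus $2\varepsilon$ bookkeeping around Lemma \ref{L:concentration expansion}, which your setup already absorbs.
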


\begin{proof}

(\ref{item:fixing norm bounds}): To see that $\nu^{(k)}$ still exhibits exponential concentration fix $\varespilon>0$, and suppose that $E_{k}\subseteq M_{n(k)}(\C)^{J}$ has $\nu^{(k)}(E_{k})\geq \frac{1}{2}$ for all $k.$ Then
\[\mu^{(k)}(E_{k})\geq \frac{1}{2}\mu^{(k)}\left(\prod_{i\in I}\{A\in M_{n(k)}(\C):\|A\|_{\infty}\leq R_{i}\}\right).\]
By our assumptions and Lemma \ref{L:concentration expansion}, we have
\[\mu^{(k)}(N_{2\varepsilon}(E_{k},d^{\orb{}}_{F})^{c})\leq \alpha_{\mu^{(k)},d^{\orb{}}_{F}}(\varepsilon)\]
for all large $k.$ But then for all large $k$ we have
\[\nu^{(k)}(N_{2\varepsilon}(E_{k},d^{\orb{}}_{F})^{c})\leq \frac{\alpha_{\mu^{(k)},d^{\orb{}}_{F}}(\varepsilon)}{\mu^{(k)}\left(\prod_{i\in I}\{A\in M_{n(k)}(\C):\|A\|_{\infty}\leq R_{i}\}\right)}.\]
Hence
\[\alpha_{\nu^{(k)},d^{\orb{}}_{F}}(2\varepsilon)\leq \frac{\alpha_{\mu^{(k)},d^{\orb{}}_{F}}(\varepsilon)}{\mu^{(k)}\left(\prod_{i\in I}\{A\in M_{n(k)}(\C):\|A\|_{\infty}\leq R_{i}\}\right)}\]
for all large $k.$
This estimate and our hypotheses on $\mu^{(k)}\left(\prod_{i\in I}\{A\in M_{n(k)}(\C):\|A\|_{\infty}\leq R_{i}\}\right)$ are enough to show that $\nu^{(k)}$ still has exponential concentration with scale $n(k)^{2}$.

(\ref{item: auto exp decay}): We may choose a weak$^{*}$-neighborhood $\mathcal{V}$ of the law of $x,$ a $\delta>0$ and a finite subset $F\subseteq I$ so that
\[N_{\delta}(\Gamma_{R}^{n(k)}(\mathcal{V}),\|\cdot\|_{2,F})\cap \prod_{i\in I}\{A\in M_{n(k)}(\C):\|A\|_{\infty}\leq R_{i}\}\subseteq \Gamma_{R}^{n(k)}(\mathcal{O})\]
for all $k\in \N.$ Since $\Gamma_{R}^{n(k)}(\mathcal{V})$ is conjugation invariant, it follows that
\[N_{\delta}(\Gamma_{R}^{n(k)}(\mathcal{V}),d^{\orb{}}_{F})\cap \prod_{i\in I}\{A\in M_{n(k)}(\C):\|A\|_{\infty}\leq R_{i}\}\subseteq \Gamma_{R}^{n(k)}(\mathcal{O}).\]
Our assumptions on $\nu^{(k)}$ guarantee that for all large $k$, we have  $\nu^{(k)}(\Gamma_{R}^{n(k)}(\mathcal{V}))\geq \frac{1}{2}.$ So
\[\nu^{(k)}(\Gamma_{R}^{n(k)}(\mathcal{O})^{c})\leq \alpha_{\nu^{(k)},d^{\orb}_{F}}(\delta)\]
for all large $k.$ Taking $\frac{1}{n(k)^{2}}\log$ of both sides and letting $k\to\infty$ completes the proof, by
(\ref{item:fixing norm bounds})

\end{proof}

We will deduce Theorem \ref{T:more general main theorem intro} (\ref{item:microstates collapse intro}), as a consequence of the following more general result.

\begin{thm}\label{thm:microstates collapse generalization}
Let $(M,\tau)$ be a tracial von Neumann algebra and $N\leq M$ with $h(N:M)=0.$ Fix index sets $I,J$ with $J$ countable, and let $x\in M^{I}$, $y\in N^{J}$ be given.  Suppose that $\widehat{R}\in [0,\infty)^{I\sqcup J}$ with $\|x_{i}\|_{\infty}\leq \widehat{R}_{i}$ for all $i\in I,$ and $\|y_{j}\|_{\infty}\leq \widehat{R}_{j}$ for all $j\in J$ and so that $M=W^{*}(x).$ Set $R=\widehat{R}\big|_{I},$ $R'=\widehat{R}\big|_{J}.$ Write $y=f(x)$ for some $f\in \mathcal{F}_{R,R',I,J}.$

Assume that $n(k)\in \N$ is a sequence of integers with $n(k)\to \infty,$ and that $\mu^{(k)}\in \Prob(M_{n(k)}(\C)^{I})$ satisfies
\[\mu^{(k)}(\Gamma_{R}^{(n(k))}(\mathcal{O}))\to_{k\to\infty}1\]
for every weak$^{*}$-neighborhood $\mathcal{O}$ of $\ell_{x}$ in $\Sigma_{R,I}$. Further assume that
\[\lim_{k\to\infty}\mu^{(k)}\left(\prod_{i\in I}\{A\in M_{k}(\C):\|A\|_{\infty}\leq R_{i}\}\right)=1.\]
If $(\mu^{(k)})_{k}$ has exponential concentration at scale $n(k)^{2},$ then there is a sequence $\Omega_{k}\subseteq \prod_{i\in I}\{C\in M_{k}(\C):\|C\|_{\infty}\leq R_{i}\}$ with the following properties:
\begin{itemize}
    \item $\mu^{(k)}(\Omega_{k})\to 1,$
    \item  for every weak$^{*}$-neighborhood $\mathcal{O}$ of $\ell_{x}$ we have $\Omega_{k}\subseteq\Gamma_{R}^{(n(k))}(\mathcal{O})$ for all sufficiently large $k,$
    \item for every finite $F\subseteq J$
    \[\lim_{k\to\infty}\sup_{A_{1},A_{2}\in \Omega_{k}}d^{\orb{}}_{F}(f(A_{1}),f(A_{2}))=0.\]
    \end{itemize}

\end{thm}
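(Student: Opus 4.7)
Since $M=W^{*}(x)$ and $y=f(x)$ generates $N$, we have $h(y:x)=h(N:M)=0$, so for any $\varepsilon>0$ and finite $F\subseteq J$ there exist weak$^{*}$-neighborhoods $\mathcal{O}$ of $\ell_{y,x}$ whose relative microstates spaces $\Gamma_{\widehat{R}}^{(n(k))}(y:\mathcal{O})$ are covered by subexponentially many $d^{\orb{}}_{F}$-balls. I will combine this subexponential covering with the exponential concentration of $\mu^{(k)}$, the $L^{2}$-uniform continuity of $f$ from Theorem \ref{thm:nc func calc properties}(\ref{item:unif cont nc func calc}), and the unitary equivariance $f(UAU^{*})=Uf(A)U^{*}$ coming from Theorem \ref{thm:nc func calc properties}(\ref{item:naturality of nc func calc}), following the blueprint of \cite[Proposition 3.3]{FreePinsker} but now phrased in terms of unitary orbits on the $I$-side rather than relative microstates. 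A diagonal argument over the countable set $J$ will then assemble the per-$(F,\varepsilon)$ conclusions into the uniform set $\Omega_{k}$.

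\textbf{Core step for a single $(F,\varepsilon)$.} First use Theorem \ref{thm:nc func calc properties}(\ref{item:unif cont nc func calc}) to pick a finite $F_{I}\subseteq I$ and $\eta>0$ so that $\|A-B\|_{2,F_{I}}<\eta$ (in the norm-bounded region) forces $\|f(A)-f(B)\|_{2,F}<\varepsilon/3$. Exponential concentration of $\mu^{(k)}$ along $d^{\orb{}}_{F_{I}}$ yields $c>0$ with $\alpha_{\mu^{(k)},d^{\orb{}}_{F_{I}}}(\eta/2)\leq\exp(-cn(k)^{2})$ eventually. Fix $\delta\in(0,c)$, and from $h(y:x)\leq 0$ select a weak$^{*}$-neighborhood $\mathcal{O}$ of $\ell_{y,x}$ with $K_{\varepsilon/3}(\Gamma_{\widehat{R}}^{(n(k))}(y:\mathcal{O}),d^{\orb{}}_{F})\leq\exp(\delta n(k)^{2})$ eventually. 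The lemma at the end of Section \ref{S:nc func calc} (weak$^{*}$-continuity of $f_{*}$) combined with the microstates hypothesis on $\mu^{(k)}$ gives $\mu^{(k)}(\{A:f(A)\in\Gamma_{\widehat{R}}^{(n(k))}(y:\mathcal{O})\})\to 1$. Cover $\Gamma_{\widehat{R}}^{(n(k))}(y:\mathcal{O})$ by at most $\exp(\delta n(k)^{2})$ balls $B_{j}$ of $d^{\orb{}}_{F}$-radius $\varepsilon/3$; pigeonhole furnishes an index $j_{k}$ with $\mu^{(k)}(f^{-1}(B_{j_{k}}))\geq\exp(-\delta n(k)^{2})(1-o(1))$, which eventually exceeds $\alpha_{\mu^{(k)},d^{\orb{}}_{F_{I}}}(\eta/2)$. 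Lemma \ref{L:concentration expansion} upgrades this to $\mu^{(k)}(N_{\eta}(f^{-1}(B_{j_{k}}),d^{\orb{}}_{F_{I}}))\to 1$. For any $A_{1},A_{2}$ in this neighborhood, choose $A'_{\ell}\in f^{-1}(B_{j_{k}})$ and unitaries $U_{\ell}$ with $\|A_{\ell}-U_{\ell}A'_{\ell}U_{\ell}^{*}\|_{2,F_{I}}<\eta$; uniform continuity yields $\|f(A_{\ell})-U_{\ell}f(A'_{\ell})U_{\ell}^{*}\|_{2,F}<\varepsilon/3$, and since $f(A'_{1}),f(A'_{2})\in B_{j_{k}}$, orbit-invariance of $d^{\orb{}}_{F}$ and the triangle inequality give $d^{\orb{}}_{F}(f(A_{1}),f(A_{2}))<4\varepsilon/3$.

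\textbf{Diagonal argument and main obstacle.} Enumerate an exhaustion $F_{1}\subseteq F_{2}\subseteq\cdots$ of finite subsets of $J$, a sequence $\varepsilon_{n}\searrow 0$, and a weak$^{*}$-neighborhood basis $(\mathcal{O}_{n})$ of $\ell_{x}$; apply the core step with $(F_{n},\varepsilon_{n})$ and additionally intersect the resulting set with $\Gamma_{R}^{(n(k))}(\mathcal{O}_{n})$ (permissible because $\mu^{(k)}(\Gamma_{R}^{(n(k))}(\mathcal{O}_{n}))\to 1$) to force the second stated property. A standard diagonal then produces a single $\Omega_{k}$ satisfying all three conclusions. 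The most delicate point is the chain of dependencies: one must first fix $(\varepsilon,F)$, then extract $(F_{I},\eta)$ from the uniform continuity modulus of $f$, then the concentration rate $c=c(\eta/2,F_{I})$, then $\delta<c$, and only finally select the neighborhood $\mathcal{O}$ witnessing the $1$-bounded entropy bound; once this order is respected, everything else reduces to the tools already established in Sections \ref{S:nc func calc} and \ref{S:measures}.
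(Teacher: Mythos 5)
Your proof is correct and follows the same architecture as the paper's: a per-$(F,\varepsilon)$ claim obtained by combining the subexponential covering number supplied by $h(y:x)\leq 0$ with exponential concentration and the $\|\cdot\|_{2}$-uniform continuity and unitary equivariance of $f$, followed by a diagonal argument over a countable exhaustion of $J$ and a neighborhood basis of $\ell_{x}$. The only substantive difference is in the counting step: the paper pushes the measure forward to $\nu^{(k)}=f_{*}\mu^{(k)}$ (which inherits exponential concentration by Proposition \ref{prop:nc pushf preserves conc}) and works with the set $\Theta_{k}$ of points of $\Gamma_{\widehat{R}}^{(n(k))}(y:\mathcal{V})$ whose $2\varepsilon$-neighborhoods are not exponentially small, showing the complementary set is negligible; you instead pigeonhole upstairs to find a single ball of the cover whose $f$-preimage is not exponentially small and then expand that preimage. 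These are two faces of the same concentration-versus-covering argument, and your version effectively inlines the proof of Proposition \ref{prop:nc pushf preserves conc}. One point you gloss over and should make explicit: as in the paper, first invoke Lemma \ref{lem: reduction lemma} (or systematically intersect with $\prod_{i\in I}\{A:\|A\|_{\infty}\leq R_{i}\}$, whose measure tends to $1$) so that $f(A)$ is defined and the uniform-continuity estimate is legitimately applicable at every point where you use it, in particular to the pair $A_{\ell}$, $U_{\ell}A_{\ell}'U_{\ell}^{*}$.
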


\begin{proof}

By Lemma \ref{lem: reduction lemma} we may, and will, assume that
\[\mu^{(k)}\left(\prod_{i\in I}\{A\in M_{n(k)}(\C):\|A\|_{\infty}\leq R_{i
}\}\right)=1.\]

We first start with the following claim.

\emph{Claim: For every finite $F\subseteq J,$ for every $\varepsilon>0,$ and for every weak$^{*}$-neighborhood $\mathcal{O}$ of $\ell_{x},$ there is a sequence $\Omega_{k}\subseteq \Gamma_{R}^{(n(k))}(\mathcal{O})$ (depending upon $\varepsilon,F,\mathcal{O}$) satisfying}
\begin{itemize}
    \item $\lim_{k\to\infty}\mu^{(k)}(\Omega_{k})=1,$ and
    \item $\limsup_{k\to\infty}\sup_{A_{1},A_{2}\in \Omega_{k}}d^{\orb{}}_{F}(f(A_{1}),f(A_{2}))\leq \varepsilon.$
\end{itemize}

To prove the claim, let $\nu^{(k)}=f_{*}\mu^{(k)}$ as defined in the discussion preceding Proposition \ref{prop:nc pushf preserves conc}. Set
\[\eta=-\limsup_{k\to\infty}\frac{1}{n(k)^{2}}\alpha_{\nu^{(k)},d^{\orb{}}_{F}}(2\varepsilon).\]
By Proposition \ref{prop:nc pushf preserves conc}, we know $\eta>0.$
Since $h(N:M)\leq 0,$ we may choose a weak$^{*}$-neighborhood $\mathcal{V}$ of $\ell_{(y,x)}$ so that
\[ K_{\varepsilon,F}^{\orb{}}(y:\mathcal{V},\|\cdot\|_{2})\leq \frac{\eta}{8}.\]
Let $\Xi_{k}\subseteq\Gamma_{\widehat{R}}^{(n(k))}(y:\mathcal{V})$ be $\varepsilon$-dense with respect to $d^{\textnormal{orb}}_{F}$ and so that
\[|\Xi_{k}|=K_{\varepsilon}(\Gamma_{\widehat{R}}^{(n(k))}(y:\mathcal{V}),d^{\textnormal{orb}}_{F}).\]
Let $\phi_{k}\colon \Gamma_{\widehat{R}}^{(n(k))}(y:\mathcal{V})\to \Xi_{k}$ be Borel maps which satisfy
\[d^{\orb{}}_{F}(A,\phi_{k}(A))<\varepsilon\mbox{ for all $A\in \Gamma_{\widehat{R}}^{(n(k))}(y:\mathcal{V})$.}\]
Set
\[\Theta_{k}=\{A\in \Gamma_{\widehat{R}}^{(n(k))}(y:\mathcal{V}):\nu^{(k)}(N_{2\varepsilon}(A,d^{\orb{}}_{F}))\geq \exp(-n(k)^{2}\eta/2)\},\]
and $\Delta_{k}= \Gamma^{(n(k))}(y;\mathcal{V})\setminus \Theta_{k}.$ Observe that for every $A\in \Delta_{k},$ we have $\nu^{(k)}(N_{\varepsilon}(\phi_{k}(A),d^{\orb{}}_{F}))<\exp(-n(k)^{2}\eta^{2}/2).$ So
\[\nu^{(k)}(\Delta_{k})\leq \sum_{B\in \phi_{k}(\Delta_{k})}\nu^{(k)}(N_{\varepsilon}(B,d^{\orb{}}_{F}))<\exp(-n(k)^{2}\eta/2)|\Xi_{k}|.\]
Thus
$\nu^{(k)}(\Delta_{k})\leq\exp(-n(k)^{2}/4)$ for all large $k$,
and so $\nu^{(k)}(\Delta_{k})\to 0.$ So
\[\nu^{(k)}(\Theta_{k})=\nu^{(k)}(\Gamma_{R}^{(n(k))}(y:\mathcal{O}))-\nu^{(k)}(\Delta_{k})\to 1,\]
as $\nu^{(k)}$ is asymptotically supported on the microstates space for $y$ in the presence of $x.$
 Suppose $B_{1},B_{2}\in \Theta_{k}.$ If $k$ is sufficiently large, then by Lemma \ref{L:concentration expansion}
\[N_{4\varpesilon}(B_{1},d^{\orb{}}_{F})\cap N_{4\varepsilon}(B_{2},d^{\orb{}}_{F})\ne \varnothing,\]
and thus $d^{\orb{}}_{F}(B_{1},B_{2})\leq 8\varepsilon.$ By definition of $\nu^{(k)},$ we have
$\mu^{(k)}(f^{-1}(\Theta_{k}))=\nu^{(k)}(\Theta_{k})\to 1.$
So if we set $\Omega_{k}=f^{-1}(\Theta_{k})\cap \Gamma_{R}^{(n(k))}(\mathcal{O}),$ then it is direct to show that $\Omega_{k}$ has the desired properties with $\varepsilon$ replaced by $8\varepsilon.$ Since $\varepsilon>0$ is arbitrary, this proves the claim.

To prove the theorem, let $(F_{m})_{m}$ be an increasing sequence of finite subsets of $J$ with $J=\bigcup_{m}F_{m},$ and let $\mathcal{O}_{m}$ be a decreasing sequence of weak$^{*}$-neighborhoods of $\ell_{x}$ in $\Sigma_{R,J}$ with $\bigcap_{m}\mathcal{O}_{m}=\{\ell_{x}\}.$ By the claim, for every positive integer $m,$ we may choose a sequence $\Omega_{k,m}\subseteq \Gamma_{\widehat{R}}^{(n(k))}(y:\mathcal{O}_{m})$ with
\[\limsup_{k\to\infty}\sup_{A_{1},A_{2}\in \Omega_{k,m}}d^{\textnormal{orb}}_{F_{m}}(f(A_{1}),f(A_{2}))<2^{-m},\]
\[\lim_{k\to\infty}\mu^{(k)}(\Omega_{k,m})=1.\]
We may thus find a strictly increasing sequence $1<K_{1}<K_{2}<\cdots$ of integers so that for every positive integer $m$
\[\sup_{\substack{k\geq K_{m},\\ A_{1},A_{2}\in \Omega_{k,m}}}d^{\textnormal{orb}}_{F_{m}}(f(A_{1}),f(A_{2}))<2^{-m},\mbox{   and }\inf_{k\geq K_{m}}\mu^{(k)}(\Omega_{k,m})\geq 1-2^{-m}.\]
Define $\Omega_{k}$ as follows. For $k<K_{1},$ set $\Omega_{k}=\varnothing,$ and for $k\geq K_{1}$ let $m$ the unique integer so that $K_{m}\leq k<K_{m+1}$ and set $\Omega_{k}=\Omega_{k,m}.$ It is then direct to verify that $\Omega_{k}$ has the desired properties.

\end{proof}

This recovers  Theorem \ref{T:more general main theorem intro} (\ref{item:microstates collapse intro}) as follows.

\begin{proof}[Proof of  Theorem \ref{T:more general main theorem intro} (\ref{item:microstates collapse intro}) from Theorem \ref{thm:microstates collapse generalization}]
Let $\mu^{(k)}$ be the distribution of $X^{(k)}$, and fix $Q\leq M$ with $h(Q:M)\leq 0$. Let $R_{j},y,f$ be as in the statement of Theorem \ref{T:more general main theorem intro}
and set $R=(R_{j})_{j=1}^{n}$. The statement that $\ell_{X^{(k)}}\to \ell_{x}$ in probability implies that for every weak$^{*}$-neighborhood $\mathcal{O}$ of $\ell_{x}$ we have $\mu^{(k)}(\Gamma_{R}^{(k)}(\mathcal{O}))\to_{k\to\infty}1$. Additionally, the assumptions of Theorem \ref{T:more general main theorem intro} imply that
\[\lim_{k\to\infty}\mu^{(k)}\left(\prod_{j=1}^{n}\{A\in M_{n}(\C):\|A\|_{\infty}\leq R_{j}\}\right)=1,\]
and that $\mu^{(k)}$ has exponential concentration at scale $n(k)^{2}$. Let $\Omega_{k}$ be as in the conclusion to Theorem \ref{thm:microstates collapse generalization}. Since $\mu^{(k)}(\Omega_{k})\to 1$, for all large $k$ we can find  $A^{(k)}\in \Omega_{k}$. Then $\ell_{A^{(k)}}\to \ell_{x}$ in law, since for every weak$^{*}$-neighborhood $\mathcal{O}$ of $\ell_{x}$ we have $\Omega_{k}\subseteq \Gamma_{R}^{(k)}(\mathcal{O})$ for all large $k$.

If $\varepsilon>0$, then we may find a $K$ so that for all $k\geq K$ we have
\[\sup_{B\in \Omega_{k}}d^{\orb{}}(f(B),f(A^{(k)}))<\varepsilon.\]
So for all $k\geq K$
\[\P(d^{\orb{}}(f(X^{(k)}),f(A^{(k)}))<\varespilon)=\mu^{(k)}(\{X:d^{\orb{}}(f(X^{(k)}),f(A^{(k)}))<\varespilon\})\geq \mu^{(k)}(\Omega_{k})\to_{k\to\infty}1.\]
Thus $d^{\orb{}}(f(X^{(k)}),f(A^{(k)}))\to 0$ in probability.

\end{proof}

As we remarked in the introduction, we will see in Section \ref{sec: Jung} (see Theorem \ref{thm:as conjugation}) that one can use an ultraproduct framework to reformulate the above result in terms of a ``random Jung theorem."

\subsection{Proof of Theorem \ref{T:more general main theorem intro} (\ref{item:Pinskers are amenable intro})}\label{sub sec: proof of Pinsker are amenable intro}

The Peterson-Thom conjecture is inherently a question about von Neumann algebras, whereas strong convergence of laws is inherently a question about $C^{*}$-algebras. For example, strong convergence can be reformulated in terms of trace-preserving embeddings into \emph{$C^{*}$-ultraproducts}. So a significant aspect of Theorem \ref{T:more general main theorem intro} is the assertion that we can reduce the \emph{von Neumann question} of validity of the Peterson-Thom conjecture to a \emph{$C^{*}$-question} about  strong convergence. In order to do this, we will need to assume that  our given von Neumann algebra  can be approximated by any ``nice enough" weak$^{*}$-dense $*$-subalgebra in a manner robust enough  to preserve some key structure of the von Neumann algebra. In particular, we will make use of the Connes-Haagerup characterization  of nonamenability of a von Neumann algebra in terms of norms of ``Laplace-like" operators in the tensor of the algebra with its opposite. Thus, we will
need to assume that our approximation process keeps norms under control when we pass to tensor products. Maintaining control over norms when passing to tensor products is the raison d'\^{e}tre for the notions of completely bounded/completely positive maps.
So the above discussion naturally leads us to the consideration of approximation properties formulated via completely positive and completely bounded maps.

Given a  $C^{*}$-algebra $A$,  there is a canonical way to view $A^{**}$ as a von Neumann algebra. Moreover   the natural inclusion $A\hookrightarrow A^{**}$ allows us to view $A^{**}$ as the \emph{universal enveloping von Neumann algebra of $A$}.  Namely, given any $*$-representation $\pi\colon A\to B(\mathcal{H})$ with $\mathcal{H}$ a Hilbert space, there is a unique, normal $*$-representation $\widetilde{\pi}\colon A^{**}\to B(\mathcal{H})$ with $\widetilde{\pi}\big|_{A}=\pi$. Moreover, $\widetilde{\pi}(A)=\overline{\pi(A)}^{SOT}$ (see \cite[Theorem 2.4]{Taka} for a proof of all of this).

\begin{defn}\label{D:locally reflexive}
We say that a (unital) $C^{*}$-algebra $A$ is \emph{locally reflexive} if given any finite-dimensional operator system $E\subseteq A^{**},$ there is a net $\phi_{\alpha}\colon E\to A$ of completely positive maps with $\|\phi_{\alpha}\|_{cb}\leq 1$ and so that $\phi_{\alpha}(x)\to_{\alpha} x$ in the weak$^{*}$-topology.
\end{defn}
An alternate way to phrase this is as follows. Let $E,F$ be operator systems. If $F$ is an operator system concretely embedded in $B(\mathcal{H})$ with $\mathcal{H}$ a Hilbert space, then we can give $CP(E,F)$ the \emph{point-WOT} topology. So a basic neighborhood of $\phi\in CP(E,F)$ is given by
\[\mathcal{O}_{G_{1},G_{2},\varepsilon}(\phi)=\bigcap_{x\in G_{1},\xi,\eta\in G_{2}}\{\psi\in CP(E,F):|\ip{\phi(x)\xi,\eta}-\ip{\psi(x)\xi,\eta}|<\varepsilon\}\]
for finite sets $G_{1}\subseteq E$, $G_{2}\subseteq \mathcal{H},$ and an $\varepsilon\in (0,\infty).$
  Let $A$ be a $C^{*}$-algebra. For an operator space $E\subseteq A^{**},$ we use $\iota_{E}$ for the inclusion map $E\hookrightarrow A^{**}.$
Locally reflexivity is then just the assertion that
\[\iota_{E}\in \overline{\{\phi\in CP(E,A^{**}):\phi(E)\subseteq A,\|\phi\|_{cb}\leq 1\}}^{point-WOT},\]
for every finite dimensional $E\subseteq A^{**}.$ The main result on locally reflexivity that we need is that every exact $C^{*}$-algebra is locally reflexive (see \cite{KirchbergExact,KirchbergCAR} and also \cite[Theorem 9.3.1]{BO}). Since exact $C^{*}$-algebras are ubiquitous in free probability, this provides us with an adequate source of examples. E.g., the reduced free group $C^{*}$-algebra is locally reflexive, as is the $C^{*}$-algebra generated by a free semicircular family. Indeed, given any free tuple $(x_{1},\cdots,x_{k})\in M^{k}$  in a tracial von Neumann algebra $(M,\tau),$  with each $x_{j}$ being normal, we have that $C^{*}(x_{1},\cdots,x_{k})$ is exact by \cite{DykemaExact,DykemaDimaExact}.

It should be emphasized that $A^{**}$ is a very large von Neumann algebra. For example, it is only in very rare circumstances that $A^{*}$ is separable (e.g. this does not occur if $A$ contains a copy of $C(X)$ where $X$ is an uncountable compact Hausdorff space). Consequently, it is rare that $A^{**}$ can be represented on a separable Hilbert space. However, the fact that $A^{**}$ is the universal enveloping von Neumann algebra allows us to deduce more concrete approximations for other von Neumann algebras associated to $A.$ Recall that if $\mathcal{H}$ is a Hilbert space, and $M\subseteq B(\mathcal{H})$ is a von Neumann algebra, then $M$ is a \emph{von Neumann completion of $A$} if there is a faithful $*$-representation $\pi\colon A\to B(\mathcal{H})$ with $M=\overline{\pi(A)}^{SOT}.$ Suppose $A$ is locally reflexive and $M$ is a von Neumann completion of $A,$ and view $A\subseteq M.$ By universality of $A^{**}$ it follows that if $E\subseteq M$ is a finite-dimensional operator system, then
\[\iota_{E}\in \overline{\{\phi\in CP(E,M):\phi(E)\subseteq A,\|\phi\|_{cb}\leq 1\}}^{point-WOT},\]
where $\iota_{E}\colon E\to M$ is the inclusion map. This is the precise manner in which we shall use local reflexivity to approximate elements of $M$ by a prescribed weak$^{*}$-dense $*$-subalgebra.

We also need to recall some notation and a result of Haagerup.
 We have an action $\#$ of $M_{k}(\C)\otimes M_{k}(\C)$ on $S^{2}(k,\tr)$ defined on elementary tensors by
\[(A\otimes B)\#C=ACB^{t}.\]
It is direct to check that this gives a $*$-isomorphism
\[M_{k}(\C)\otimes M_{k}(\C)\cong B(S^{2}(k,\tr)).\]
Since $*$-isomorphisms between $C^{*}$-algebras are isometric, it follows that
\[\|x\|_{\infty}=\|x\#\|_{B(S^{2}(k,\tr))}\]
for all $x\in M_{k}(\C)\otimes M_{k}(\C).$
For a tracial von Neumann algebra $(M,\tau)$ and $x\in M,$ we let $M^{op}$ be the von Neumann algebra which as a set is $\{x^{op}:x\in M\}.$ The vector space operations and the $*$-operation is the same as in $M,$ but the product is the opposite:
\[x^{op}y^{op}=(yx)^{op}.\]
For $x\in M,$ we let $\overline{x}=(x^{*})^{op}.$ Note that we have a canonical identification $M_{k}(\C)\cong M_{k}(\C)^{op}$ given by $A\mapsto (A^{t})^{op}.$ For $A\in M_{k}(\C),$ we let $\overline{A}=(A^{*})^{t}.$ Technically, this means we have two different notions of $\overline{A}$ for $A\in M_{k}(\C).$ One as an element of $M_{k}(\C)$ and one as an element of $M_{k}(\C)^{op}.$ However, under the identification $M_{k}(\C)\cong M_{k}(\C)^{op}$ given above, these two notations coincide. Since we always identify $M_{k}(\C)$,$ M_{k}(\C)^{op}$ via the map $A\mapsto (A^{t})^{op},$ this will not cause confusion. The way we shall use nonamenability is in the following characterization of nonamenability of tracial von Neumann algebras, due to Haagerup.
\begin{thm}[Haagerup,  Lemma 2.2 in \cite{HaagerupAmenable}]
Let $(M,\tau)$ be a tracial von Neumann algebra. Then $M$ is nonamenable if and only if there is a nonzero central projection $f\in M$ and $u_{1},\cdots,u_{r}\in \mathcal{U}(Mf)$ so that
\[\left\|\frac{1}{r}\sum_{j=1}^{r}u_{j}\otimes\overline{u_{j}}\right\|_{\infty}<1.\]
\end{thm}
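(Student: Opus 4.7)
The plan is to treat the two directions separately, with the reverse implication being the easier half. Suppose first that there exist a nonzero central projection $f$ and unitaries $u_1, \ldots, u_r \in \mathcal{U}(Mf)$ with $\|r^{-1}\sum_j u_j \otimes \overline{u_j}\|_{\infty} < 1$. The canonical $*$-homomorphism $\pi \colon (Mf) \otimes_{\mathrm{alg}} (Mf)^{op} \to B(L^2(Mf))$ sending $x \otimes y^{op}$ to the operator $\xi \mapsto x\xi y$ maps $T_r := r^{-1}\sum_j u_j \otimes \overline{u_j}$ to the operator $\xi \mapsto r^{-1}\sum_j u_j \xi u_j^*$, which has operator norm exactly $1$ since it is a contraction fixing $\widehat{1}_{Mf}$. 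Thus $\pi$ strictly increases the $\min$ norm on $T_r$, so $\pi$ is not bounded with respect to $\|\cdot\|_{\min}$; by Connes' theorem this forces $Mf$ to be nonamenable, and since amenability of $M$ would pass to the corner $Mf$, $M$ must itself be nonamenable.

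For the forward direction, assume $M$ is nonamenable. First I would reduce via the central decomposition $M = \int^{\oplus} M_z \, d\mu(z)$ over the center $Z(M) \cong L^{\infty}(X,\mu)$: amenability of $M$ is equivalent to $\mu$-a.e. amenability of the fibers $M_z$, so there is a measurable subset on which every fiber is nonamenable. Cutting by the associated central projection $f$, the compression $Mf$ is still nonamenable and, by Connes' theorem, not semidiscrete; hence the canonical map $\pi_f \colon (Mf) \otimes_{\mathrm{alg}} (Mf)^{op} \to B(L^2(Mf))$ fails to be $\min$-bounded, and there is at least one element on which $\|\cdot\|_{B(L^2)}$ strictly exceeds $\|\cdot\|_{\min}$.

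The core step is upgrading this qualitative norm gap to an element of the specific form $T_r = r^{-1}\sum_j u_j \otimes \overline{u_j}$. Since $\pi_f(T_r)\widehat{1}_{Mf} = \widehat{1}_{Mf}$ holds automatically, one has $\|\pi_f(T_r)\| \geq 1$ for free, so the task reduces to producing unitaries making $\|T_r\|_{\infty} < 1$ in $(Mf) \otimes_{\min} (Mf)^{op}$. Here the key input is a Dixmier-type averaging combined with the absence of a hypertrace: the trace $\tau$ on $Mf$ extends to an $Mf$-central state on $B(L^2(Mf))$ if and only if $Mf$ is amenable, so nonamenability yields, via a Hahn-Banach argument applied to the $\min$-norm-closed convex hull of $\{u \otimes \overline{u} : u \in \mathcal{U}(Mf)\}$, a uniform separation of this convex set from $1 \otimes 1$. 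Extracting a finite convex combination realising this gap produces the required unitaries. I expect this final averaging step to be the main obstacle, as it lies at the heart of Connes' equivalence of amenability with semidiscreteness; in practice one follows Haagerup's original argument to extract a finite tuple of unitaries witnessing the failure of semidiscreteness in the desired symmetric form.
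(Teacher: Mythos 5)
The ``if'' direction of your argument is correct and complete in outline: the operator $\xi\mapsto \frac1r\sum_j u_j\xi u_j^*$ is an average of unitaries of $L^{2}(Mf)$ fixing $\widehat{1}_{Mf}$, hence has norm exactly $1$, while the hypothesis gives $\|\frac1r\sum_j u_j\otimes\overline{u_j}\|_{\min}<1$; since a $*$-homomorphism that is bounded for the min norm extends to the completion and is then automatically contractive, the multiplication map $Mf\otimes_{\textnormal{alg}}(Mf)^{op}\to B(L^{2}(Mf))$ is not min-continuous, so $Mf$ (and hence $M$) is nonamenable by Connes/Effros--Lance. Note that the paper itself offers no proof of this statement --- it is quoted directly from Haagerup --- so the only question is whether your argument stands on its own.

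It does not in the ``only if'' direction, and the gap is exactly at the step you flag as the main obstacle. The proposed ``Hahn--Banach separation of the min-norm-closed convex hull of $\{u\otimes\overline{u}:u\in\mathcal{U}(Mf)\}$ from $1\otimes 1$'' is impossible as stated: taking $u=1$ shows that $1\otimes1$ belongs to that convex hull, and no functional separates a convex set from one of its own points. The correct mechanism runs in the opposite logical direction and is a compactness argument rather than a separation argument: one assumes that \emph{every} finite average $\frac1r\sum_j u_j\otimes\overline{u_j}$ has min norm $1$, uses the fact that a sum of $r$ unitaries of norm $r$ (after adjoining $u=1$ to the tuple) admits a state $\phi_F$ with $\phi_F(u\otimes\overline{u})=1$ for each $u$ in the finite set $F$, extends $\phi_F$ to $B(L^{2}(Mf))\otimes_{\min}(Mf)^{op}$ by injectivity of the minimal tensor product, observes via the Cauchy--Schwarz/multiplicative-domain trick that $T\mapsto\phi_F(T\otimes1)$ is $\operatorname{Ad}(u)$-invariant for $u\in F$, and takes a weak-$*$ limit to produce an $Mf$-central state, contradicting nonamenability. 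Even granting this, the non-factor bookkeeping is not addressed: the limit hypertrace need not extend $\tau$ or be faithful on the center (e.g.\ $L(\F_{2})\oplus\C$ admits a hypertrace, namely the vector state at the one-dimensional summand), which is precisely why the central projection appears in the statement; Haagerup deals with this by passing to the normal part of the limit trace and to a maximal injective central summand, rather than by the direct-integral reduction you propose (which would in any case require separable predual). As written, the hard half of the equivalence is deferred to the reference rather than proved.
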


In order to prove Theorem \ref{T:more general main theorem intro} (\ref{item:Pinskers are amenable intro}), we need to reduce the  validity of the Peterson-Thom conjecture to the \emph{$C^{*}$-question} of strong convergence. We begin with the following Proposition, which gives a general result along these lines.
We remark that the argument for the proof of this Proposition is  analogous with a method of proof of Chifan-Sinclair (see \cite[Theorem 3.2]{ChifanSinclair}) in the context of Popa's deformation/rigidity theory.

\begin{prop}\label{prop:almost unitary conjugacy}
Let $(M,\tau)$ be a tracial von Neumann algebra, $I$ an index set, and $x\in M^{I}$ with $W^{*}(x)=M.$ Fix $R\in [0,\infty)^{I}$ with $\|x_{i}\|_{\infty}\leq R_{i}$ for all $i\in I.$  Suppose that $C^{*}(x)$ is locally reflexive and that $Q\leq M$ is nonamenable.  Then there  is an $r\in \N,$ an $F\in (\mathcal{F}_{R,I,\infty})^{r}$ with $F(x)\in Q^{r}$ and an $\varepsilon>0$ which satisfies the following property. Assume we are given
\begin{itemize}
    \item positive integers $(n(k))_{k=1}^{\infty}$ with $n(k)\to \infty,$
    \item $A^{(k)},B^{(k)}\in \prod_{i\in I}\{C\in M_{n(k)}(\C):\|C\|_{\infty}\leq R_{i}\}$
\end{itemize}
such that the law of $(A^{(k)}\otimes 1_{M_{n(k)}(\C)},1_{M_{n(k)}(\C)}\otimes (B^{(k)})^{t})$ converges strongly to the law of $(x\otimes 1_{C^{*}(x)^{op}},1_{C^{*}(x)}\otimes x^{op})$. Then
\[\liminf_{k\to\infty}d^{\orb{}}(F(A^{(k)}),F(B^{(k)}))\geq \varepsilon.\]
\end{prop}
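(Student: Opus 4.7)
The plan is to apply Haagerup's characterization directly to the nonamenable algebra $Q$: extract a nonzero central projection $f\in Q$ and unitaries $u_{1},\dots,u_{r}\in\mathcal U(Qf)$ so that
\[
c:=\Big\|\tfrac{1}{r}\sum_{j=1}^{r}u_{j}\otimes\overline{u_{j}}\Big\|_{M\overline{\otimes} M^{op}}<1,
\]
a norm which, by injectivity of the minimal tensor product, agrees with the one in $Qf\otimes_{\min}(Qf)^{op}$. By Theorem \ref{thm:nc func calc properties}(\ref{item:surjective nc func calc}), lift $f=f_{0}(x)$ and $u_{j}=F_{j}(x)$ with $\|f_{0}\|_{R,\infty},\|F_{j}\|_{R,\infty}\leq 1$, and set $F=(F_{1},\dots,F_{r})$, so that $F(x)\in Q^{r}$. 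I will argue by contradiction and extract $\varepsilon>0$ depending only on $c$ and $\tau(f)$: suppose along a subsequence there exist $U^{(k)}\in\mathcal U(n(k))$ with $\|F_{j}(A^{(k)})-U^{(k)}F_{j}(B^{(k)})(U^{(k)})^{*}\|_{2}\to 0$ for each $j$; the goal is to derive a violation of $c<1$.

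The contradiction will come from squeezing the norm of an auxiliary \emph{polynomial} tensor
\[
S'^{(k)}=\tfrac{1}{r}\sum_{j}p_{j}(A^{(k)})\otimes\overline{p_{j}(B^{(k)})}^{op}
\]
between a lower bound near $1$ and an upper bound near $c$. For the upper bound, local reflexivity of $C^{*}(x)$ supplies u.c.p.\ maps $\phi_{\alpha}\colon E\to C^{*}(x)$ on the finite-dimensional operator system $E=\operatorname{span}\{1,u_{j},u_{j}^{*}:j\}\subseteq M$ with $\phi_{\alpha}\to\operatorname{id}_{E}$ in point-WOT. Setting $a_{j}=\phi_{\alpha}(u_{j})\in C^{*}(x)$, the map $\phi_{\alpha}\otimes\phi_{\alpha}^{op}$ is completely contractive on the minimal tensor product, giving $\|\tfrac{1}{r}\sum_{j}a_{j}\otimes\overline{a_{j}}\|_{\min}\le c$. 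Approximating each $a_{j}$ in $C^{*}(x)$-norm by a polynomial $p_{j}$ and invoking the strong convergence of $(A^{(k)}\otimes 1,\,1\otimes (B^{(k)})^{t})$ to $(x\otimes 1,\,1\otimes x^{op})$ then yields $\limsup_{k}\|S'^{(k)}\|_{\infty}\le c+O(\eta)$ for any chosen accuracy $\eta>0$.

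For the lower bound, the obstruction is that strong convergence only controls polynomials while the approximate conjugation is phrased in terms of $F_{j}\in\mathcal F_{R,I,\infty}$, and no operator-norm comparison between $F_{j}(A^{(k)})$ and $p_{j}(A^{(k)})$ is available. The bridge is to upgrade the local-reflexivity approximation from point-WOT to point-$\|\cdot\|_{2}$ via Kadison--Schwarz: $\|\phi_{\alpha}(y)\|_{2}^{2}\le\tau(\phi_{\alpha}(y^{*}y))\to\tau(y^{*}y)=\|y\|_{2}^{2}$, which combined with the weak $L^{2}$-convergence $\phi_{\alpha}(y)\to y$ implied by point-WOT, forces $\|a_{j}-u_{j}\|_{2}\to 0$. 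Refining $p_{j}$ gives $\|p_{j}(x)-u_{j}\|_{2}<\eta$, and transferring via law convergence of $A^{(k)},B^{(k)}$ applied to the $\mathcal F_{R,I,\infty}$-element $p_{j}-F_{j}$ (whose $L^{2}$-evaluation is continuous in the law), together with the hypothesized $L^{2}$-conjugacy of the $F_{j}$, yields
\[
\|p_{j}(A^{(k)})-U^{(k)}p_{j}(B^{(k)})(U^{(k)})^{*}\|_{2}\le 3\eta+o(1).
\]
Evaluating $S'^{(k)}\#(U^{(k)}f_{0}(B^{(k)}))$ and substituting this near-conjugacy on the left factor collapses the main term to $U^{(k)}\cdot\tfrac{1}{r}\sum_{j}p_{j}(B^{(k)})f_{0}(B^{(k)})p_{j}(B^{(k)})^{*}$, which is $S^{2}$-close to $U^{(k)}f_{0}(B^{(k)})$ because the identity $u_{j}fu_{j}^{*}=f$ at $x$ survives law convergence together with the $L^{2}$-closeness $p_{j}\approx u_{j}$. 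Since $\|U^{(k)}f_{0}(B^{(k)})\|_{2}\to\tau(f)^{1/2}>0$, this forces $\liminf_{k}\|S'^{(k)}\|_{\infty}\ge 1-O(\eta)$, contradicting the upper bound once $\eta$ is chosen small relative to $(1-c)\tau(f)^{1/2}$. The crux of the argument, and the sole essential use of local reflexivity, is this $L^{2}$-upgrade.
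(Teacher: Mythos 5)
Your proposal is correct and follows essentially the same route as the paper: Haagerup's criterion applied to $Q$, the functional-calculus lift, the local-reflexivity upgrade from point-WOT to $\|\cdot\|_{2}$ (the paper gets this from convexity of the set of c.p.\ approximants rather than Kadison--Schwarz, to the same effect), polynomial approximation in operator norm, and transfer of the tensor-norm bound via strong convergence together with the $\#$-action on $S^{2}(n,\tr)$. The only substantive difference is presentational: the paper runs the estimate directly, pairing $\sum_{j}F_{j}(A^{(k)})U^{(k)}P(B^{(k)})F_{j}(B^{(k)})^{*}$ against the test vector by Cauchy--Schwarz to obtain the explicit $\varepsilon=\sqrt{2\tau(p)(1-C)}$, whereas your contradiction should be set up with the hypothesis $\liminf_{k}d^{\orb{}}<\varepsilon$ rather than ``$\to 0$'' so that the $\varepsilon$ you extract is uniform over all admissible sequences, as the statement requires (your quantitative tracking already supplies this).
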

\begin{proof}
 By  \cite[Lemma 2.2]{HaagerupAmenable}, we may find a nonzero projection $p\in Z(Q)$ and $u_{1},\cdots,u_{r}\in \mathcal{U}(Qp)$ so that
\[C=\left\|\frac{1}{r}\sum_{j=1}^{r}u_{j}\otimes\overline{u_{j}}\right\|_{\infty}<1.\]
Fix any $C'\in (C,1).$ Choose $P\in \mathcal{F}_{R,I}$ with $\|P\|_{R,\infty}\leq 1$ and $P(x)=p,$ and $F_{j}\in \mathcal{F}_{R,J}$,$j=1,\cdots,r$ with $F_{j}(x)=u_{j}$ and $\|F_{j}\|_{R,\infty}\leq 1.$ Set $F=(F_{1},\cdots,F_{r}).$ Suppose we have
\begin{itemize}
    \item positive integers $(n(k))_{k=1}^{\infty}$ with $n(k)\to \infty,$
    \item $A^{(k)},B^{(k)}\in \prod_{i\in I}\{C\in M_{n(k)}(\C):\|C\|_{\infty}\leq R_{i}\}$
\end{itemize}
so that the law of $(A^{(k)}\otimes 1_{M_{n(k)}(\C)},1_{M_{n(k)}(\C)}\otimes (B^{(k)})^{t})$ converges strongly to the law of $(x\otimes 1_{C^{*}(x)^{op}},1_{C^{*}(x)}\otimes x^{op})$. Choose unitaries $U^{(k)}\in \mathcal{U}(k)$ so that
\[d^{\orb}(F(A^{(k)}),F(B^{(k)}))=\|F(A^{(k)})-U^{(k)}F(B^{(k)})(U^{(k)})^{*}\|_{2}.\]
Then,
\[d^{\orb}(F(A^{(k)}),F(B^{(k)}))^{2}=\sum_{j=1}^{r}(\|F_{j}(A^{(k)})\|_{2}^{2}+\|F_{j}(B^{(k)})\|_{2}^{2})-2\sum_{j=1}^{r}\rea{\tr(F_{j}(A^{(k)})U^{(k)}F_{j}(B^{(k)})^{*}(U^{(k)})^{*})}.\]
By weak$^{*}$ convergence of laws,
\[\liminf_{k\to\infty}d^{\orb}(F(A^{(k)}),F(B^{(k)}))^{2}\geq 2r\tau(p)-2\limsup_{k\to\infty}\sum_{j=1}^{r}\rea{\tr(F_{j}(A^{(k)})U^{(k)}F_{j}(B^{(k)})^{*}(U^{(k)})^{*})}.\]
Since $F_{j}(x)P(x)=F_{j}(x),$ and $\|F_{j}\|_{R,\infty}\leq 1$,$\|P\|_{R,\infty}\leq 1$ for all $j=1,\cdots,r,$ we have \[\|F_{j}(B^{(k)})-P(B^{(k)})F_{j}(B^{(k)})P(B^{(k)})^{*}\|_{2}\to 0.\]
So using once again that $\|F_{j}\|_{R,\infty}\leq 1$ for all $j=1\cdots,r,$ it follows that
\begin{align*}
   \liminf_{k\to\infty}d^{\orb}(F(A^{(k)}),F(B^{(k)}))^{2}&\geq 2r\tau(p)\\
   &-2\limsup_{k\to\infty}\sum_{j=1}^{r}\rea{\tr(F_{j}(A^{(k)})U^{(k)}P(B^{(k)})F_{j}(B^{(k)})^{*}P(B^{(k)})^{*}(U^{(k)})^{*})}\\
   &\geq 2r\tau(p)\\
   &-2\limsup_{k\to\infty}\|P(B^{(k)})\|_{2}\left\|\sum_{j=1}^{r}F_{j}(A^{(k)})U^{(k)}P(B^{(k)})F_{j}(B^{(k)})^{*}\right\|_{2}
\end{align*}
Since $\|P(B^{(k)})\|_{2}\to \|P(x)\|_{2}=\sqrt{\tau(p)},$ we obtain:
\begin{equation}\label{eqn:lower bounds on orbit distance}
\liminf_{k\to\infty}d^{\orb}(F(A^{(k)}),F(B^{(k)}))^{2}\geq 2r\tau(p)-2\sqrt{\tau(p)}\limsup_{k\to\infty}\left\|\sum_{j=1}^{r}F_{j}(A^{(k)})U^{(k)}P(B^{(k)})F_{j}(B^{(k)})^{*}\right\|_{2}
\end{equation}

To bound the second term in this expression,
let $E=\Span(\{u_{j}\}_{j=1}^{r}\cup\{1\}\cup \{u_{j}^{*}\}_{j=1}^{r}).$ By local reflexivity
 we have
 \[(u_{j})_{j=1}^{r}\in\overline{\{(\phi(u_{j}))_{j=1}^{r}:\phi\in CP(E,C^{*}(x)),\|\phi\|_{cb}\leq 1\}}^{WOT}.\]
 So, by convexity,
 \[(u_{j})_{j=1}^{r}\in\overline{\{(\phi(u_{j}))_{j=1}^{r}:\phi\in CP(E,C^{*}(x)),\|\phi\|_{cb}\leq 1\}}^{SOT}.\]
 Hence we may find a sequence $\phi_{m}\colon E\to C^{*}(x)$ of contractive, completely positive maps with
 \[\|\phi_{m}(u_{j})-u_{j}\|_{2}\to 0\]
 for all $j=1,\cdots,r.$ Choose $Q_{j,m}\in \C^{*}\ip{(T_{j})_{j\in J}}$ with
 \[\|Q_{j,m}(x)-\phi_{m}(u_{j})\|_{\infty}\leq \min\left(\frac{C'-C}{2},2^{-m}\right)\mbox{ for all $j=1,\cdots,r$,}\]
 \[\|Q_{j,m}\|_{\infty}\leq 1\mbox{ for all $j=1,\cdots,r$.}\]
 Since $u_{j}=F_{j}(x),$ and $\|P\|_{R,\infty}$, $\|Q_{j,m}\|_{R,\infty}$, $\|F_{j}\|_{R,\infty}\leq 1$ for all $j=1,\cdots,r$, we obtain
 \begin{align*}
   \left\|\sum_{j=1}^{r}F_{j}(A^{(k)})U^{(k)}P(B^{(k)})F_{j}(B^{(k)})^{*}\right\|_{2}&\leq  \sum_{j=1}^{r}\|F_{j}(A^{(k)})-Q_{j,m}(A^{(k)})\|_{2}\\
   &+\sum_{j=1}^{r}\|F_{j}(B^{(k)})-Q_{j,m}(B^{(k)})\|_{2}\\
   &+\left\|\sum_{j=1}^{r}Q_{j,m}(A^{(k)})U^{(k)}P(B^{(k)})Q_{j,m}(B^{(k)})^{*}\right\|_{2}\\ &\leq \sum_{j=1}^{r}\|F_{j}(A^{(k)})-Q_{j,m}(A^{(k)})\|_{2}\\
   &+\sum_{j=1}^{r}\|F_{j}(B^{(k)})-Q_{j,m}(B^{(k)})\|_{2}\\
   &+\left\|\sum_{j=1}^{r}Q_{j,m}(A^{(k)})\otimes \overline{Q_{j,m}(B^{(k)})}\right\|_{\infty}\|P(B^{(k)})\|_{2}.
 \end{align*}
 Using strong convergence,
\[\limsup_{k\to\infty}\left\|\sum_{j=1}^{r}F_{j}(A^{(k)})U^{(k)}P(B^{(k)})F_{j}(B^{(k)})^{*}\right\|_{2}\leq 2\sum_{j=1}^{r}\|u_{j}-Q_{j,m}(x)\|_{2}+\sqrt{\tau(p)}\left\|\sum_{j=1}^{r}Q_{j,m}(x)\otimes \overline{Q_{j,m}(x)}\right\|_{\infty}\]
By our choice of $Q_{j,m},$ we have
\[\left\|\sum_{j=1}^{r}Q_{j,m}(x)\otimes \overline{Q_{j,m}(x)}\right\|_{\infty}\leq (C'-C)r+\left\|\sum_{j=1}^{r}\phi_{m}(u_{j})\otimes \overline{\phi_{m}(u_{j})}\right\|_{\infty}\leq C'r,\]
where in the last step we use the definition of $C$ and the fact that $\|\phi_{m}\|_{cb}\leq 1$ implies $\|\phi_{m}\otimes \phi_{m}^{op}\|_{cb}\leq 1.$ So altogether we have shown that
\[\limsup_{k\to\infty}\left\|\sum_{j=1}^{r}F_{j}(A^{(k)})U^{(k)}P(B^{(k)})F_{j}(B^{(k)})^{*}\right\|_{2}\leq C'r\sqrt{\tau(p)}+2\sum_{j=1}^{r}\|u_{j}-Q_{j,m}(x)\|_{2}.\]
Inserting this into (\ref{eqn:lower bounds on orbit distance}),
\[\liminf_{k\to\infty}d^{\orb}(F(A^{(k)}),F(B^{(k)}))^{2}\geq 2r\tau(p)(1-C')-4\sum_{j=1}^{r}\|u_{j}-Q_{j,m}(x)\|_{2}.\]
 Letting $m\to\infty$ and then $C'\to C$ shows that
 \[\liminf_{k\to\infty}d^{\orb}(F(A^{(k)}),F(B^{(k)}))\geq \sqrt{2\tau(p)(1-C)}.\]
 So setting $\varepsilon=\sqrt{2\tau(p)(1-C)}$ completes the proof.

\end{proof}

We will give a cleaner way to state the above Proposition in terms of ultraproducts in Section \ref{sec: Jung} (see Proposition \ref{prop:not conj up}).  For now, we proceed to the proof of Theorem \ref{T:more general main theorem intro}.

\begin{proof}[Proof of Theorem \ref{T:more general main theorem intro} (\ref{item:Pinskers are amenable intro})]

Set $R=(R_{1},\cdots,R_{l})\in [0,\infty)^{l}.$ Let $\mu^{(k)}\in \Prob(M_{k}(\C)^{l})$ be the distribution of $(X_{j}^{(k)})_{j=1}^{l}$.
Suppose, for the sake of contradiction, that $Q\leq M$ is nonamenable and $h(Q:M)\leq 0.$ Since our hypotheses necessarily imply that $M$ embeds into an ultrapower of $\mathcal{R},$ it follows that $h(Q:M)=0.$ Let $r\in \N$ and $F\in (\mathcal{F}_{R,l})^{r}$ with $F(x)\in Q^{r}$ and $\varepsilon>0$ be as in the conclusion to Proposition \ref{prop:almost unitary conjugacy}. By Theorem \ref{thm:microstates collapse generalization}, we may choose a sequence $\Omega_{k}\subseteq \prod_{j=1}^{l}\{A\in M_{k}(\C):\|A\|_{\infty}\leq l\}$ with $\mu^{(k)}(\Omega_{k})=1$ and so that
\[\lim_{k\to\infty}\sup_{A,B\in \Omega_{k}}d^{\orb}(F(A),F(B))=0.\]
By strong convergence in probability, we may choose a sequence $\Theta_{k}\subseteq M_{k}(\C)^{l}\times M_{k}(\C)^{l}$ with
\[\mu^{(k)}\times \mu^{(k)}(\Theta_{k})\to 1,\]
and so that for any sequence $(A^{(k)},B^{(k)})\in \Theta_{k}$ the law of $(A^{(k)}\otimes 1_{M_{k}(\C)},1_{M_{k}(\C)}\otimes (B^{(k)})^{t})$ converges strongly to the law of $(x\otimes 1,1\otimes x^{op}).$ Since $\mu^{(k)}(\Omega_{k})\to 1$ and $\mu^{(k)}\otimes \mu^{(k)}(\Theta_{k})\to 1,$ for all large $k$ we may choose a $(A^{(k)},B^{(k)})\in (\Omega_{k}\times \Omega_{k})\cap \Theta_{k}.$ Then Proposition \ref{prop:almost unitary conjugacy} shows
\[\liminf_{k\to\infty}d^{\orb}(F(A^{(k)}),F(B^{(k)})\geq \varepsilon,\]
whereas our choice of $\Omega_{k}$ implies
\[\lim_{k\to\infty}d^{\orb}(F(A^{(k)}),F(B^{(k)})=0.\]
This gives a contradiction, which completes the proof.

\end{proof}

\subsection{Deduction of Theorem \ref{T:main}  from Theorem \ref{T:more general main theorem intro}}\label{subsec: proof of main theorem}

In this section, we shall deduce Theorem \ref{T:main}  from Theorem \ref{T:more general main theorem intro}. We also state a version for free families of \emph{Haar unitaries} instead of \emph{free semicirculars}. Moreover, it is not hard to see that our proof applies equally well to many other families of random matrices which model $L(\F_{r})$, provided that they exhibit exponential concentration with the correct rate.  We start with the proof of Theorem \ref{T:main}

\begin{proof}[Proof of Theorem \ref{T:main}]
The fact that (\ref{item: Pinskers are amenable}) implies (\ref{item:PT intro}) is the content of Proposition \ref{P:CPE implies PT}. So we focus on proving that (\ref{item:tensor Haus}) implies (\ref{item: Pinskers are amenable}).

Let $s=(s_{1},s_{2},\cdots,s_{r})$ be a free semicircular family with mean zero and variance $1.$ So $W^{*}(s)\cong L(\F_{r}).$
We let $\mu^{(k)}\in \Prob(M_{k}(\C)_{s.a.}^{2r})$ be the distribution of $(X^{(k)},Y^{(k)}).$ It is well known (see \cite[Proof of Lemma 3.3]{HaagThorbNormBound}) that there is a $C>0$ so that
\[\limsup_{k\to\infty}\frac{1}{k}\log \mu^{(k)}((\{A\in M_{k}(\C)_{s.a}^{2r}:\|A\|_{\infty}\leq C\})^{c})<0,\]
and Voiculescu's asymptotic freeness theorem (specifically \cite[Theorem 2.2]{VoicAsyFree}) implies that $\mu^{(k)}$ is asymptotically concentrated on microstates for $s$ (using $R$ as the constant function $C$). Further, exponential concentration of measure with scale $k^{2}$ is well known and follows, e.g., from \cite[Equation (2.10)]{LedouxCM}.
It  is direct to see that the coordinate-wise transpose map $M_{k}(\C)_{s.a.}^{r}\to M_{k}(\C)_{s.a.}^{r}$ preserves $\mu^{(k)}.$ Further, $s^{op}=(s_{1}^{op},s_{2}^{op},\cdots,s_{r}^{op})$ is also a tuple of $r$ free semicircular elements each with mean $0$ and variance $1,$ and so $s^{op}$ has the same distribution as $s.$ So the strong convergence in probability of
$(X^{(k)}\otimes 1_{M_{k}(\C)},1_{M_{k}(\C)}\otimes Y^{(k)})$
to $(s\otimes 1_{C^{*}(x)},1_{C^{*}(x)}\otimes s)$ is equivalent to the strong convergence in probability of $(X^{(k)}\otimes 1_{M_{k}(\C)},1_{M_{k}(\C)}\otimes (Y^{(k)})^{t})$ to $(s\otimes 1_{C^{*}(s^{op})},1_{C^{*}(s)}\otimes s^{op}).$
Thus we may apply Theorem \ref{T:more general main theorem intro} (\ref{item:Pinskers are amenable intro}), and the conclusion of that Theorem gives us exactly what we want.

\end{proof}

We also state a version of Theorem \ref{T:main} using independent Haar unitaries instead of the GUE ensemble.

\begin{thm}\label{T:Haar unitaries}
Fix an  integer $r\geq 2.$ For each $k\in \N,$ let $U_{1}^{(k)},\cdots,U_{r}^{(k)},V_{1}^{(k)},\cdots,V_{r}^{(k)}$ be random $k\times k$ unitary matrices which are independent and are each distributed according to Haar measure on $\mathcal{U}(k).$ Set
\[U^{(k)}\otimes 1_{M_{k}}(\C)=(U^{(k)}_{j}\otimes 1_{M_{k}(\C)})_{j=1}^{r},\,\,\, 1_{M_{k}(\C)}\otimes V^{(k)}=(1_{M_{k}(\C)}\otimes V^{(k)}_{j})_{j=1}^{r}.\]
Let $\F_{r}$ be the free group on $r$ letters $a_{1},\cdots,a_{r},$ and let \[\lambda(a)\otimes 1=(\lambda(a_{j})\otimes 1_{C^{*}_{\lambda}(\F_{r})})_{j=1}^{r},\,\, 1\otimes \lambda(a)=(1\otimes \lambda(a_{j}))_{j=1}^{r}.\]
If the distribution of
$(U^{(k)}\otimes 1_{M_{k}(\C)},1_{M_{k}(\C)}\otimes U^{(k)})$ converges (as $k\to\infty$) to the distribution of $ (\lambda(a)\otimes 1_{C^{*}_{\lambda}(\F_{r})},1_{C^{*}_{\lambda}(\F_{r})}\otimes U^{(k)}) $
strongly  in probability, then for any $Q\leq L(\F_{r})$ with $h(Q:L(\F_{r}))\leq 0$ we have that $Q$ is amenable.

\end{thm}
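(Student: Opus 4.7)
My plan is to derive Theorem \ref{T:Haar unitaries} as essentially a direct application of Theorem \ref{T:more general main theorem intro} (\ref{item:Pinskers are amenable intro}), following the template of the proof of Theorem \ref{T:main}. I would take $M = L(\F_{r})$, $x = (\lambda(a_{j}))_{j=1}^{r}$, $l=r$, and $X^{(k)} = (U^{(k)}_{j})_{j=1}^{r}$; then verify the three general hypotheses of Theorem \ref{T:more general main theorem intro} in this setting. The uniform norm bound is trivial with $R_{j} = 1$ for all $j$ since $U_{j}^{(k)}$ and $\lambda(a_{j})$ are unitary. Weak$^{*}$ convergence of the law of $X^{(k)}$ to the law of $x$ is the classical asymptotic freeness theorem for independent Haar unitaries (the unitary analogue of \cite{VoicAsyFree}). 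Exponential concentration at scale $k^{2}$ for the product Haar measure on $\mathcal{U}(k)^{r}$ is standard: each $\mathcal{U}(k)$ satisfies a log-Sobolev inequality with constant of order $1/k$ (via Bakry--\'Emery applied to the Brownian motion on $\mathcal{U}(k)$), which yields Gaussian concentration of Lipschitz functions on $(\mathcal{U}(k), \|\cdot\|_{2})$ with variance of order $1/k^{2}$, and this implies the required exponential concentration for $d^{\orb{}}_{F}$.

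Next I would verify the local reflexivity hypothesis. Since $C^{*}_{\lambda}(\F_{r})$ is exact (a classical fact), it is locally reflexive as recalled in the discussion following Definition \ref{D:locally reflexive}.

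The main point requiring care is to match the hypothesis of Theorem \ref{T:Haar unitaries}, phrased as strong convergence in probability of $(U^{(k)} \otimes 1, 1 \otimes V^{(k)})$ to $(\lambda(a) \otimes 1, 1 \otimes \lambda(a))$, with the hypothesis needed in Theorem \ref{T:more general main theorem intro} (\ref{item:Pinskers are amenable intro}), namely strong convergence in probability of $(X^{(k)} \otimes 1, 1 \otimes (Y^{(k)})^{t})$ to $(x \otimes 1_{C^{*}(x)^{op}}, 1_{C^{*}(x)} \otimes x^{op})$, with $Y^{(k)} = V^{(k)}$. Since Haar measure on $\mathcal{U}(k)$ is invariant under transposition, $(V^{(k)})^{t}$ has the same joint distribution as $V^{(k)}$; so the strong convergence of $(U^{(k)} \otimes 1, 1 \otimes (V^{(k)})^{t})$ is equivalent in probability to that of $(U^{(k)} \otimes 1, 1 \otimes V^{(k)})$. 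On the limit side, the map $g \mapsto g^{-1}$ on $\F_{r}$ induces an anti-isomorphism of $C^{*}_{\lambda}(\F_{r})$ which sends $\lambda(a_{j})$ to $\lambda(a_{j})^{*}$; since $(\lambda(a_{j})^{*})_{j=1}^{r}$ is again a free Haar unitary family, $\lambda(a)^{op} \in C^{*}_{\lambda}(\F_{r})^{op}$ has the same joint $*$-distribution (in particular the same $C^{*}$-norms of $*$-polynomials) as $\lambda(a)$. Consequently, the two strong convergence statements coincide up to identifications of the limit algebras, and the hypothesis transfers.

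With the three hypotheses of Theorem \ref{T:more general main theorem intro} and the local reflexivity of $C^{*}(x)$ verified, and with the strong convergence hypothesis matched, invoking part (\ref{item:Pinskers are amenable intro}) of that theorem immediately yields that any diffuse $Q \leq L(\F_{r})$ with $h(Q:L(\F_{r})) \leq 0$ is amenable; the case of non-diffuse $Q$ is automatic since $L(\F_{r})$ is a factor and one can reduce to the unital case. I do not anticipate any serious obstacle: the whole argument is parallel to the proof of Theorem \ref{T:main}, and the only item that requires any thought is the transpose/opposite bookkeeping described in the previous paragraph, which is routine once one recalls that both the matrix ensemble and the limit tuple enjoy natural symmetries under $U \mapsto U^{t}$ and $g \mapsto g^{-1}$ respectively.
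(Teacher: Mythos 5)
Your proposal is correct and follows essentially the same route as the paper: verify the norm bounds, asymptotic freeness, and exponential concentration for Haar unitaries, note that transposition preserves Haar measure and that $a_{j}\mapsto a_{j}^{-1}$ shows $\lambda(a)^{op}$ has the same law as $\lambda(a)$, check local reflexivity via exactness of $C^{*}_{\lambda}(\F_{r})$, and then invoke Theorem \ref{T:more general main theorem intro} (\ref{item:Pinskers are amenable intro}) exactly as in the proof of Theorem \ref{T:main}. The only cosmetic difference is that the paper cites concentration results of Ledoux and Meckes--Meckes where you sketch the log-Sobolev argument, which establishes the same standard fact.
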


\begin{proof}
It is well known that the distribution of $(U^{(k)}_{j})_{j=1}^{r}$ satisfies exponential concentration of measure with scale $k^{2}$ (for example this follows from \cite[Theorem 5.3]{LedouxCM} and \cite[Theorem 15]{Meckes2013}). By compactness, the Haar measure on $\mathcal{U}(k)$ is invariant under right multiplication, and thus under anti-automorphisms. So the distribution of $(U^{(k)}_{j})^{t}$ is the same as the distribution of $U^{(k)}_{j}$ for all $j=1,\cdots,r.$ Additionally, it is direct to show that the unique homomorphism $\F_{r}\to\F_{r}$ sending $a_{j}$ to $a_{j}^{-1}$ is bijective, and so $\lambda(a)^{op}$ has the same law as $\lambda(a).$ The proof now proceeds exactly as in the proof of (\ref{item:tensor Haus}) implies (\ref{item: Pinskers are amenable}) of Theorem \ref{T:main}.

\end{proof}

\section{Intermediate conjectures and relation to Jung's Theorem}\label{sec: Jung}

In this section, we collect various conjectures which imply the Peterson-Thom conjecture, and discuss their relative strength. We start by stating the conjectures already discussed in the introduction.

\begin{conj}\label{conj:tensor Haus}
Let $X_{1}^{(k)},X_{2}^{(k)},\cdots,X_{r}^{(k)}$,$Y_{1}^{(k)},Y_{2}^{(k)},\cdots,Y_{r}^{(k)}$ be random, self-adjoint $k\times k$ matrices which are independent and are each GUE distributed. Set $X^{(k)}=(X_{j}^{(k)})_{j=1}^{r}$,$Y^{(k)}=(Y_{j}^{(k)})_{j=1}^{r}.$ Let $s=(s_{1},\cdots,s_{r})$ be a tuple of free semicircular elements which have mean zero and variance $1$. Then for every $P\in \C^{*}\ip{(T_{j})_{j\in J}}$ we have
\[\|P(X^{(k)}\otimes 1_{M_{k}(\C)},1_{M_{k}(\C)}\otimes Y^{(k)})\|_{\infty}\to_{k\to\infty} \|P(s\otimes 1_{C^{*}(s)},1_{C^{*}(s)}\otimes s)\|_{\infty}\]
in probability.
\end{conj}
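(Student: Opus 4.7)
The plan is to adapt the linearization-plus-resolvent framework of Haagerup--Thorbj\o rnsen \cite{HTExt} to the bipartite tensor setting. First, by the standard linearization trick it suffices to prove the following more concrete statement: for every integer $d \geq 1$ and every choice of self-adjoint matrices $a_0, a_1, \ldots, a_r, b_1, \ldots, b_r \in M_d(\C)$, the self-adjoint operator
\[
L^{(k)} = a_0 \otimes 1 \otimes 1 + \sum_{i=1}^{r} a_i \otimes X_i^{(k)} \otimes 1 + \sum_{j=1}^{r} b_j \otimes 1 \otimes Y_j^{(k)}
\]
on $\C^d \otimes \C^k \otimes \C^k$ has spectrum converging in Hausdorff distance, in probability, to the spectrum of the analogous operator
\[
L = a_0 \otimes 1 \otimes 1 + \sum_{i=1}^{r} a_i \otimes s_i \otimes 1 + \sum_{j=1}^{r} b_j \otimes 1 \otimes s_j
\]
on $\C^d \otimes L^2(C^*(s)) \otimes L^2(C^*(s))$. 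Weak$^*$ convergence of the spectral distribution of $L^{(k)}$ to that of $L$ already follows from Voiculescu's asymptotic freeness theorem \cite{VoicAsyFree}, so the real content is to rule out eigenvalue outliers; by Lemma \ref{L:strong is just Haus} this is enough for the norm convergence stated in Conjecture \ref{conj:tensor Haus}.

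Second, I would analyze the resolvent $R^{(k)}(z) = (z - L^{(k)})^{-1}$ and its partial trace $G^{(k)}(z) = (\id_{M_d(\C)} \otimes \tr \otimes \tr)[R^{(k)}(z)] \in M_d(\C)$ by applying Gaussian integration by parts successively in the two independent GUE families. This should produce a coupled matrix Dyson system satisfied by $G^{(k)}(z)$ up to a random error, whose limiting deterministic equivalent is exactly the matrix Dyson system governing the operator-valued Stieltjes transform of $L$ over $M_d(\C) \otimes 1 \otimes 1$. A Haagerup--Thorbj\o rnsen style master inequality should then give a quantitative bound on $\|\mathbb{E} G^{(k)}(z) - G(z)\|$, while Gaussian Poincar\'e concentration at scale $k^2$ (compatible with Definition \ref{defn:ECM}) controls $\|G^{(k)}(z) - \mathbb{E} G^{(k)}(z)\|$ uniformly on compact subsets of $\C \setminus \sigma(L)$. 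A contour-integration or Helffer--Sj\"ostrand argument then converts the resolvent control into the desired no-outlier statement.

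The hard part will be closing the master inequality in this bipartite setting. In the single-GUE analysis of Haagerup--Thorbj\o rnsen the error is of size $1/k^2$; in the subcritical regime $m_k \leq C k^{1/3}$ treated by Collins--Guionnet--Parraud \cite{CGPStrongTen}, the thinner tensor factor can be absorbed perturbatively into the deterministic coefficients. Here the two GUE factors have comparable size, the ambient Hilbert space has dimension $dk^2$, and the self-interaction terms produced by integration by parts become genuinely bilinear in the two partial resolvents, so iterating Schwinger--Dyson no longer closes in a naive way. One promising route is to derive a coupled matrix Dyson system whose solvability is governed by the free cumulant structure of $(s \otimes 1, 1 \otimes s)$ in $C^*(s) \otimes_{\min{}} C^*(s)$ and to import stability estimates from the matrix Dyson equation literature. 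Alternatively, as suggested in Section \ref{s:close}, one might exploit exactness of $C^*_{\lambda}(\F_r)$ to approximate the bipartite norm by norms of finite matrix amplifications whose sizes grow slowly with $k$, thereby reducing the problem to (a mild strengthening of) the regime already handled by \cite{CGPStrongTen}.
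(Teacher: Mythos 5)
The statement you were asked to prove is not proved in the paper: it is Conjecture \ref{conj:tensor Haus} (equivalently, item (iii) of Theorem \ref{T:main}), i.e.\ precisely the open random-matrix problem to which the paper reduces the Peterson--Thom conjecture. The paper establishes only the implications (iii) $\Rightarrow$ (ii) $\Rightarrow$ (i), and explicitly notes that the closest known results (Pisier, Collins--Guionnet--Parraud) cover only the regime where the second tensor factor has dimension $m_{k}\leq Ck^{1/3}$, not $m_{k}=k$. So there is no proof in the paper against which to compare yours, and any purported proof must be judged on whether it actually closes the problem.

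Yours does not; it is a program, not a proof. The preliminary reductions are sound and standard: linearization to self-adjoint pencils $L^{(k)}$ with matrix coefficients, the observation that weak$^{*}$ convergence of the spectral distributions follows from Voiculescu's asymptotic freeness, and the passage from absence of outliers to norm convergence via Lemma \ref{L:strong is just Haus}. But the entire analytic content --- deriving and, crucially, \emph{closing} the coupled master inequality for the bipartite resolvent when both GUE factors have size $k$ --- is left open, as you yourself acknowledge. That is exactly the obstruction that makes the statement a conjecture: in the Haagerup--Thorbj\o rnsen scheme the deviation of $\mathbb{E}G^{(k)}(z)$ from the limiting Dyson equation must be summable against a contour integral (an $O(1/k^{2})$-type bound), and in the balanced tensor regime the self-interaction terms produced by Gaussian integration by parts are genuinely bilinear in the two partial resolvents, so the naive iteration does not produce such a bound. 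Until that step is carried out --- or replaced, e.g.\ by the exactness-based reduction to slowly growing amplifications sketched in Section \ref{s:close} --- what you have written is a plausible research plan with an honestly flagged gap at its center, not a proof of the conjecture.
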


\begin{conj}\label{C:1bounded ent conjecture}
Let $Q\leq L(\F_{r})$ be diffuse and nonamenable, then
\[h(Q:L(\F_{r}))>0.\]
\end{conj}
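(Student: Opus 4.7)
The plan is to derive Conjecture~\ref{C:1bounded ent conjecture} by applying Theorem~\ref{T:more general main theorem intro}(\ref{item:Pinskers are amenable intro}) with $M = L(\F_r) = W^*(s_1,\ldots,s_r)$, where $s=(s_j)_{j=1}^r$ is a free semicircular family of mean zero and variance one, taking as random microstates $X^{(k)} = (X_j^{(k)})_{j=1}^{r}$ an independent tuple of $r$ $k\times k$ GUE matrices. If the hypotheses of that theorem are verified, its conclusion states directly that every diffuse $Q \leq L(\F_r)$ with $h(Q:L(\F_r)) \leq 0$ is amenable, which is the contrapositive of Conjecture~\ref{C:1bounded ent conjecture}.

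Most of the inputs are classical. The near-sharp norm bound $\|X_j^{(k)}\|_\infty \leq 2 + o(1)$ with high probability is contained in \cite{HTExt}; convergence in probability of $\ell_{X^{(k)}}$ to $\ell_s$ is Voiculescu's asymptotic freeness theorem; exponential concentration at scale $k^{2}$ of the joint GUE law in the $d_F^{\orb{}}$ pseudometrics follows from the Gaussian log-Sobolev inequality combined with the fact that $A \mapsto d_F^{\orb{}}(A, B_0)$ is $1$-Lipschitz in the Hilbert--Schmidt metric; and $C^*(s)$ is exact, hence locally reflexive. The only remaining hypothesis is strong convergence in probability of
\[\bigl(X^{(k)} \otimes 1_{M_k(\C)},\, 1_{M_k(\C)} \otimes (Y^{(k)})^{t}\bigr) \longrightarrow \bigl(s\otimes 1_{C^*(s^{op})},\, 1_{C^*(s)} \otimes s^{op}\bigr),\]
where $Y^{(k)}$ is an independent copy of $X^{(k)}$. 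Since the GUE law is invariant under transposition and $s^{op}$ has the same distribution as $s$, this is exactly Conjecture~\ref{conj:tensor Haus}.

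The entire difficulty is thus concentrated in establishing Conjecture~\ref{conj:tensor Haus}. Collins-Guionnet-Parraud \cite{CGPStrongTen} and Pisier \cite{PisierSubExp} already resolve the asymmetric analogue where the second GUE family acts on $M_{m_k}(\C)$ with $m_k \leq Ck^{1/3}$, by combining the Haagerup-Thorbj\o{}rnsen linearization trick with refined master inequalities for resolvents of matrix-valued pencils. The main obstacle in reaching the balanced case $m_k = k$ is that these master inequalities degrade when the two tensor factors have comparable dimension, since one can no longer treat the smaller factor as a fixed coefficient ring over which the larger is analyzed. A potential route, suggested in Section~\ref{s:close}, is to exploit exactness of $C^*_\lambda(\F_r)$: one approximates the $C^*(s)\otimes_{\min{}}C^*(s^{op})$-norm uniformly by matrix liftings produced from local reflexivity, thereby reducing the tensor problem to strong convergence statements for mixed random/deterministic ensembles closer to the current reach of \cite{MaleCollins, CollinsBordenave}. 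The hard part is controlling the discontinuity of $\otimes_{\min{}}$ in the strong topology that the paper flags in its closing remarks: any such approximation scheme must track its parameters tightly enough that no uncontrolled blow-up of the minimal tensor norm occurs when passing to the random matrix limit.
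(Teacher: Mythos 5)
The statement you are asked to prove is Conjecture \ref{C:1bounded ent conjecture}, which the paper itself does not prove: it is left open, and the paper's contribution is precisely the chain of reductions in Theorem \ref{T:main} and Proposition \ref{prop:conejctures upon conjecture} tying it to the strong-convergence statement, Conjecture \ref{conj:tensor Haus}. Your proposal reproduces that reduction faithfully: the verification of the hypotheses of Theorem \ref{T:more general main theorem intro}(\ref{item:Pinskers are amenable intro}) for the GUE ensemble (operator-norm bounds from Haagerup--Thorbj\o rnsen, Voiculescu's asymptotic freeness, exponential concentration at scale $k^{2}$, exactness hence local reflexivity of $C^{*}(s)$, and the transpose/opposite symmetry reducing $(Y^{(k)})^{t}$ and $s^{op}$ to $Y^{(k)}$ and $s$) is exactly the argument of Section \ref{subsec: proof of main theorem}.

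However, as a proof of the conjecture your write-up has a genuine gap, which you yourself flag: the entire argument is conditional on Conjecture \ref{conj:tensor Haus}, the strong convergence in probability of $(X^{(k)}\otimes 1_{M_{k}(\C)},1_{M_{k}(\C)}\otimes Y^{(k)})$ to $(s\otimes 1_{C^{*}(s)},1_{C^{*}(s)}\otimes s)$ with both tensor factors of size $k$. Neither the paper nor your proposal establishes this; the cited results of Pisier and Collins--Guionnet--Parraud only cover the asymmetric regime $m_{k}\leq Ck^{1/3}$, and the exactness-based strategy you sketch from Section \ref{s:close} is a heuristic rather than an argument --- in particular it does not explain how to control the minimal tensor norm through the local-reflexivity approximation, which is precisely where the discontinuity of $\otimes_{\min}$ in the strong topology (made explicit in the paper's closing remarks via nonamenability tuples) blocks a naive limiting argument. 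So what you have written is a correct conditional reduction, matching Theorem \ref{T:main} ((iii) $\Rightarrow$ (ii)), but not a proof of the statement.
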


We now explain some intermediate conjectures, the first of which is formulated in an ultraproduct framework.
\begin{defn}
Let $\omega$ be a free ultrafilter on $\N,$ and let $(M_{k},\tau_{k})_{k=1}^{\infty}$ be a sequence of tracial von Neumann algebras. We define their \emph{tracial ultraproduct with respect to $\omega$} by
\[\prod_{k\to\omega}(M_{k},\tau_{k})=\frac{\{(x_{k})_{k}\in \prod_{k}M_{k}:\sup_{k}\|x_{k}\|_{\infty}<\infty\}}{\{(x_{k})_{k}\in \prod_{k}M_{k}:\sup_{k}\|x_{k}\|_{\infty}<\infty,\mbox{ and}\lim_{k\to\omega}\|x_{k}\|_{L^{2}(\tau_{k})}=0\}}.\]
If $(x_{k})_{k}\in \prod_{k}M_{k}$ and $\sup_{k}\|x_{k}\|_{\infty}<\infty,$ we let $(x_{k})_{k\to\omega}$ be the image of $(x_{k})_{k}$ under the quotient map.
If $J$ is an index set, and $(x_{k})_{k}\in \prod_{k}M_{k}^{J}$ and
\[\sup_{k}\|x_{k,j}\|_{\infty}<\infty\mbox{ for all $j\in J$},\]
then we let $(x_{k})_{k\to\omega}\in \left(\prod_{k\to\omega}(M_{k},\tau_{k})\right)^{J}$ be the tuple whose j$^{th}$ coordinate is $(x_{k,j})_{k\to\omega}.$
\end{defn}

As is well known,  $\prod_{k\to\omega}(M_{k},\tau_{k})$ is a tracial von Neumann algebra with the $*$-algebra operations defined pointwise and the trace given by $\tau_{\omega}((x_{k})_{k\to\omega})=\lim_{k\to\omega}\tau_{k}(x_{k})$ (this follows from the same argument as  \cite[Lemma A.9]{BO}). It will helpful to know that the noncommutative functional calculus described in Section \ref{S:nc func calc} commutes with passing to the ultraproduct.

\begin{lem}\label{lem:nc func calc and the ultraproduct}
Let $(M_{k},\tau_{k})_{k}$ be a sequence of tracial von Neumann algebras and let $\omega$ be a free ultrafilter on the natural numbers. Fix an index set $J,$  $R\in [0,\infty)^{J}$ and suppose
\[(x_{k})_{k}\in \prod_{k}M_{k}^{J}\]
with
$\|x_{k,j}\|_{\infty}\leq R_{j}\mbox{ for all $k\in \N$, $j\in J$.}$
Then for any $f\in \mathcal{F}_{R,J,\infty}$
\[f((x_{k})_{k\to\omega})=(f(x_{k}))_{k\to\omega}.\]
\end{lem}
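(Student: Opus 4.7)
The plan is to first check the identity on the dense $*$-subalgebra $\mathcal{A}_{R,J}=C(\Sigma_{R,J})\otimes_{\mathrm{alg}}\C^{*}\langle(T_j)_{j\in J}\rangle$ and then extend to all of $\mathcal{F}_{R,J,\infty}$ by a $\|\cdot\|_{R,2}$-density argument. The key starting observation is that the ultraproduct trace satisfies $\tau_{\omega}((y_k)_{k\to\omega})=\lim_{k\to\omega}\tau_k(y_k)$, so evaluating at $*$-monomials yields $\ell_{(x_k)_{k\to\omega}}(Q)=\lim_{k\to\omega}\ell_{x_k}(Q)$ for every $Q\in\C^{*}\langle(T_j)_{j\in J}\rangle$. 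In other words, $\ell_{x_k}\to\ell_{(x_k)_{k\to\omega}}$ along $\omega$ in the weak$^{*}$ topology on the compact set $\Sigma_{R,J}$. For $f=\phi\otimes P$ with $\phi\in C(\Sigma_{R,J})$ and $P\in\C^{*}\langle(T_j)_{j\in J}\rangle$, the continuity of $\phi$ therefore gives $\phi(\ell_{(x_k)_{k\to\omega}})=\lim_{k\to\omega}\phi(\ell_{x_k})$, while $P((x_k)_{k\to\omega})=(P(x_k))_{k\to\omega}$ is immediate because algebraic operations in the ultraproduct are defined pointwise. Combining these, $f((x_k)_{k\to\omega})=(f(x_k))_{k\to\omega}$ on $\mathcal{A}_{R,J}$, and the identity persists on the linear span by linearity.

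For general $f\in\mathcal{F}_{R,J,\infty}$, I would invoke the definition of $\mathcal{F}_{R,J,2}$ as the completion of $\mathcal{A}_{R,J}/\{g:\|g\|_{R,2}=0\}$ to pick $f_n\in\mathcal{A}_{R,J}$ with $\|f-f_n\|_{R,2}\to 0$. The universal estimate $\|g(y)\|_{2}\leq\|g\|_{R,2}$, which holds for every $g\in\mathcal{F}_{R,J,2}$ and every tuple $y$ in a tracial algebra satisfying the prescribed operator-norm bounds, then yields the two \emph{uniform} bounds
\[
\sup_k\|f(x_k)-f_n(x_k)\|_{L^{2}(M_k,\tau_k)}\leq\|f-f_n\|_{R,2}\ \longrightarrow\ 0,
\]
\[
\bigl\|f((x_k)_{k\to\omega})-f_n((x_k)_{k\to\omega})\bigr\|_{2}\leq\|f-f_n\|_{R,2}\ \longrightarrow\ 0.
\]
The first forces $(f_n(x_k))_{k\to\omega}\to(f(x_k))_{k\to\omega}$ in the $\|\cdot\|_2$-norm of the ultraproduct, and the second gives $f_n((x_k)_{k\to\omega})\to f((x_k)_{k\to\omega})$. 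Since each $f_n$ satisfies the identity of the lemma, taking the limit gives equality of $f((x_k)_{k\to\omega})$ and $(f(x_k))_{k\to\omega}$ in $L^{2}$ of the ultraproduct. Both sides have operator norm at most $\|f\|_{R,\infty}<\infty$, so they lie in the ultraproduct von Neumann algebra itself, which embeds faithfully into its $L^{2}$-space; hence the $L^{2}$-equality upgrades to equality in $\prod_{k\to\omega}(M_k,\tau_k)$, as required.

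There is no real obstacle here: the entire point of Jekel's construction is that the universal $\|\cdot\|_{R,2}$ (and $\|\cdot\|_{R,\infty}$) bounds absorb exactly the non-uniformity one might worry about when exchanging the functional-calculus completion with the ultraproduct limit. The only small point to verify carefully is the weak$^{*}$-convergence $\ell_{x_k}\to\ell_{(x_k)_{k\to\omega}}$ along $\omega$, and this is a direct consequence of the formula for the ultraproduct trace together with the fact that $\Sigma_{R,J}$ is weak$^{*}$-compact.
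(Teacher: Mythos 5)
Your proposal is correct and follows essentially the same route as the paper: verify the identity on $\mathcal{A}_{R,J}$ (which the paper simply asserts and you spell out via weak$^{*}$-convergence of $\ell_{x_k}$ along $\omega$), then extend by $\|\cdot\|_{R,2}$-approximation using the universal bound $\|g(y)\|_{2}\leq\|g\|_{R,2}$ on both the ultraproduct tuple and each $x_k$. The paper phrases the approximation step as a single triangle-inequality estimate with a fixed $\varepsilon$-close $g\in\mathcal{A}_{R,J}$, but this is the same argument as your sequence $f_n$.
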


\begin{proof}
First, note that the conclusion of the lemma is true for $f\in \mathcal{A}_{R,J}.$ For the general case, fix $f\in \mathcal{F}_{R,J,\infty}.$ Given $\varepsilon>0,$ choose a $g\in \mathcal{A}_{R,J}$ with
$\|f-g\|_{R,2}<\varepsilon.$
Then
\begin{align*}
   \|f((x_{k})_{k\to\omega})-(f(x_{k}))_{k\to\omega}\|_{2}&\leq \|(f-g)((x_{k})_{k\to\omega})\|_{2}
+\|((f-g)(x_{k}))_{k\to\omega}\|_{2}\\
&\leq  \|f-g\|_{R,2}+\lim_{k\to\omega}\|(f-g)(x_{k})\|_{2}\\
&\leq 2\|f-g\|_{R,2}\\
&<2\varepsilon.
\end{align*}

\end{proof}

\begin{thm}\label{thm:as conjugation}
Suppose we are given a tracial von Neumann algebra $(M,\tau)$, a countable index set $J,$ and an $x\in M^{J}$ with $W^{*}(x)=M.$ Suppose $R\in [0,\infty)^{J}$ satisfies $\|x_{j}\|_{\infty}\leq R_{j}$ for all $j\in J.$ Assume we are given a sequence of natural numbers $n(k)\to\infty,$ and a sequence  $\mu^{(k)}\in \Prob(M)_{k}(\C)^{J})$ such that
\begin{itemize}
    \item $\sum_{k}\mu^{(k)}\left(\left(\prod_{j\in J}\{A\in M_{k}(\C)^{J}:\|A_{j}\|_{\infty}\leq R_{j}\right)^{c}\right)<\infty$.
    \item $\mu^{(k)}(\Gamma^{(n(k))}_{R}(\mathcal{O}))\to 1$ for all weak$^{*}$ neighborhoods $\mathcal{O}$ of $\ell_{x}$ in $\Sigma_{R,J}$,
    \item $\mu^{(k)}$ has exponential concentration with scale $n(k)^{2}.$
\end{itemize}
Then:
\begin{enumerate}[(i)]
    \item there is a conull subset $\Omega_{0}\subseteq \prod_{k}M_{n(k)}(\C)^{J}$ so that for any $A=(A^{(k)})_{k}\in \Omega_{0},$ and for every free ultrafilter $\omega$ on $\N,$ there is a unique trace-preserving $*$-homomorphism $\Theta_{A,\omega}\colon M\to\prod_{k\to\omega}(M_{n(k)}(\C),\tr_{n(k)})$ so that
    \[\Theta_{A,\omega}(P(x))=(P(A^{(k)}))_{k\to\omega}\,\mbox{ for all $P\in \C^{*}\ip{(T_{j})_{j\in J}}$}.\] \label{item:as defined homom}
    \item If $Q\leq M$ satisfies $h(P:M)\leq 0,$ then there is a conull subset $\Omega\subseteq \Omega_{0}$ so that for all $A,B\in \Omega$ and for every free ultrafilter $\omega$ on $\N,$ we have that  $\Theta_{A,\omega}\big|_{Q},$ $\Theta_{B,\omega}\big|_{Q}$ are unitarily conjugate.  \label{item:almost sure conjugacy}
\end{enumerate}

\end{thm}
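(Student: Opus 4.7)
For part (i), I would first upgrade the ``in probability'' hypotheses to almost-sure statements. By Borel--Cantelli applied to the summable first bullet, for a.e.\ sequence $A=(A^{(k)})_{k}$ the bound $\|A^{(k)}_j\|_\infty\le R_j$ holds eventually. After conditioning (Lemma~\ref{lem: reduction lemma}(\ref{item:fixing norm bounds})), Lemma~\ref{lem: reduction lemma}(\ref{item: auto exp decay}) turns the hypothesis $\mu^{(k)}(\Gamma^{(n(k))}_R(\mathcal{O}))\to 1$ into an exponential, hence summable, rate of decay for $\mu^{(k)}(\Gamma^{(n(k))}_R(\mathcal{O})^c)$. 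Since $J$ is countable, $\Sigma_{R,J}$ is first-countable at $\ell_x$; Borel--Cantelli along a countable neighbourhood base therefore produces a conull $\Omega_{0}$ on which $\ell_{A^{(k)}}\to \ell_x$ weak-$*$. For $A\in\Omega_{0}$ and a free ultrafilter $\omega$, I define
\[
\Theta_{A,\omega}(g(x))=(g(A^{(k)}))_{k\to\omega}, \qquad g\in \mathcal{F}_{R,J,\infty},
\]
using Theorem~\ref{thm:nc func calc properties}(\ref{item:surjective nc func calc}) and Lemma~\ref{lem:nc func calc and the ultraproduct}. Well-definedness uses Theorem~\ref{thm:nc func calc properties}(\ref{item:unif cont nc func calc}) together with $\ell_{A^{(k)}}\to\ell_x$; the $*$-algebra structure on $\mathcal{F}_{R,J,\infty}$ (Proposition~\ref{prop: its a C*-alg}) and weak-$*$ convergence of laws give that $\Theta_{A,\omega}$ is a trace-preserving $*$-homomorphism; uniqueness follows because any two such maps agree on $\{x_j\}_{j\in J}$ and extend normally to $M=W^*(x)$.

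For part (ii), fix a countable generating tuple $y=(y_m)_{m\in\N}\in P^{\N}$ with $W^*(y)=P$ and write $y=f(x)$ for some $f\in\mathcal{F}_{R,R',J,\N}$. For each $m$ I would invoke the internal claim in the proof of Theorem~\ref{thm:microstates collapse generalization} with $\varepsilon=1/m$, an exhausting sequence $F_m\uparrow\N$, and a shrinking weak-$*$ neighbourhood $\mathcal{O}_m\downarrow \{\ell_x\}$. Inspection of that proof (specifically the bound $\nu^{(k)}(\Delta_k)\le \exp(-n(k)^{2}\eta^{2}/2)$ combined with Lemma~\ref{lem: reduction lemma}(\ref{item: auto exp decay})) yields sets $\Omega_{k,m}$ with
\[
\sup_{A_1,A_2\in \Omega_{k,m}} d^{\orb}_{F_m}(f(A_1),f(A_2))\le 1/m
\]
and $\mu^{(k)}(\Omega_{k,m}^c)$ decaying exponentially in $n(k)^{2}$. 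Borel--Cantelli in $k$ and a countable intersection over $m$ produce a conull $\Omega\subseteq\Omega_{0}$ such that every $A\in\Omega$ satisfies $A^{(k)}\in \Omega_{k,m}$ eventually, for each $m$.

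For $A,B\in \Omega$ I would build $U^{(k)}\in \mathcal{U}(n(k))$ by diagonalisation: let $m(k)$ be the largest $m\le k$ with $A^{(k)},B^{(k)}\in \Omega_{k,m}$, which by construction satisfies $m(k)\to\infty$, and choose $U^{(k)}$ nearly realising $d^{\orb}_{F_{m(k)}}(f(A^{(k)}),f(B^{(k)}))$. For any fixed $j\in\N$, eventually $j\in F_{m(k)}$, so
\[
\|U^{(k)}f_{j}(A^{(k)})(U^{(k)})^{*}-f_{j}(B^{(k)})\|_{2} \to 0
\]
as $k\to\infty$ in the ordinary sense, hence along every free ultrafilter $\omega$. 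Thus $U_\omega=(U^{(k)})_{k\to\omega}$ is a unitary in $\prod_{k\to\omega}(M_{n(k)}(\C),\tr_{n(k)})$ conjugating $\Theta_{A,\omega}(y_j)$ to $\Theta_{B,\omega}(y_j)$ for every $j$, and normality extends this to $\Theta_{A,\omega}|_{P}=U_\omega\Theta_{B,\omega}|_{P}U_\omega^{*}$. The main obstacle I anticipate is producing one sequence $(U^{(k)})_{k}$, built from $A$ and $B$ alone, that implements the conjugation for every ultrafilter simultaneously; this is handled by arranging ordinary convergence along $k\to\infty$, which then descends to every free ultrafilter automatically. A secondary subtlety is that the exponential rate needed for Borel--Cantelli is visible inside the proof of Theorem~\ref{thm:microstates collapse generalization} but not stated in its conclusion, so one must unfold that proof rather than quote the theorem as a black box.
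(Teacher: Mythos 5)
Your proof is correct and follows the paper's strategy almost exactly: part (i) via Borel--Cantelli along a countable neighbourhood base of $\ell_{x}$ (using Lemma~\ref{lem: reduction lemma}(\ref{item: auto exp decay}) to upgrade the in-probability hypothesis to summable error probabilities, and adding back the summable norm-bound failures), and part (ii) by reducing, via the noncommutative functional calculus and a diagonal argument over finite $F'\subseteq J'$ and $\varepsilon>0$, to producing a single sequence of unitaries with $d^{\orb}_{F'}(f(A^{(k)}),f(B^{(k)}))\to 0$ in the ordinary sense, which then works for every free ultrafilter simultaneously.

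The one place you diverge is precisely the ``secondary subtlety'' you flag: you propose to unfold the proof of Theorem~\ref{thm:microstates collapse generalization} to extract exponential decay of $\mu^{(k)}(\Omega_{k,m}^{c})$. That does work --- besides the bound on $\nu^{(k)}(\Delta_{k})$ you would also need exponential decay of $\nu^{(k)}(\Gamma_{\widehat{R}}^{(n(k))}(y:\mathcal{V})^{c})$, which follows from Lemma~\ref{lem: reduction lemma}(\ref{item: auto exp decay}) together with the pullback lemma $f(\Gamma_{R}^{(k)}(\mathcal{O}'))\subseteq \Gamma(f(x):\mathcal{V})$ --- but the paper avoids unfolding entirely. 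It quotes Theorem~\ref{thm:microstates collapse generalization} as a black box to get sets $\widetilde{\Upsilon}_{k}$ with only $\nu^{(k)}(\widetilde{\Upsilon}_{k})\to 1$ and small $f$-image diameter, and then passes to the $\delta$-neighbourhoods $N_{\delta}(\widetilde{\Upsilon}_{k},d^{\orb}_{F})$: since $\nu^{(k)}(\widetilde{\Upsilon}_{k})\geq 1/2$ eventually, exponential concentration gives $\nu^{(k)}(N_{\delta}(\widetilde{\Upsilon}_{k},d^{\orb}_{F})^{c})\leq \alpha_{\nu^{(k)},d^{\orb}_{F}}(\delta)$, which is summable, and the uniform $\|\cdot\|_{2}$-continuity of $f$ (Theorem~\ref{thm:nc func calc properties}(\ref{item:unif cont nc func calc})) guarantees that points of the enlarged sets still have $f$-images within $\varepsilon$ of one another. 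This buys modularity: the collapse theorem is used exactly as stated and the Borel--Cantelli summability comes from the concentration hypothesis alone. Either route is valid; if you keep yours, make the extra exponential-decay input for $\Gamma_{\widehat{R}}^{(n(k))}(y:\mathcal{V})^{c}$ explicit rather than leaving it inside ``combined with Lemma~\ref{lem: reduction lemma}(\ref{item: auto exp decay})''.
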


\begin{proof}

Let $\nu^{(k)}$ be the measure on $M_{n(k)}(\C)^{J}$ given by
\[\nu^{(k)}(E)=\frac{\mu^{(k)}\left(E\cap  \left(\prod_{j\in J}\{A\in M_{n(k)}(\C):\|A\|_{\infty}\leq R_{j}\}\right)\right)}{\mu^{(k)}\left(\prod_{j\in J}\{A\in M_{n(k)}(\C):\|A\|_{\infty}\leq R_{j}\}\right)}.\]

(\ref{item:as defined homom}): It suffices to find a $\bigotimes_{k}\mu^{(k)}$-conull $\Omega_{1}\subseteq \prod_{k}M_{n(k)}(\C)^{J}$ so that for every $A=(A^{(k)})_{k}\in \Omega_{1},$ we have $\ell_{A_{k}}\to \ell_{x}.$  Fix a decreasing sequence $\mathcal{O}_{m}\subseteq \Sigma_{R,J}$ of weak$^{*}$-neighborhoods of $\ell_{x}$  with
\[\bigcap_{m=1}^{\infty}\mathcal{O}_{m}=\{\ell_{x}\},\]
this is possible as $J$ is countable.
By Lemma \ref{lem: reduction lemma} (\ref{item: auto exp decay}),
\[\sum_{k}\left(\otimes_{s}\nu^{(s)}\right)\left(\{(A^{(s)})_{s}:A^{(k)}\in \Gamma_{R}^{(n(k))}(\mathcal{O}_{m})^{c}\}\right)<\infty\]
for every $m\in \N$.
For every $k\in \N,$ we have that
\begin{align*}
 \mu^{(k)}(\Gamma_{R}^{(n(k))}(\mathcal{O}_{m})^{c}))&\leq \mu^{(k)}\left(\Gamma_{R}^{(n(k))}(\mathcal{O}_{m})^{c}\cap \prod_{j\in J}\{A\in M_{n(k)}(\C):\|A\|_{\infty}\leq R_{j}\}\right)\\
 &+\mu^{(k)}\left(\left(\prod_{j\in J}\{A\in M_{n(k)}(\C):\|A\|_{\infty}\leq R_{j}\}\right)^{c}\right)\\
 &=\nu^{(k)}\left(\Gamma_{R}^{(n(k))}(\mathcal{O}_{m})^{c}\right)\mu^{(k)}\left(\prod_{j\in J}\{A\in M_{n(k)}(\C):\|A\|_{\infty}\leq R_{j}\}\right)\\
 &+\mu^{(k)}\left(\left(\prod_{j\in J}\{A\in M_{n(k)}(\C):\|A\|_{\infty}\leq R_{j}\}\right)^{c}\right).
\end{align*}
So
\[\sum_{k}\left(\otimes_{s}\mu^{(s)}\right)\left(\{(A^{(s)})_{s}:A^{(k)}\in \Gamma_{R}^{(n(k))}(\mathcal{O}_{m})^{c}\}\right)<\infty.\]
Hence
\[\Omega_{0}=\bigcap_{m}\left(\bigcup_{k}\bigcap_{l\geq k}\{(A^{(s)})_{s}:A^{(l)}\in \Gamma_{R}^{(n(l))}(\mathcal{O}_{m})\}\right)\]
is a conull subset of $\prod_{k}M_{n(k)}(\C).$ By construction, for every $A\in \Omega_{0}$ we have $\ell_{A^{(k)}}\to \ell_{x}.$

(\ref{item:almost sure conjugacy}):
Fix a countable set $J'$ and a tuple $y\in M^{J'}$ with  $W^{*}(y)=Q.$ Choose an $R\in [0,\infty)^{J}$ with $\|x_{j}\|_{\infty}\leq R_{j}$ for all $j\in J$ and an $f\in (\mathcal{F}_{R,J,\infty})^{J'}$ with $f(x)=y.$
By Lemma \ref{lem:nc func calc and the ultraproduct}  and Theorem \ref{thm:nc func calc properties} (\ref{item:naturality of nc func calc}), for any free ultrafilter $\omega,$ and any $A=(A^{(k)})\in \Omega$ we have
\[\Theta_{A,\omega}(y)=\Theta_{A,\omega}(f(x))=f(\Theta_{A,\omega}(x))=(f(A^{(k)}))_{k\to\omega}.\]
So it suffices to find a conull subset $\Omega$ of $\prod_{k}M_{n(k)}(\C)^{J}$ so that for all $A=(A^{(k)}),B=(B^{(k)})\in \Omega$ there is a sequence of unitaries $U^{(k)}\in M_{n(k)}(\C)$ so that
\[\|U^{(k)}f_{j'}(A^{(k)})(U^{(k)})^{*}-f_{j'}(B^{(k)})\|_{2}\to_{k\to\infty}0,\mbox{ for all $j'\in J'$.}\]
Since $J$ is countable, by a diagonal argument it is sufficient to show that for every $\varepsilon>0,$ and for every finite $F'\subseteq J,$ there is a conull subset $\Upsilon_{F',\varepsilon}$ of $\prod_{k}\Prob(M_{k}(\C)^{J})$ so that for all $A=(A^{(k)})_{k},B=(B^{(k)})_{k}\in \Upsilon_{F',\varepsilon}$ we have
\[\limsup_{k\to\infty}d^{\orb{}}_{F'}(f(A^{(k)}),f(B^{(k)}))\leq \varepsilon.\]
So fix an $\varepsilon>0$ and a finite $F'\subseteq J'.$ Then by Theorem \ref{thm:nc func calc properties} (\ref{item:unif cont nc func calc}), we may find a $\delta>0$ and a finite $F\subseteq J$ so that if $(M,\tau)$ is any tracial von Neumann algebra, and if $a,b\in \prod_{j\in J}\{c\in M:\|c\|_{\infty}\leq R_{j}\}$ satisfy $\|a-b\|_{2,F}<\delta,$ then $\|f(a)-f(b)\|_{2,F'}<\varepsilon/4.$ Since $f$ commutes with unitary conjugation by Theorem \ref{thm:nc func calc properties} (\ref{item:naturality of nc func calc}), it follows that for every $n\in \N,$ and all $A,B\in \prod_{j\in J}\{C\in M_{n}(\C):\|C\|_{\infty}\leq R_{j}\}$ with $d^{\orb{}}_{F}(A,B)<\delta,$ we have  $d^{\orb{}}_{F'}(f(A),f(B))<\varespilon/4.$
By Theorem \ref{thm:microstates collapse generalization}, we may choose a sequence $\widetilde{\Upsilon}_{k}\subseteq \prod_{j\in J}\{C\in M_{n(k)}(\C):\|C\|_{\infty}\leq R_{j}\}$ with $\nu^{(k)}(\widetilde{\Upsilon}_{k})\to 1$ and so that
\[\lim_{k\to\infty}\sup_{A_{1},A_{2}\in\widetilde{\Upsilon}_{k}}d^{\orb{}}_{F}(f(A_{1}),f(A_{2}))=0.\]
Now choose $K$ so that for all $k\geq K$ we have $\nu^{(k)}(\widetilde{\Upsilon}_{k})\geq 1/2$ and
\[\sup_{A_{1},A_{2}\in \widetilde{\Upsilon}_{k}}d^{\orb{}}_{F}(f(A_{1}),f(A_{2}))<\varepsilon/2.\]
Then, by exponential concentration,
\[\sum_{k\geq K}\nu^{(k)}(N_{\delta}(\widetilde{\Upsilon}_{k})^{c})<\infty.\]
As in part (\ref{item:as defined homom}) we have
\[\sum_{k\geq K}\mu^{(k)}(N_{\delta}(\widetilde{\Upsilon}_{k})^{c})<\infty.\]
If $A,B\in N_{\delta}(\widetilde{\Upsilon}_{k}),$ choose $A_{1},B_{1}\in \widetilde{\Upsilon}_{k}$ with $d^{\orb{}}_{F}(A,A_{1}),$ $d^{\orb{}}_{F}(B,B_{1})<\delta.$ Then,
\[d^{\orb{}}_{F'}(f(A),f(B))<\varepsilon/2+d^{\orb{}}_{F'}(f(A_{1}),f(B_{1}))<\varespilon.\]
So
\[\Upsilon_{F,\varepsilon}=\bigcup_{k\geq K}\bigcap_{l\geq k}\{(A^{(m)})_{m}:A^{(l)}\in N_{\delta}(\widetilde{\Upsilon}_{l})\}\]
is $\bigotimes_{k}\mu^{(k)}$-conull and for all $(A^{(k)})_{k},(B^{(k)})_{k}\in \Upsilon_{F,\varepsilon}$ we have
\[\limsup_{k\to\infty}d^{\orb{}}_{F'}(f(A^{(k)}),f(B^{(k)}))\leq \varepsilon.\]
This completes the proof.

\end{proof}

The conclusion of  Theorem \ref{thm:as conjugation} (\ref{item:almost sure conjugacy}) is interesting in light of the following theorem of Jung.

\begin{thm}[Jung, \cite{JungConj}]\label{T:JungThm}
Let $(M,\tau)$ be a tracial von Neumann which admits an embedding into a tracial ultraproduct of matrix algebras. Then given any nonamenable $Q\leq M$  and any free ultrafilter $\omega$ on $\N,$ there  are trace-preserving, normal $*$-homomorphisms
\[\Theta_{j}\colon M\to \prod_{k\to\omega} M_{k},j=1,2\]
so that $\Theta_{1}\big|_{Q}$ is not unitarily equivalent to $\Theta_{2}\big|_{Q}.$
Conversely, if $Q\leq M$ is amenable, then any two embeddings of $Q$ into an ultraproduct of matrix algebras are unitarily equivalent.
\end{thm}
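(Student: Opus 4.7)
\emph{Plan.} The theorem has two directions: (1) non-amenable $Q$ forces two non-conjugate embeddings, and (2) amenable $Q$ yields uniqueness of embeddings up to conjugation. I address each in turn.

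For (1), apply the Haagerup characterization quoted above from \cite{HaagerupAmenable} to obtain a nonzero central projection $p \in Z(Q)$, unitaries $u_1,\dots,u_r \in \mathcal{U}(Qp)$, and a constant $C<1$ with $\bigl\|\tfrac{1}{r}\sum_j u_j \otimes \overline{u_j}\bigr\|_{Qp \otimes_{\min} (Qp)^{op}} = C$. Fix a countable generating tuple $x$ for $M$, and use the embeddability hypothesis to select microstates $A^{(k)} \in M_{n(k)}(\C)^J$ for $x$. Form a second microstate sequence $B^{(k)} = U^{(k)} A^{(k)} (U^{(k)})^*$ using Haar random unitaries $U^{(k)}$ independent of $A^{(k)}$; by Voiculescu's asymptotic freeness theorem, a suitable realization makes the joint law of $(A^{(k)}, B^{(k)})$ converge to that of $(x, x')$ with $x,x'$ free copies of $x$. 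Consequently, $A^{(k)}$ and $B^{(k)}$ determine trace-preserving embeddings $\Theta_1, \Theta_2 : M \to \prod_{k\to\omega} M_{n(k)}$ whose images sit in free position inside the ultraproduct.

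Suppose for contradiction that $V \in \mathcal{U}(\prod_{k\to\omega} M_{n(k)})$ intertwines $\Theta_1|_Q$ and $\Theta_2|_Q$, i.e.\ $V\Theta_1(q)V^* = \Theta_2(q)$ for all $q \in Q$. Set $T := \sum_j \Theta_1(u_j) V \Theta_2(u_j)^*$; the intertwining relation gives $T = rV\Theta_2(p)$ and hence $\|T\|_2 = r\sqrt{\tau(p)}$. On the other hand, the bimodule operator $\xi \mapsto \sum_j \Theta_1(u_j)\xi\Theta_2(u_j)^*$ on the $L^2$ of the ultraproduct has norm bounded by $\bigl\|\sum_j u_j \otimes \overline{u_j}\bigr\|_{Qp \otimes_{\min} (Qp)^{op}} = rC$, yielding $\|T\|_2 \leq rC\sqrt{\tau(p)} < r\sqrt{\tau(p)}$, a contradiction. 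The hardest step is rigorously bounding the bimodule norm by the minimal tensor norm; the key observation is that each $M_{n(k)}$ is nuclear (so minimal and maximal tensor norms coincide at the finite level), and one can approximate each $u_j$ by a noncommutative $*$-polynomial $P_j(x)$ evaluated on $A^{(k)}, B^{(k)}$ and transfer the bound through the ultraproduct using Voiculescu's asymptotic freeness. This is the step that in the present paper's Proposition \ref{prop:almost unitary conjugacy} is handled via local reflexivity together with the strong convergence conjecture; in Jung's unconditional setting, working inside a single $M_{n(k)}$ (rather than in the tensor product $M_{n(k)} \otimes M_{n(k)}$) allows nuclearity alone to do the work.

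For (2), assume $Q$ is amenable, hence hyperfinite by Connes' theorem. Write $Q = \overline{\bigcup_n Q_n}^{SOT}$ as an increasing union of finite-dimensional $*$-subalgebras. For any two trace-preserving embeddings $\pi_1, \pi_2 : Q \to \prod_{k\to\omega} M_k$, the restrictions $\pi_i|_{Q_n}$ are unital embeddings of a finite-dimensional $C^*$-algebra into a $\textrm{II}_{1}$ factor and hence are unitarily equivalent: two unital $*$-embeddings of $\bigoplus_i M_{m_i}(\C)$ into a $\textrm{II}_{1}$ factor are conjugate if and only if they match traces on minimal central projections, which they do since both are trace-preserving. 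A standard diagonal/compactness argument, using $SOT$-compactness of the unit ball of the unitary group of the ultraproduct, then glues these finite-stage conjugations into a single global unitary intertwining $\pi_1$ with $\pi_2$.
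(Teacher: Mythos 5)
The nonamenable direction of your argument has a genuine gap, and in fact your construction cannot work as stated. If $B^{(k)}=U^{(k)}A^{(k)}(U^{(k)})^{*}$, then $\Theta_{2}=\operatorname{Ad}(V)\circ\Theta_{1}$ with $V=(U^{(k)})_{k\to\omega}$, so $\Theta_{1}$ and $\Theta_{2}$ are unitarily conjugate on \emph{all} of $M$: asymptotic freeness of the two images is perfectly compatible with unitary conjugacy (indeed the amenable half of the theorem says exactly that two free copies of a diffuse amenable algebra in the ultraproduct are conjugate). Since an intertwiner exists by construction, your contradiction argument must contain a false step, and it is the norm estimate: the claim that $\xi\mapsto\sum_{j}\Theta_{1}(u_{j})\xi\Theta_{2}(u_{j})^{*}$ has norm at most $\bigl\|\sum_{j}u_{j}\otimes\overline{u_{j}}\bigr\|_{\min}$ on $L^{2}$ of the ultraproduct. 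Nuclearity of $M_{n(k)}(\C)$ only identifies the finite-level bimodule norm with $\bigl\|\sum_{j}P_{j}(A^{(k)})\otimes\overline{P_{j}(B^{(k)})}\bigr\|_{M_{n(k)}(\C)\otimes M_{n(k)}(\C)}$; one must still show that these norms are asymptotically dominated by $\bigl\|\sum_{j}P_{j}(x)\otimes\overline{P_{j}(x)}\bigr\|_{\min}$, and that is precisely strong convergence in the tensor-product setting --- hypothesis (\ref{item:tensor Haus}) of Theorem \ref{T:main}, i.e.\ Conjecture \ref{conj:tensor Haus} --- which is the open problem this paper is organized around. Voiculescu's asymptotic freeness gives only weak$^{*}$ convergence of laws, hence only the lower semicontinuity of operator norms; the needed upper bound is the whole difficulty, and is why Proposition \ref{prop:almost unitary conjugacy} assumes strong convergence and additionally invokes local reflexivity. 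Your own computation shows the bound must fail for your particular pair: granting it, $\|T\|_{2}=r\sqrt{\tau(p)}\leq rC\sqrt{\tau(p)}$ with $C<1$ would rule out the intertwiner $V$ that manifestly exists.

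Note also that the paper does not reprove this theorem: it cites \cite{JungConj} and merely records afterwards that Jung's argument, run with microstates in the presence, handles general nonamenable $Q\leq M$ after passing to a finitely generated nonamenable subalgebra. Jung's actual proof of the hard direction is of a completely different nature from what you propose: it is a covering-number (``tubularity'') argument showing that nonamenability forces the microstates spaces of a finite generating set to be non-coverable by $\varepsilon$-neighborhoods of subexponentially many unitary orbits in the pseudometric $d^{\orb}$; a diagonal extraction then yields two microstate sequences that stay a definite orbital distance apart, hence two non-conjugate embeddings, with no tensor-norm transfer into the ultraproduct required. Your amenable direction is the standard argument and is correct in outline, except that the unitary group of the ultraproduct is not SOT-compact; the finite-stage conjugations should instead be glued by lifting each approximate intertwiner to a sequence of matrix unitaries and re-indexing along $\omega$ (the usual diagonal argument in a tracial ultraproduct).
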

Strictly speaking, Jung only proved the case $Q=M$ and when $M$ is finitely generated of Theorem \ref{T:JungThm}. However, by analyzing his proof and replacing microstates spaces with microstates spaces in the presence, it is not hard to prove the case $Q$ is nonamenable and finitely generated of Theorem \ref{T:JungThm}. Since any nonamenable von Neumann algebra has a finitely generated nonamenable von Neumann subalgebra, this is sufficient to handle the general case of Theorem \ref{T:JungThm}.

Under the hypotheses of Theorem \ref{thm:as conjugation}, if for every nonamenable $Q\leq M,$  almost every $(A,B)\in \Omega_{0}$ and every free ultrafilter $\omega$ on the natural numbers, we had that $\Theta_{A,\omega}\big|_{Q}$ and $\Theta_{B,\omega}\big|_{Q}$ are not unitarily conjugate, then it would follow that any $N\leq M$ with $h(N:M)\leq 0$ must be amenable. In particular, any amenable subalgebra of $M$ must have a maximal amenable extension. We can think of the statement that almost surely $\Theta_{A,\omega}\big|_{Q}$ is not unitarily equivalent to $\Theta_{B,\omega}\big|_{Q}$ as a ``randomized Jung theorem". It would mean that not only can we find a pair of homomorphisms satisfying the conclusion of Jung's theorem, but that a \emph{randomly chosen pair} satisfies Jung's theorem. This motivates the following conjecture.
\begin{conj}\label{conj:random Jung}
Fix an integer $r\geq 2,$ and let $\mu^{(k)}\in \Prob(M_{k}(\C)_{s.a.}^{r})$ be the $r$-fold product of the GUE distribution. Set $\mu=\prod_{k}\mu^{(k)},$ and choose a $\mu$-conull $\Omega_{0}\subseteq \prod_{k}M_{k}(\C)_{s.a.}^{r}$ as in Theorem \ref{thm:as conjugation}.  Then for every $Q\leq L(\F_{r})$ nonamenable and for every free ultrafilter $\omega$ on $\N,$ there is a $\mu\otimes\mu$-conull subset $\Omega\subseteq\Omega_{0}\times \Omega_{0}$ so that $\Theta_{A,\omega}\big|_{Q}$,$\Theta_{B,\omega}\big|_{Q}$ are not unitarily conjugate for all $(A,B)\in \Omega.$
\end{conj}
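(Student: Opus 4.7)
The plan is to deduce Conjecture \ref{conj:random Jung} from Conjecture \ref{conj:tensor Haus} by combining Proposition \ref{prop:almost unitary conjugacy} with standard concentration of measure, followed by a routine translation into the ultraproduct language of Theorem \ref{thm:as conjugation}.

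First I would upgrade the in-probability strong convergence hypothesised in Conjecture \ref{conj:tensor Haus} to almost sure strong convergence. For any fixed $P\in \C^{*}\ip{T_{1},\dots,T_{2r}},$ the map $(X,Y)\mapsto \|P(X\otimes 1,1\otimes Y)\|_{\infty}$ is Lipschitz in Hilbert--Schmidt norm, with Lipschitz constant independent of $k,$ once one restricts to the region in which all coordinates of $X$ and $Y$ have operator norm bounded by a fixed $C;$ this region has complement of probability $O(e^{-ck})$ by the standard Haagerup--Thorbj\o rnsen tail bounds on GUE operator norms. Gaussian concentration for GUE then yields a tail estimate of the form
\[\mu^{(k)}\otimes\mu^{(k)}\big(\big|\|P(X^{(k)}\otimes 1,1\otimes Y^{(k)})\|_{\infty}-m_{k}(P)\big|>\varepsilon\big)\leq C_{1}\exp(-c_{1}k\varepsilon^{2})\]
for some sequence of medians $m_{k}(P),$ and combining this summable tail with the hypothesised convergence in probability lets Borel--Cantelli (run over a countable dense subset of $\C^{*}\ip{T_{1},\dots,T_{2r}}$) produce a $\mu\otimes\mu$-conull set $\widetilde{\Omega}\subseteq \Omega_{0}\times \Omega_{0}$ on which every $(A,B)=((A^{(k)})_{k},(B^{(k)})_{k})$ has $(A^{(k)}\otimes 1_{M_{k}(\C)},1_{M_{k}(\C)}\otimes B^{(k)})$ converging strongly to $(s\otimes 1_{C^{*}(s)},1_{C^{*}(s)}\otimes s).$

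Next, given a nonamenable $P\leq L(\F_{r})=W^{*}(s),$ apply Proposition \ref{prop:almost unitary conjugacy} with $M=L(\F_{r}),$ $I=\{1,\dots,r\},$ $x=s,$ and $Q=P;$ the hypothesis is satisfied because $C^{*}(s)$ is exact, hence locally reflexive by the remarks after Definition \ref{D:locally reflexive}. The proposition produces $r'\in \N,$ a tuple $F=(F_{1},\dots,F_{r'})\in (\mathcal{F}_{R,r,\infty})^{r'}$ with $F(s)\in P^{r'},$ and an $\varepsilon>0$ such that whenever the law of $(A^{(k)}\otimes 1,1\otimes (B^{(k)})^{t})$ converges strongly to the law of $(s\otimes 1_{C^{*}(s)^{op}},1_{C^{*}(s)}\otimes s^{op}),$ we have $\liminf_{k}d^{\orb}(F(A^{(k)}),F(B^{(k)}))\geq \varepsilon.$ Exactly as in the deduction of Theorem \ref{T:main} in Section \ref{subsec: proof of main theorem}, transpose invariance of the GUE distribution together with the fact that $s^{op}$ is itself a free semicircular family of the same marginal law let us identify the strong convergence obtained in the previous paragraph with the hypothesis of the proposition, so this $\varepsilon$ lower bound holds for every $(A,B)\in \widetilde{\Omega}.$

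Finally, fix any free ultrafilter $\omega$ on $\N$ and suppose for contradiction that some $(A,B)\in \widetilde{\Omega}$ has $\Theta_{A,\omega}\big|_{P}$ unitarily conjugate to $\Theta_{B,\omega}\big|_{P}$ via some $u\in \mathcal{U}(\prod_{k\to\omega}M_{k}(\C)).$ Since unitaries in tracial ultraproducts lift to sequences of unitaries, write $u=(U^{(k)})_{k\to\omega}$ with $U^{(k)}\in \mathcal{U}(k).$ For each $j=1,\dots,r'$ the identity $u\Theta_{A,\omega}(F_{j}(s))u^{*}=\Theta_{B,\omega}(F_{j}(s))$ together with Lemma \ref{lem:nc func calc and the ultraproduct} yields $\lim_{k\to\omega}\|U^{(k)}F_{j}(A^{(k)})(U^{(k)})^{*}-F_{j}(B^{(k)})\|_{2}=0,$ whence $\lim_{k\to\omega}d^{\orb}(F(A^{(k)}),F(B^{(k)}))=0,$ contradicting the $\liminf \geq \varepsilon$ bound along $\omega.$ Thus on $\widetilde{\Omega}$ the restrictions $\Theta_{A,\omega}\big|_{P}$ and $\Theta_{B,\omega}\big|_{P}$ are never unitarily conjugate, and in fact a single conull $\widetilde{\Omega}$ (depending only on the hypothesised strong convergence) works uniformly for every nonamenable $P.$ The main obstacle is manifestly Conjecture \ref{conj:tensor Haus} itself, which is the central open problem motivating the entire paper; granted that input, every other step of the reduction is essentially formal, the only minor care being to run the concentration-based Borel--Cantelli argument uniformly over a countable dense family of test polynomials.
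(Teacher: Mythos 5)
The statement you are addressing is an open conjecture --- by Proposition \ref{prop:conejctures upon conjecture} its truth would imply the Peterson--Thom conjecture --- so no unconditional proof is possible, and what you have written is (correctly, and explicitly) a conditional derivation from Conjecture \ref{conj:tensor Haus}. Your derivation is sound, but it takes a different route from the paper's. The paper factors the implication through Conjecture \ref{conj:pq conjecture}: it first notes $\|x\#\|_{\infty,1}\leq\|x\#\|_{2,2}$ and then deduces the random Jung statement from the weaker $(\infty,1)$-norm bound by running Haagerup's nonamenability criterion directly in the ultraproduct --- powering up $\frac{1}{r}\sum_{j}u_{j}\otimes\overline{u_{j}}$ to beat the multiplicative constant $C$, approximating the $u_{j}$ via local reflexivity, lifting the conjugating unitary, and estimating $\|vf\|_{1}$. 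You instead go straight from full strong convergence: a Gaussian concentration plus Borel--Cantelli upgrade from in-probability to almost-sure strong convergence (over a countable dense family of test polynomials, which suffices by the uniform operator-norm bounds), followed by Proposition \ref{prop:almost unitary conjugacy} to get $\liminf_{k}d^{\orb}(F(A^{(k)}),F(B^{(k)}))\geq\varepsilon$ on a single conull set, which contradicts unitary conjugacy of $\Theta_{A,\omega}\big|_{P}$ and $\Theta_{B,\omega}\big|_{P}$ via Lemma \ref{lem:nc func calc and the ultraproduct}. Your route is shorter, reuses the paper's main technical proposition, and produces one conull set valid for all nonamenable $P$ and all $\omega$ simultaneously; what it gives up is precisely the point of the paper's detour, namely that the random Jung conjecture already follows from the strictly weaker $(\infty,1)$-norm input. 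Two bits of bookkeeping you should make explicit: define the conull set directly as the set where $(A^{(k)}\otimes 1_{M_{k}(\C)},1_{M_{k}(\C)}\otimes (B^{(k)})^{t})$ converges strongly to $(s\otimes 1_{C^{*}(s)^{op}},1_{C^{*}(s)}\otimes s^{op})$ (conull because the coordinatewise transpose preserves the GUE law and $s^{op}$ has the law of $s$), so that Proposition \ref{prop:almost unitary conjugacy} applies verbatim; and observe that the operator-norm constraints $\|A_{i}^{(k)}\|_{\infty}\leq R_{i}$ demanded by that proposition hold only for a tail of the sequence, which is harmless since the conclusion is a $\liminf$.
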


Related to Jung's theorem, we can use strong convergence and local reflexivity to give criteria so that a concrete pair of embeddings into ultraproducts of matrices are not unitarily conjugate when restricted to any nonamenable subalgebra.

\begin{prop}\label{prop:not conj up}
Let $(M,\tau)$ be a tracial von Neumann algebra, $I$ an index set and $x\in M^{I}$ with $W^{*}(x)=M.$  Suppose we are given positive integers $n(k)\to\infty$ and $(A^{(k)},B^{(k)})\in M_{n(k)}(\C)^{I}$ so that
the law of $(A^{(k)}\otimes 1_{M_{n(k)}(\C)},1_{M_{n(k)}(\C)}\otimes (B^{(k)})^{t})$ converges strongly to the law of $(x\otimes 1_{C^{*}(x)^{op}},1_{C^{*}(x)}\otimes x^{op}).$ For a free ultrafilter $\omega,$ let $\Theta_{A,\omega}\colon M\to \prod_{k\to\omega}M_{n(k)}(\C),$ $\Theta_{B,\omega}\colon M\to \prod_{k\to\omega}M_{n(k)}(\C)$ be the unique trace-preserving, normal $*$-homomorphisms which satisfy
\[\Theta_{A,\omega}(x)=(A^{(k)})_{k\to\omega},\,\,\, \Theta_{B,\omega}(x)=(B^{(k)})_{k\to\omega}.\]
If $C^{*}(x)$ is locally reflexive,
then for any nonamenable $Q\leq M$ we have that $\Theta_{A,\omega}\big|_{Q}$ and $\Theta_{B,\omega}\big|_{Q}$ are not unitarily conjugate.
\end{prop}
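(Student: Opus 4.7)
The plan is to derive this result as a more or less direct translation of Proposition \ref{prop:almost unitary conjugacy} into the ultraproduct language, arguing by contradiction. Fix a nonamenable $Q\leq M$ and suppose, toward a contradiction, that there is a unitary $u\in \mathcal{U}(\prod_{k\to\omega}M_{n(k)}(\C))$ with $u\Theta_{A,\omega}(q)u^{*}=\Theta_{B,\omega}(q)$ for all $q\in Q$. Lift $u$ to a norm-bounded sequence $(U^{(k)})_{k}$ which we may, after a standard polar decomposition/nearest-unitary argument in $M_{n(k)}(\C)$, take to be genuinely unitary in $M_{n(k)}(\C)$ for each $k$.

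Apply Proposition \ref{prop:almost unitary conjugacy} to obtain $r\in\N$, a tuple $F=(F_{1},\ldots,F_{r})\in (\mathcal{F}_{R,I})^{r}$ with $F(x)\in Q^{r}$, and an $\varepsilon>0$ so that for \emph{any} pair of sequences $(\tA^{(k)},\tB^{(k)})$ with entries norm-bounded by $R$ and with the law of $(\tA^{(k)}\otimes 1,1\otimes (\tB^{(k)})^{t})$ converging strongly to the law of $(x\otimes 1_{C^{*}(x)^{op}},1_{C^{*}(x)}\otimes x^{op})$, one has
\[\liminf_{k\to\infty}d^{\orb{}}(F(\tA^{(k)}),F(\tB^{(k)}))\geq \varepsilon.\]
Applied to our given sequences $(A^{(k)},B^{(k)})$, this forces $\liminf_{k\to\infty}d^{\orb{}}(F(A^{(k)}),F(B^{(k)}))\geq \varepsilon$, and since $\omega$ is a free ultrafilter, also $\lim_{k\to\omega}d^{\orb{}}(F(A^{(k)}),F(B^{(k)}))\geq \varepsilon$.

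On the other hand, Lemma \ref{lem:nc func calc and the ultraproduct} gives $\Theta_{A,\omega}(F_{j}(x))=F_{j}((A^{(k)})_{k\to\omega})=(F_{j}(A^{(k)}))_{k\to\omega}$ for each $j$, and similarly for $B$. Since $F_{j}(x)\in Q$, the assumed conjugacy yields $u(F_{j}(A^{(k)}))_{k\to\omega}u^{*}=(F_{j}(B^{(k)}))_{k\to\omega}$ in the ultraproduct, i.e.\
\[\lim_{k\to\omega}\|U^{(k)}F_{j}(A^{(k)})(U^{(k)})^{*}-F_{j}(B^{(k)})\|_{2}=0\]
for each $j=1,\ldots,r$. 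Summing over $j$ and using that $d^{\orb{}}$ is defined as an infimum over unitaries, we conclude $\lim_{k\to\omega}d^{\orb{}}(F(A^{(k)}),F(B^{(k)}))=0$, contradicting the lower bound $\varepsilon$ of the previous paragraph.

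The only mildly technical step is ensuring the lift $(U^{(k)})_{k}$ can be taken to consist of honest unitaries in $M_{n(k)}(\C)$, which is standard (any representative of a unitary in a tracial ultraproduct of finite factors can be replaced by its nearest unitary in the $\|\cdot\|_{2}$-norm while representing the same class in the ultraproduct); no new idea is required. The content of the argument is entirely supplied by Proposition \ref{prop:almost unitary conjugacy} together with the naturality of Jekel's noncommutative functional calculus under ultraproducts (Lemma \ref{lem:nc func calc and the ultraproduct}), which together convert the strong-convergence hypothesis into the asserted rigidity of embeddings.
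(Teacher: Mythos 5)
Your proof is correct and follows exactly the route the paper intends: the paper's own proof consists of observing that strong convergence forces $\sup_{k}\max(\|A^{(k)}_{i}\|_{\infty},\|B^{(k)}_{i}\|_{\infty})<\infty$ for each $i$ and then declaring the rest ``an exercise'' from Proposition \ref{prop:almost unitary conjugacy}, and your argument is precisely that exercise carried out (lift the conjugating unitary, push the $F_{j}$ through the ultraproduct via Lemma \ref{lem:nc func calc and the ultraproduct} and naturality, and contradict the $\liminf$ lower bound). The only detail worth adding is the paper's explicit remark that the uniform operator-norm bounds needed to invoke Proposition \ref{prop:almost unitary conjugacy} are themselves a consequence of the strong-convergence hypothesis.
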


\begin{proof}
Note that strong convergence implies that for all $i\in I,$
\[R_{i}=\sup_{k}\max(\|A^{(k)}_{i}\|_{\infty},\|B^{(k)}_{i}\|_{\infty})<\infty.\]
From here, it is an exercise to derive this from Proposition \ref{prop:almost unitary conjugacy}.

\end{proof}

Recall that if $x\in M_{n}(\C)\otimes M_{n}(\C)$ then we have an operator $x\#\colon M_{n}(\C)\to M_{n}(\C)$ defined on elementary tensors by
\[(A\otimes B)\#C=ACB^{t}.\]
Moreover, $x\mapsto x\#$ is an injective $*$-homomorphism $M_{n}(\C)\otimes M_{n}(\C)\to B(S^{2}(n,\tr))$ and as such it is isometric. So
\[\|x\|_{M_{n}(\C)\otimes M_{n}(\C)}=\|x\#\|_{B(S^{2}(n,\tr))}\]
and this is precisely what we used in our reduction to strong convergence. However, it is natural to view $x\#$ as an operator between other noncommutative $L^{p}$-spaces. Recall that if $1\leq p<\infty,$ then we have a norm $\|\cdot\|_{p}$ on $M_{n}(\C)$ by
\[\|A\|_{p}=\tr(|A|^{p})^{1/p},\mbox{ with $|A|=(A^{*}A)^{1/2}$.}\]
As usual, we let $\|A\|_{\infty}$ be the operator norm of $A\in M_{n}(\C).$
We let $S^{p}(n,\tr)$ be $M_{n}(\C)$ equipped with the norm $\|\cdot\|_{p}$, and for $x\in M_{n}(\C)\otimes M_{n}(\C)$ and $1\leq p,q\leq \infty,$ we let $\|x\#\|_{p,q}$ be the norm of the operator $A\mapsto x\#A$ as an operator $S^{p}(n,\tr)\to S^{q}(n,\tr).$ So our discussion above shows that
\[\|x\#\|_{2,2}=\|x\|_{M_{n}(\C)\otimes M_{n}(\C)}.\]
Because we are using the normalized trace, we have that $\|A\|_{p}\leq \|A\|_{q}$ for $1\leq p\leq q\leq \infty$ and $A\in M_{n}(\C).$ So
\[\|x\#\|_{p_{1},q_{1}}\leq \|x\#\|_{p_{2},q_{2}}\]
if $p_{2}\leq p_{1}$,$q_{1}\leq q_{2}$. We now state a conjecture weaker than our strong convergence conjecture in terms of operator norms $M_{n}(\C)\to S^{1}(n,\tr).$
\begin{conj}\label{conj:pq conjecture}
Fix an integer $r\geq 2.$ Then there is a constant $C>0$ with the following property. Let $X_{1}^{(k)},X_{2}^{(k)},\cdots,X_{r}^{(k)}$,$Y_{1}^{(k)},Y_{2}^{(k)},\cdots,Y_{r}^{(k)}$ be random, self-adjoint $k\times k$ matrices which are independent and are each GUE distributed. Let $s=(s_{1},\cdots,s_{r})$ be a free semicircular family each with mean zero and variance $1.$ Then for any $P\in \C\ip{T_{1},\cdots,T_{2r}}$
we have that
\[\limsup_{k\to\infty}\|P(X^{(k)}\otimes 1_{M_{k}(\C)},1_{M_{k}(\C)}\otimes Y^{(k)})\#\|_{\infty,1}\leq C\|P(s\otimes 1_{C^{*}(s)},1_{C^{*}(s)}\otimes s)\|_{\infty},\]
where the norm on the right-hand side is taken in $C^{*}(s)\otimes_{\min{}}C^{*}(s).$
\end{conj}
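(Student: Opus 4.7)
The plan is to reduce Conjecture \ref{conj:pq conjecture} to a moment-method estimate combined with concentration of measure, exploiting the fact that the $\infty\to 1$ norm is a ``rank $k^{2}$'' quantity rather than a ``rank $k^{4}$'' quantity as in the full $\min{}$-norm. The starting point is the elementary chain of inequalities
\[\|x\#\|_{\infty,1}\leq \|x\#\|_{2,2}=\|x\|_{M_{k}(\C)\otimes_{\min{}}M_{k}(\C)},\]
which shows that the conjecture is a priori weaker than Conjecture \ref{conj:tensor Haus} and would follow from it with $C=1$. The interest of Conjecture \ref{conj:pq conjecture} therefore lies in whether it can be established directly, without the full strong convergence of the tensor product.

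The key reformulation, via the spectral theorem and the decomposition of a general contraction into four self-adjoint contractions, is that
\[\|x\#\|_{\infty,1} \asymp \sup_{U,V\in\mathcal{U}(k)}\bigl|\tr(V\cdot (x\# U))\bigr|,\]
up to an absolute constant, where the supremum now runs over the compact group $\mathcal{U}(k)$ of real dimension $k^{2}$, rather than over the unit ball of $M_{k}(\C)\otimes M_{k}(\C)$ of complex dimension $k^{4}$. The proof would then proceed in three steps. First, for fixed $U,V\in\mathcal{U}(k)$, the random variable $\tr(V\cdot (P(X^{(k)}\otimes 1,1\otimes Y^{(k)})\# U))$ is a Lipschitz function of the GUE entries (with Lipschitz constant polynomial in the degree of $P$), so exponential concentration at scale $k^{2}$ concentrates it sharply around its mean. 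Second, by Voiculescu's asymptotic freeness theorem, this mean converges to a trace in free probability of the form $(\tau\otimes\tau)\bigl((v\otimes u)\cdot P(s\otimes 1,1\otimes s)\bigr)$ for a matched pair $(u,v)$ of unitaries extracted from $(U,V)$ in an ultraproduct. Third, a standard volumetric covering gives an $\varepsilon$-net of $\mathcal{U}(k)^{2}$ of cardinality $(C'/\varepsilon)^{O(k^{2})}$, so a union bound against the concentration inequality at scale $k^{2}$ upgrades the pointwise bound to the full supremum with high probability.

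The main obstacle I anticipate is making the identification in the second step robust enough to yield the quantitative bound $C\|P(s\otimes 1,1\otimes s)\|_{C^{*}(s)\otimes_{\min{}}C^{*}(s)}$. The supremum of $|(\tau\otimes\tau)((v\otimes u)\cdot P(s\otimes 1,1\otimes s))|$ over unitaries $u,v\in C^{*}(s)$ is only obviously comparable to the $\min{}$-norm up to Pisier's non-commutative Grothendieck constant, and moreover the unitaries $(u,v)$ arising from a generic ultraproduct limit of $(U,V)$ need not approach the witnesses of the $\min{}$-norm. Overcoming this will require either a factorization of $x\#$ through $S^{2}(k,\tr)$ combined with Pisier's non-commutative Grothendieck inequality (at the cost of enlarging the constant $C$), or leveraging the tensor-product strong convergence results of Collins--Guionnet--Parraud \cite{CGPStrongTen} as an approximate substitute in the moderately-growing-dimension regime. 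The balancing of the concentration scale $k^{2}$ against the covering-number exponent $O(k^{2})$ is delicate but feasible, and I expect the bulk of the technical work to lie in closing this Grothendieck-type gap uniformly in $k$.
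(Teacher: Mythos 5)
The statement you are addressing is Conjecture \ref{conj:pq conjecture}, which the paper does not prove: it is one of the open intermediate conjectures of Section \ref{sec: Jung}. The only thing the paper establishes about it is the implication recorded in Proposition \ref{prop:conejctures upon conjecture}, namely that Conjecture \ref{conj:tensor Haus} implies Conjecture \ref{conj:pq conjecture} with $C=1$, via the one-line estimate $\|x\#\|_{\infty,1}\leq \|x\#\|_{2,2}=\|x\|_{M_{k}(\C)\otimes M_{k}(\C)}$. Your opening paragraph reproduces exactly this reduction, and to that extent you match the paper. Everything after that is a proposed direct attack on an open problem, and it does not close; since you present it as a path to a proof, let me point out where it breaks, beyond the obstacle you already flag.

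First, the net-plus-union-bound step does not yield a multiplicative bound. For fixed $(U,V)$ the functional $(X,Y)\mapsto \tr(V(P(X^{(k)}\otimes 1,1_{M_{k}(\C)}\otimes Y^{(k)})\#U))$ is Lipschitz (on the event $\|X_j\|_{\infty},\|Y_j\|_{\infty}\leq R$) with constant $L_{P}$ depending on the degree and coefficients of $P$, so the deviation probability at level $\delta$ is $\exp(-c(\delta/L_{P})^{2}k^{2})$, while an $\varepsilon$-net of $\mathcal{U}(k)^{2}$ has cardinality $\exp(C_{0}(\varepsilon)k^{2})$. Both exponents are of order $k^{2}$, so the union bound only closes when $\delta\gtrsim L_{P}$: the method produces $\limsup_{k}\|\cdot\|_{\infty,1}\leq \sup_{U,V}\lim_{k}\mathbb{E}[\cdots]+O(L_{P})$, and the additive error $O(L_{P})$ is not controlled by $\|P(s\otimes 1_{C^{*}(s)},1_{C^{*}(s)}\otimes s)\|_{\infty}$ (a polynomial can have huge coefficients and nearly vanishing evaluation at $(s\otimes 1,1\otimes s)$). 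This is precisely the reason naive epsilon-net arguments fail for operator norms of polynomials in random matrices and why Haagerup--Thorbj\o rnsen needed the linearization trick. Second, and more fundamentally, your step identifying $\sup_{U,V}\lim_{k}\mathbb{E}[\cdots]$ with a quantity bounded by the min-norm is the entire difficulty, not a loose end. Writing $P(X\otimes 1,1\otimes Y)=\sum_{i}A_{i}(X)\otimes B_{i}(Y)$, the limit along an ultrafilter where $(U^{(k)},V^{(k)})$ has a limit law is $\sum_{i}\tau(v\,a_{i}(s)\,u\,b_{i}(s'))$ with $s,s'$ free semicircular families and $(u,v)$ unitaries free from them; bounding this \emph{uniformly over all such} $(u,v)$ by $\|\sum_{i}a_{i}(s)\otimes \overline{b_{i}(s)^{*}}\|_{\min}$ amounts to the min-continuity of a coefficient functional on a free product, which is in substance the same open problem as the strong convergence statement itself (and is not supplied by the noncommutative Grothendieck inequality, which goes in the wrong direction here). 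A minor remark: your reduction of $\|x\#\|_{\infty,1}$ to a supremum over unitary pairs is an exact equality by duality and the Russo--Dye theorem, so no absolute constant is lost there; but that does not repair the two gaps above.
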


\begin{prop}\label{prop:conejctures upon conjecture}
We have the following implications between the above conjectures and the Peterson-Thom conjecture.
Conjecture \ref{conj:tensor Haus} implies Conjecture \ref{conj:pq conjecture}, Conjecture \ref{conj:pq conjecture} implies Conjecture \ref{conj:random Jung}, Conjecture \ref{conj:random Jung} implies Conjecture \ref{C:1bounded ent conjecture}, and Conjecture \ref{C:1bounded ent conjecture} implies the Peterson-Thom conjecture.
\end{prop}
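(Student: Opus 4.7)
The four implications can be proved essentially independently, and two of them are immediate. For Conjecture \ref{conj:tensor Haus} $\Rightarrow$ Conjecture \ref{conj:pq conjecture}, the $\#$-action embeds $M_k(\C)\otimes M_k(\C)$ isometrically into $B(S^2(k,\tr))$, so $\|x\#\|_{2,2}=\|x\|_{M_k(\C)\otimes M_k(\C)}$; combined with the normalized-trace inequalities $\|y\|_1\le\|y\|_2\le\|y\|_\infty$ for $y\in M_k(\C)$, one obtains $\|x\#\|_{\infty,1}\le\|x\|_{M_k(\C)\otimes M_k(\C)}$, so Conjecture \ref{conj:tensor Haus} immediately yields Conjecture \ref{conj:pq conjecture} with $C=1$. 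For Conjecture \ref{C:1bounded ent conjecture} $\Rightarrow$ Peterson-Thom, this is exactly Proposition \ref{P:CPE implies PT}: assuming $h(Q:L(\F_r))>0$ for every nonamenable $Q\le L(\F_r)$, every Pinsker algebra (having $h\le0$) is amenable, and Proposition \ref{P:CPE implies PT} finishes the argument.

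For Conjecture \ref{conj:random Jung} $\Rightarrow$ Conjecture \ref{C:1bounded ent conjecture}, I argue by contrapositive using Theorem \ref{thm:as conjugation}. Suppose some $Q\le L(\F_r)$ is nonamenable with $h(Q:L(\F_r))\le 0$. Applying Theorem \ref{thm:as conjugation}\,(\ref{item:almost sure conjugacy}) to the GUE setting produces a $\mu\otimes\mu$-conull subset of $\Omega_0\times\Omega_0$ on which $\Theta_{A,\omega}\resto Q$ and $\Theta_{B,\omega}\resto Q$ are unitarily conjugate for every free ultrafilter $\omega$. But Conjecture \ref{conj:random Jung}, applied with $P=Q$, provides a conull set on which they are \emph{not} unitarily conjugate; the intersection of two conull sets is nonempty, a contradiction.

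The main technical step is Conjecture \ref{conj:pq conjecture} $\Rightarrow$ Conjecture \ref{conj:random Jung}. My plan is to reproduce the scheme of Theorem \ref{T:more general main theorem intro}\,(\ref{item:Pinskers are amenable intro}) with a variant of Proposition \ref{prop:almost unitary conjugacy} in which strong convergence is replaced by the $\|\cdot\|_{\infty,1}$-bound of Conjecture \ref{conj:pq conjecture}. Fix a nonamenable $P\le L(\F_r)$ and use Haagerup's theorem to obtain $f\in Z(P)$ and $u_1,\ldots,u_r\in\mathcal{U}(Pf)$ with $\alpha:=r^{-1}\|\sum_j u_j\otimes\overline{u_j}\|<1$. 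The new ingredient is an amplification internal to $Pf$: for every $N\ge1$, the products $v_{\vec{i}}=u_{i_1}u_{i_2}\cdots u_{i_N}$ for $\vec{i}\in\{1,\ldots,r\}^N$ constitute $r^N$ unitaries in $\mathcal{U}(Pf)$, and since the product in the $C^*$-algebra $(Pf)\otimes_{\min}(Pf)^{op}$ satisfies $(a\otimes\overline{a})(b\otimes\overline{b})=ab\otimes\overline{ab}$, one has $\sum_{\vec{i}}v_{\vec{i}}\otimes\overline{v_{\vec{i}}}=\bigl(\sum_j u_j\otimes\overline{u_j}\bigr)^N$, hence $\|\sum_{\vec{i}}v_{\vec{i}}\otimes\overline{v_{\vec{i}}}\|\le(\alpha r)^N$. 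Next, choose $F=(F_{\vec{i}})\in\mathcal{F}_{R,I}^{r^N}$ with $F_{\vec{i}}(x)=v_{\vec{i}}$, and set $x_k=\sum_{\vec{i}}F_{\vec{i}}(A^{(k)})\otimes\overline{F_{\vec{i}}(B^{(k)})}$. Then the computation in Proposition \ref{prop:almost unitary conjugacy}, using $\|F_{\vec{i}}(A^{(k)})\|_2^2\to\tau(v_{\vec{i}}^*v_{\vec{i}})=\tau(f)$, yields
\[d^{\orb}(F(A^{(k)}),F(B^{(k)}))^2\ge 2r^N\tau(f)-2|S_k|\quad\text{with}\quad|S_k|=\bigl|\Re\tr\bigl((U^{(k)})^*\,x_k\#U^{(k)}\bigr)\bigr|\le\|x_k\#\|_{\infty,1}.\]
Approximating each $F_{\vec{i}}$ in $\|\cdot\|_{R,\infty}$ by noncommutative polynomials and invoking Conjecture \ref{conj:pq conjecture} (with the transpose convention $1\otimes(B^{(k)})^t$, which is innocuous since the GUE and free-semicircular distributions are invariant under transpose/opposite) gives $\limsup_k|S_k|\le C(\alpha r)^N$, so
\[\liminf_{k\to\infty}d^{\orb}(F(A^{(k)}),F(B^{(k)}))^2\ge 2r^N\bigl(\tau(f)-C\alpha^N\bigr)>0\]
whenever $N$ is chosen so that $C\alpha^N<\tau(f)$. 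Plugging this strengthened Proposition into the proof of Theorem \ref{T:more general main theorem intro}\,(\ref{item:Pinskers are amenable intro}) and combining with Theorem \ref{thm:as conjugation} to upgrade to the almost-sure statement then yields Conjecture \ref{conj:random Jung}. The main obstacle is ensuring that the operator norm appearing in Haagerup's criterion for $(Pf)\otimes_{\min}(Pf)^{op}$ matches the $C^*$-norm bounded by Conjecture \ref{conj:pq conjecture}, and that the polynomial approximation of the $F_{\vec{i}}$ interacts cleanly with the amplification exponent $N$; both should follow from standard identifications, but the bookkeeping around the transpose convention and the $\min$-tensor norm requires care.
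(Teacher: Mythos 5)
Your four implications line up with the paper's proof: the first and last are verbatim the paper's one-line arguments, the third is the contrapositive reading of Theorem \ref{thm:as conjugation}\,(\ref{item:almost sure conjugacy}) that the paper also invokes, and for the main implication (Conjecture \ref{conj:pq conjecture} $\Rightarrow$ Conjecture \ref{conj:random Jung}) your amplification $\sum_{\vec{i}}v_{\vec{i}}\otimes\overline{v_{\vec{i}}}=\bigl(\sum_j u_j\otimes\overline{u_j}\bigr)^N$ is exactly the paper's replacement of the averaging operator by its $s$-th power to force $D'<\tau(f)/C$, and your H\"older bound $|S_k|\le\|x_k\#\|_{\infty,1}$ plays the same role as the paper's estimate of $\|vf\|_1$. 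The only real difference is packaging: you prove a finite-level $\liminf_k d^{\orb}>0$ statement and then pass to the ultraproduct, while the paper runs the contradiction directly inside $\prod_{k\to\omega}M_{n(k)}(\C)$ using the $\|\cdot\|_1$-norm of $vf$; the two are interchangeable. One step of yours is misstated, though: you cannot "approximate each $F_{\vec{i}}$ in $\|\cdot\|_{R,\infty}$ by noncommutative polynomials" — if that were possible one would have $W^*(x)=C^*(x)$, and the whole point of local reflexivity in Proposition \ref{prop:almost unitary conjugacy} is to get around this. The correct route (which is what the paper does, and what you presumably intend by "reproducing the scheme" of that proposition) is two-step: local reflexivity supplies contractive c.p.\ maps $\phi_m$ with $\phi_m(v_{\vec{i}})\to v_{\vec{i}}$ only in $\|\cdot\|_2$, and the operator-norm control $\|\sum_{\vec{i}}\phi_m(v_{\vec{i}})\otimes\overline{\phi_m(v_{\vec{i}})}\|_\infty\le(\alpha r)^N$ comes from $\|\phi_m\otimes\phi_m^{op}\|_{cb}\le 1$, after which the $\phi_m(v_{\vec{i}})\in C^*(x)$ are operator-norm approximated by polynomials. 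With that correction your argument is the paper's.
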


\begin{proof}
Conjecture \ref{conj:tensor Haus} implies Conjecture \ref{conj:pq conjecture}: Take $C=1,$ and use that $\|x\#\|_{\infty,1}\leq \|x\#\|_{2,2}=\|x\|_{M_{n}(\C)\otimes M_{n}(\C)}$ for all $n\in \N,$ and all $x\in M_{n}(\C)\otimes M_{n}(\C).$

Conjecture \ref{conj:pq conjecture} implies Conjecture \ref{conj:random Jung}:
It is well known (see \cite[Proof of Lemma 3.3]{HaagThorbNormBound}) that we may find an $R>0$ so that
\[\limsup_{k\to\infty}\frac{1}{k}\log \mu^{(k)}\left(\left(\prod_{j=1}^{r}\{A\in M_{k}(\C):\|A\|_{\infty}\leq R\}\right)^{c}\right)<0.\]
Suppose $Q\leq L(\F_{r})$ is nonamenable, and apply \cite[Lemma 2.2]{HaagerupAmenable} to find a nonzero projection $f\in Z(Q)$ and $u_{1},\cdots,u_{r}\in \mathcal{U}(Qf)$ so
\[D'=\frac{1}{r}\left\|\sum_{j=1}^{r}u_{j}\otimes \overline{u_{j}}\right\|_{\infty}<1.\]
By replacing $\left(\frac{1}{r}\sum_{j=1}^{r}u_{j}\otimes \overline{u_{j}}\right)$ with $\left(\frac{1}{r}\sum_{j=1}^{r}u_{j}\otimes \overline{u_{j}}\right)^{s}$ for a suitably large $s\in \N,$ we may, and will, assume that $D'<\frac{\tau(f)}{C}.$ Let $\Omega_{0}$ be as in Theorem \ref{thm:as conjugation} (\ref{item:as defined homom}). By Conjecture \ref{conj:pq conjecture}, we may choose a conull $\Xi\subseteq \Xi\times \Omega_{0}$ so that for all $A=(A^{(k)}),B=(B^{(k)})\in \Xi,$ we have
\[\limsup_{k\to\infty}\|P(A^{(k)}\otimes 1_{M_{k}(\C)},1_{M_{k}(\C)}\otimes B^{(k)})\#\|_{\infty,1}\leq C\|P(s\otimes 1_{C^{*}(s)},1_{C^{*}(s)}\otimes s)\|_{\infty}.\]
Suppose that the negation of Conjecture \ref{conj:random Jung} holds. Then there is a positive measure $\Upsilon\subseteq \Xi$ and a free ultrafilter $\omega$ on $\N$ so that for all $(A,B)\in \Upsilon$ we have that $\Theta_{A,\omega}\big|_{Q}$ and $\Theta_{B,\omega}\big|_{Q}$ are unitarily conjugate. Fix $(A,B)\in \Upsilon.$ Let $v\in \mathcal{U}\left(\prod_{k\to\omega}M_{k}(\C)\right)$ be such that
\[v\Theta_{B,\omega}(x)v^{*}=\Theta_{A,\omega}(x) \mbox{ for all $x\in Q$,}\]
and write $v=(V^{(k)})_{k\to\omega}$ with $V^{(k)}\in \mathcal{U}(k).$

Observe that for all $(X_{k})_{k\to\omega}\in \prod_{k\to\omega}M_{k}(\C)$ we have
\[\|(X_{k})_{k\to\omega}\|_{1}=\lim_{k\to\omega}\|X_{k}\|_{1}.\]
Indeed, this follows from the fact that $|(X_{k})_{k\to\omega}|=(|X_{k}|)_{k\to\omega},$
which is in turn a consequence of the fact that continuous functional calculus commutes with the operation of passing to the ultraproduct. Since $C^{*}(s)$ is exact, and thus locally reflexive, as in the proof of Theorem \ref{T:more general main theorem intro} (\ref{item:Pinskers are amenable intro})) we may choose a $D\in (D',\frac{\tau(f)}{C})$ and a sequence $P_{j,m}\in \C\ip{(T_{j})_{j\in J}}$ so that
\begin{itemize}
    \item $\|P_{j,m}\|_{R,\infty}\leq 1,$
    \item $\|P_{j,m}(s)-u_{j}\|_{2}\to_{m\to\infty}0,$
    \item $\frac{1}{r}\left\|\sum_{j=1}^{r}P_{j,m}(s)\otimes \overline{P_{j,m}(s)}\right\|_{\infty}\leq D$ for all $m.$
\end{itemize}
Note that as a consequence of the second item
\[\|P_{j,m}(s)-u_{j}\|_{1}\leq \|P_{j,m}(s)-u_{j}\|_{2}\to_{m\to\infty}0.\]
Then for every $m\in \N,$
\begin{align*}
 \tau(f)=\|vf\|_{1}=\frac{1}{r}\left\|\sum_{j=1}^{r}\Theta_{A,\omega}(u_{j})v\Theta_{B,\omega}(f)\Theta_{B,\omega}(u_{j})^{*}\right\|_{1}&\leq \frac{2}{r}\sum_{j=1}^{r}\|P_{j,m}(s)-u_{j}\|_{1}\\
 &+\frac{1}{r}\left\|\sum_{j=1}^{r}P_{j,m}((A^{(k)})_{k\to\omega})v\Theta_{B,\omega}(f)P_{j,m}((B^{(k)})_{k\to\omega})^{*}\right\|_{1},
\end{align*}
where in the last step we use that $\|P_{j,m}(s)\|_{\infty}\leq 1$ and the fact that $\Theta_{A,\omega}$,$\Theta_{B,\omega}$ are $\|\cdot\|_{1}-\|\cdot\|_{1}$,$\|\cdot\|_{\infty}-\|\cdot\|_{\infty}$ isometries. Write $\Theta_{B,\omega}(f)=(F^{(k)})_{k\to\omega}$ where $F^{(k)}$ are projections in $M_{n(k)}(\C)$. We can estimate the second term above as follows:
\begin{align*}
  \frac{1}{r}\left\|\sum_{j=1}^{r}P_{j,m}((A^{(k)})_{k\to\omega})v\Theta_{B,\omega}(f)P_{j,m}((B^{(k)})_{k\to\omega})^{*}\right\|_{1}&=\lim_{k\to\omega}\frac{1}{r}\left\|\sum_{j=1}^{r}P_{j,m}(A^{(k)})V^{(k)}F^{(k)}P_{j,m}(B^{(k)})^{*}\right\|_{1}\\
  &\leq \limsup_{k\to\infty}\frac{1}{r}\left\|\sum_{j=1}^{r}P_{j,m}(A^{(k)})\otimes \overline{P_{j,m}(B^{(k)})}\#\right\|_{\infty,1}\\
  &\leq \frac{C}{r}\left\|\sum_{j=1}^{r}P_{j,m}(s)\otimes \overline{P_{j,m}(s)}\right\|_{\infty}\\
  &\leq CD.
\end{align*}
So we have shown that for every $m\in \N$ we have
\[\tau(f)-CD\leq\frac{2}{r}\sum_{j=1}^{r}\|P_{j,m}(s)-u_{j}\|_{1}.\]
Since $D<\frac{\tau(f)}{C},$ we obtain a contradiction by letting $m\to\infty.$

Conjecture \ref{conj:random Jung} implies Conjecture \ref{C:1bounded ent conjecture}: This follows from Theorem \ref{thm:as conjugation} (\ref{item:almost sure conjugacy}).

Conjecture \ref{C:1bounded ent conjecture} implies the Peterson-Thom conjecture: This is the content of Proposition \ref{P:CPE implies PT}.

\end{proof}

We remark that it is likely helpful to consider \emph{operator spaces and operator space tensor products} to tackle Conjecture \ref{conj:pq conjecture}. For instance, one can imagine that instead of working with $M_{n}(\C)\otimes M_{n}(\C)$ one considers $S^{p}(n,\tr)\otimes_{\alpha}S^{q}(n,\tr)$ for some $p,q\in [1,\infty]$ and some operator space tensor product $\otimes_{\alpha}.$ One would want to choose $\alpha$ so the map $S^{p}(n,\tr)\otimes_{\alpha}S^{q}(n,\tr)\to CB(M_{n}(\C),S^{1}(n,\tr))$ given by $A\otimes B\mapsto (C\mapsto ACB^{t})$ is completely bounded. It is natural to choose $p,q$ with $\frac{1}{p}+\frac{1}{q}=1$ so that \[\|ACB^{t}\|_{1}\leq \|A\|_{p}\|B\|_{\infty}\|C\|_{q}.\]
Thus it would make sense to consider an operator space tensor norm on $S^{1}(n,\tr)\otimes M_{n}(\C)$ or on $OS^{2}(n,\tr)\otimes OS^{2}(n,\tr)$ where $OS^{2}(n,\tr)$ is Pisier's operator space structure on $S^{2}(n,\tr)$ (or potentially other natural operator space structures on $S^{2}(n,\tr)$).

We close this section by mentioning that the full strength of Conjecture \ref{conj:tensor Haus} is not needed to deduce Conjecture \ref{conj:random Jung}. In fact, we only need that for all $P_{1},\cdots,P_{l}\in \C\ip{T_{1},\cdots,T_{r}}$ we have
\[\left\|\sum_{j=1}^{l}P_{l}(X^{(k)})\otimes \overline{P_{j}(Y^{(k)})}\right\|_{\infty}\to \left\|\sum_{j=1}^{l}P_{j}(s)\otimes \overline{P_{j}(s)}\right\|_{\infty}.\]
And so we can allow a certain symmetry in the elements of $\C\ip{T_{1},\cdots,T_{r},S_{1},\cdots,S_{r}}$ we are testing strong convergence on. Similar remarks apply to the other conjectures in this section. Lastly, in Conjectures \ref{conj:tensor Haus}, \ref{conj:random Jung},\ref{conj:pq conjecture} we may replace the GUE ensemble with Haar unitaries, or any other ensemble provided it has exponential concentration, and converges in law to the law of a generator $x$ of a free group factor with the property that $C^{*}(x)$ is locally reflexive. The details as to why these alternate conjectures imply the Peterson-Thom conjecture are the same as in Proposition \ref{prop:conejctures upon conjecture}.

\section{Closing Remarks}\label{s:close}
We close with some comments around Theorem \ref{T:main}. First is that in Theorem \ref{T:main} (\ref{item:tensor Haus}) it is crucial that we are taking $X^{(k)}$,$Y^{(k)}$ independent of each other. In fact, tensoring tends to behave rather poorly in the strong topology, as we now show.

\begin{defn}\label{D:nonamenability tuple}
Let $(M,\tau)$ be a tracial von Neumann algebra, and $J$ a countable index set. We say that $x\in M^{J}$ is a  \emph{nonamenability tuple} if
\begin{itemize}
    \item $\sup_{j}\|x_{j}\|_{\infty}<\infty,$
    \item there is a $\mu\in\Prob(J)$ so that
$\sum_{j\in J}\mu_{j}|x_{j}\otimes 1-1\otimes x_{j}^{op}|^{2}\in M\overline{\otimes}M^{op}$ is invertible.
\end{itemize}

\end{defn}
The sum in question in the second item converges in $\|\cdot\|_{\infty}$-norm.
By  \cite{Connes} (see also \cite[Theorem 10.2.9]{anantharaman-popa}), every nonamenable von Neumann algebra admits a finite nonamenability tuple.

\begin{prop}
Let $(M,\tau)$ be a tracial von Neumann algebra, $J$ a countable  index set and $x\in M^{J}.$
Suppose either that $x$ is
a nonamenability tuple, or that $W^{*}(x)$ is nonamenable and that $C^{*}(x)$ is locally reflexive. Fix an $R>0$ with $\sup_{j}\|x_{j}\|_{\infty}<\infty.$  Given any sequence  $n(k)\in \N,$ and $x_{k}\in M_{k}(\C)^{J}$ with $\sup_{k,j}\|x_{k,j}\|_{\infty}\leq R$ and  $\ell_{x_{k}}\to \ell_{x}$ strongly we have that $\ell_{x_{k}\otimes 1,1\otimes x_{k}^{t}}$ does not converge strongly to $\ell_{x\otimes 1,1\otimes x^{op}}.$
\end{prop}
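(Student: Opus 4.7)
The plan is to handle the two cases separately, both deriving a contradiction from the assumption of strong convergence of $\ell_{x_k\otimes 1, 1\otimes x_k^t}$ to $\ell_{x\otimes 1,1\otimes x^{op}}$.

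For the case where $W^*(x)$ is nonamenable and $C^*(x)$ is locally reflexive, the proof is essentially a one-line application of Proposition \ref{prop:almost unitary conjugacy}. We may replace $M$ by $W^*(x)$ without affecting any hypothesis, so assume $M=W^*(x)$. Apply the proposition with $Q = M$ to produce $r\in\N$, $F\in(\mathcal{F}_{R,J,\infty})^r$ with $F(x)\in Q^r$, and $\varepsilon>0$. Now specialize the conclusion to the \emph{deterministic} sequence $A^{(k)}=B^{(k)}=x_k$: the hypothesis of the proposition is precisely our assumption for contradiction, so we would obtain
\[
\liminf_{k\to\infty}d^{\orb{}}(F(x_k),F(x_k))\geq \varepsilon,
\]
which is absurd since $d^{\orb{}}(Y,Y)=0$ for every $Y$ (take $U = I$).

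For the case where $x$ is a nonamenability tuple with witness $\mu\in\Prob(J)$, the strategy is a direct spectral argument using Lemma \ref{L:strong is just Haus} rather than Proposition \ref{prop:almost unitary conjugacy} (in particular no local reflexivity is needed). Set
\[
T=\sum_{j\in J}\mu_j|x_j\otimes 1-1\otimes x_j^{op}|^2,\qquad T_k=\sum_{j\in J}\mu_j|x_{k,j}\otimes 1-1\otimes x_{k,j}^t|^2.
\]
By hypothesis $T$ is invertible, so there is $\delta>0$ with $\sigma(T)\subseteq[\delta,\|T\|_\infty]$. Both sums converge in norm uniformly in $k$ since $\|x_{k,j}\|_\infty,\|x_j\|_\infty\leq R$ and $\mu$ is a probability measure; so there is a finite $F\subseteq J$ with $\|T-T_F\|_\infty,\|T_k-T_{F,k}\|_\infty<\delta/4$ for all $k$, where the subscript $F$ denotes the partial sum over $F$. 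Since $T_{F,k}$ is a fixed self-adjoint noncommutative polynomial evaluated at $(x_{k,j}\otimes 1,1\otimes x_{k,j}^t)_{j\in F}$, the assumed strong convergence together with Lemma \ref{L:strong is just Haus} implies $\sigma(T_{F,k})\to\sigma(T_F)$ in the Hausdorff metric. Hence for all large $k$ we have $\sigma(T_{F,k})\subseteq[\delta/4,\infty)$, and therefore $\sigma(T_k)\subseteq[0,\infty)$ is bounded away from $0$ by, say, $\delta/8$.

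On the other hand, I would show directly that $0\in\sigma(T_k)$ for every $k$, giving the contradiction. Identify $M_{n(k)}(\C)\otimes M_{n(k)}(\C)$ with $B(S^2(n(k),\tr))$ via the $\#$-action. Then $(x_{k,j}\otimes 1-1\otimes x_{k,j}^t)\#$ is the commutator $[x_{k,j},\,\cdot\,]$, so $|x_{k,j}\otimes 1-1\otimes x_{k,j}^t|^2\#$ kills the identity matrix $I\in S^2(n(k),\tr)$. Summing, $T_k\#(I)=0$ and $I\neq 0$ in $S^2$, so $0\in\sigma(T_k\#)=\sigma(T_k)$, contradicting $\sigma(T_k)\subseteq[\delta/8,\infty)$.

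The substantive obstacle is only the bookkeeping around infinite sums and the $\#$-identification; the conceptual content is that tensoring $x_k$ with its transpose ``sees'' the trace vector $I\in S^2$, forcing $0$ into the spectrum of the Laplacian-type operator, whereas the invertibility built into the definition of a nonamenability tuple (equivalently, Haagerup's characterization used in case (b)) prevents the limit from having $0$ in its spectrum. Both arguments are really the same observation: strong convergence of the tensors is incompatible with the existence of the obvious ``tracial null vector'' on the matrix side when the target algebra is nonamenable.
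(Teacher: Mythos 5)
Your proof is correct and follows essentially the same route as the paper: the nonamenability-tuple case is the identical spectral argument (finite partial sum, Hausdorff convergence of spectra via Lemma \ref{L:strong is just Haus}, and the null vector $1\in S^{2}(n(k),\tr)$ forcing $0$ into the spectrum), and the locally reflexive case is the same specialization $A^{(k)}=B^{(k)}=x_{k}$, which the paper phrases via Proposition \ref{prop:not conj up} but which reduces to Proposition \ref{prop:almost unitary conjugacy} exactly as you do.
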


\begin{proof}
The case that $C^{*}(x)$ is locally reflexive and that $W^{*}(x)$ is nonamenable follows from Proposition \ref{prop:not conj up}, so we assume that $x$ is a nonamenability set.
Let $\mu\in\Prob(J)$ be so that
\[\sum_{j\in J}\mu_{j}|x_{j}\otimes 1-1\otimes x_{j}^{op}|^{2}\]
is invertible. Since invertible elements in a Banach algebra are open and the sum above converges in $\|\cdot\|_{\infty},$ it follows that we may choose a finite $F\subseteq J$ so that
\[\sum_{j\in F}\mu_{j}|x_{j}\otimes 1-1\otimes x_{j}^{op}|^{2}\]
is invertible.

By strong convergence and
 Lemma \ref{L:strong is just Haus},  the spectrum of $\sum_{j\in F}\mu_{j}|x_{k,j}\otimes 1-1\otimes x_{k,j}^{t}|^{2}$ Hausdorff converges to the spectrum of $\sum_{j\in F}\mu_{j}|x_{j}\otimes 1-1\otimes x_{j}^{op}|^{2}.$ Since $0$ is not in the spectrum of $\sum_{j\in F}\mu_{j}|x_{j}\otimes 1-1\otimes x_{j}^{op}|^{2}$, it follows that $0$ is not in the spectrum of $\sum_{j\in F}\mu_{j}|x_{k,j}\otimes 1-1\otimes x_{k,j}^{t}|^{2}$ for all sufficiently large $k.$ But since
\[\sum_{j\in F}\mu_{j}|x_{k,j}\otimes 1-1\otimes x_{k,j}^{t}|^{2}\#1=0,\]
we have that $0$ is in the spectrum of $\sum_{j\in J}\mu_{j}|x_{k,j}\otimes 1-1\otimes x_{k,j}^{t}|^{2}$ for all $k.$ So we have a contradiction, and this completes the proof.
\end{proof}

More positively, we remark that many previous proofs of strong convergence (e.g. for a mixture of deterministic and random matrices see \cite{Male, MaleCollins}) involve replacing some coordinates of the tuple with their strong limits. A similar approach holds here.

\begin{prop}
Let $r\geq 2$ be an integer and $s=(s_{1},\cdots,s_{r})$ a free semicircular family each with mean zero and variance one. Let $X^{(k)}$ be as in Theorem \ref{T:main} (\ref{item:tensor Haus}). In order to prove Theorem \ref{T:main}, it is enough to show that for any $P\in \C\ip{(T_{j})_{j=1}^{r},(S_{j})_{j=1}^{r}}$ we have
\[\left|\|P(X^{(k)}\otimes 1_{M_{k}(\C)},1_{M_{k}(\C)}\otimes Y^{(k)})\|_{\infty}-\|P(X^{(k)}\otimes 1_{C^{*}(s)},1_{M_{k}(\C)}\otimes s)\|_{\infty}\right|\to 0\]
in probability.
\end{prop}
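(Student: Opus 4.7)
The plan is to reduce Theorem \ref{T:main} (\ref{item:tensor Haus}) to the hypothesis by way of an intermediate ``deterministic'' norm. Write
\[
a_k = \|P(X^{(k)}\otimes 1_{M_k},\, 1_{M_k}\otimes Y^{(k)})\|, \quad b_k = \|P(X^{(k)}\otimes 1_{C^*(s)},\, 1_{M_k}\otimes s)\|,
\]
\[
c = \|P(s\otimes 1,\, 1\otimes s)\|_{C^*(s)\otimes_{\min}C^*(s)},
\]
with the $a_k$ norm computed in $M_k\otimes_{\min}M_k$ and the $b_k$ norm in $M_k\otimes_{\min}C^*(s)$. The hypothesis is $|a_k-b_k|\to 0$ in probability, and the desired conclusion is $a_k\to c$ in probability. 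By the triangle inequality, everything reduces to showing $b_k\to c$ in probability.

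For $b_k\to c$, I would invoke (i) the Haagerup-Thorbj\o rnsen theorem \cite{HTExt}, giving $X^{(k)}\to s$ strongly in probability, and (ii) exactness of $C^*(s)$, since it is generated by a free semicircular family \cite{DykemaExact,DykemaDimaExact}. Using commutativity of $X^{(k)}\otimes 1$ with $1\otimes s$, expand $P(X^{(k)}\otimes 1, 1\otimes s) = \sum_\alpha Q_\alpha(X^{(k)})\otimes R_\alpha(s)$ for a finite family of noncommutative polynomials $Q_\alpha, R_\alpha$ (and similarly with $s$ in place of $X^{(k)}$). Along any subsequence where $X^{(k)}\to s$ strongly almost surely, pass to an ultraproduct $\prod_\omega M_k$: the subalgebra generated by $(X^{(k)})_{k\to\omega}$ is $*$-isomorphic to $C^*(s)$ via $s_j\mapsto (X^{(k)}_j)_{k\to\omega}$, so the norm of $\sum_\alpha Q_\alpha((X^{(k)})_{k\to\omega})\otimes R_\alpha(s)$ inside $(\prod_\omega M_k)\otimes_{\min}C^*(s)$ equals $c$. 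Exactness of $C^*(s)$, via Kirchberg's characterization, gives an isometric embedding $(\prod_\omega M_k)\otimes_{\min}C^*(s) \hookrightarrow \prod_\omega(M_k\otimes_{\min}C^*(s))$, so this norm is also $\lim_{k\to\omega}b_k$. Since $\omega$ is arbitrary along the subsequence, $b_k\to c$ almost surely along the subsequence, and by a standard subsequence argument, $b_k\to c$ in probability on the full sequence. Combined with the hypothesis via triangle inequality, this yields Theorem \ref{T:main} (\ref{item:tensor Haus}) and hence the Peterson-Thom conjecture. Amplification statements of this flavor are also used in \cite{Male,MaleCollins,CollinsBordenave,CGPStrongTen}.

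The main obstacle is the amplification step: transferring strong convergence of $X^{(k)}$ to $s$ into norm convergence inside $-\otimes_{\min}C^*(s)$. Exactness of the right-hand copy of $C^*(s)$ is precisely the property that makes this transfer work -- without it, min tensor norms are not well-behaved under the relevant ultraproduct construction, and one cannot move the strong convergence of $X^{(k)}$ across the tensor. The proposition thus isolates the genuinely probabilistic content (replacing an independent iid copy $Y^{(k)}$ by a fixed free-semicircular $s$) from the deterministic operator-algebraic consequence of Haagerup-Thorbj\o rnsen and exactness, which is what the argument above supplies.
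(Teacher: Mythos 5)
Your proposal is correct and follows essentially the same route as the paper: reduce via the triangle inequality to showing $\|P(X^{(k)}\otimes 1_{C^{*}(s)},1_{M_{k}(\C)}\otimes s)\|_{\infty}\to\|P(s\otimes 1_{C^{*}(s)},1_{C^{*}(s)}\otimes s)\|_{\infty}$ in probability, then combine Haagerup--Thorbj\o rnsen strong convergence of $X^{(k)}$ with exactness of $C^{*}(s)$ to push the min tensor norm through the quotient. The paper phrases the last step with the explicit exact sequence $0\to J\otimes_{\min}C^{*}(s)\to B\otimes_{\min}C^{*}(s)\to A\otimes_{\min}C^{*}(s)\to 0$ for $A=\ell^{\infty}/c_{0}$ rather than your ultrafilter quotient, but this is the same argument in different packaging.
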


\begin{proof}
It suffices to show that
\[\|P(X^{(k)}\otimes 1_{C^{*}(s)},1_{M_{k}(\C)}\otimes s)\|_{\infty}\to \|P(s\otimes 1_{C^{*}(s)},1_{C^{*}(s)}\otimes s)\|_{\infty}\]
in probability. Let $\mu^{(k)}\in \Prob(M_{k}(\C)_{s.a}^{r})$ be the distribution of $(X^{(k)}).$
By Haagerup-Thorjb\o rnsen \cite[Theorem A]{HTExt}, we may find a sequence $\Omega_{k}\subseteq M_{k}(\C)_{s.a.}^{r}$ so that
\begin{itemize}
    \item $\mu^{(k)}(\Omega_{k})\to 1,$
    \item for all $(A^{(k)})_{k}\in \prod_{k}\Omega_{k}$ we have that $\ell_{A^{(k)}}\to \ell_{s}$ strongly.
\end{itemize}
Let
\[B=\{(a_{k})_{k}\in \prod_{k}M_{k}(\C):\sup_{k}\|a_{k}\|_{\infty}<\infty\},\,\,\,\,\, J=\{(a_{k})_{k}\in \prod_{k}M_{k}(\C):\|a_{k}\|_{\infty}\to_{k\to\infty}0\},\]
and set $A=B/J$.
Then $A$ is a $C^{*}$-algebra under the norm
\[\|(a_{k})_{k}+J\|=\limsup_{k\to\infty}\|a_{k}\|_{\infty}\]
and we have an exact sequence of $C^{*}$-algebras
\begin{equation}\label{E:exact}
\begin{CD}
0@>>> J @>>> B @>>> A @>>> 0.
\end{CD}
\end{equation}
For $(A^{(k)})_{k}\in \prod_{k}\Omega_{k},$ strong convergence guarantees that we have a $*$-homomorphism
\[\pi\colon C^{*}(s)\to B\]
satisfying $\pi(P(s))=(P(A^{(k)}))_{k}+J$ for all $P\in \C\ip{T_{1},T_{2},\cdots,T_{r}}.$  We thus have a natural $*$-homomorphism
\begin{equation}\label{E:exactness of s}
\pi\otimes \id\colon C^{*}(s)\otimes_{\min{}}C^{*}(s)\to B\otimes_{\min{}}C^{*}(s).
\end{equation}
Since $C^{*}(s)$ is exact, the exact sequence (\ref{E:exact}) produces an exact sequence
\begin{equation}\label{E:exact2}
\begin{CD}
0 @>>> J\otimes_{\min{}} C^{*}(s) @>>> B\otimes_{\min{}}C^{*}(s) @>>> A\otimes_{\min{}}C^{*}(s) @>>> 0.
\end{CD}
\end{equation}
We have  a natural identification
\[J\otimes_{\min{}} C^{*}(s)\cong\left\{(a_{k})_{k}\in \prod_{k}(M_{k}(\C)\otimes_{\min{}}C^{*}(s)):\|a_{k}\|_{\infty}\to 0\right\},\]
and a natural isometric embedding
\[B\otimes_{\min{}}C^{*}(s)\hookrightarrow \left\{(a_{k})_{k}\in \prod_{k}(M_{k}(\C)\otimes_{\min{}}C^{*}(s)):\sup_{k}\|a_{k}\|_{\infty}<\infty\right\}.\]
Combining this with $(\ref{E:exactness of s}),$ $(\ref{E:exact2}),$ we have  produced a $*$-homomorphism
\[C^{*}(s)\otimes C^{*}(s)\to \frac{\{(a_{k})_{k}\in \prod_{k}(M_{k}(\C)\otimes_{\min{}}C^{*}(s)):\sup_{k}\|a_{k}\|_{\infty}<\infty\}}{\{(a_{k})_{k}\in \prod_{k}M_{k}(\C)\otimes_{\min{}}C^{*}(s):\|a_{k}\|_{\infty}\to 0\}}\]
satisfying
\[P(s\otimes 1_{C^{*}(s)},1_{C^{*}(s)}\otimes s)\mapsto (P(A^{(k)}\otimes 1_{C^{*}(s)},1_{C^{*}(s)}\otimes s))_{k}+\left\{(a_{k})_{k}\in \prod_{k}(M_{k}(\C)\otimes_{\min{}}C^{*}(s)):\|a_{k}\|_{\infty}\to 0\right\}\]
for all $P\in \C\ip{T_{1},\cdots,T_{r},S_{1},\cdots,S_{r}}$. Since $*$-homomorphisms between $C^{*}$-algebras are contractive, this implies that
\[\|P(s\otimes 1_{C^{*}(s)},1_{C^{*}(s)}\otimes s)\|_{\infty}\geq \limsup_{k\to\infty}\|P(A^{(k)}\otimes 1_{C^{*}(s)},1_{C^{*}(s)}\otimes s)\|_{\infty}.\]
The inequality
\[\|P(s\otimes 1_{C^{*}(s)},1_{C^{*}(s)}\otimes s)\|_{\infty}\leq \liminf_{k\to\infty}\|P(A^{(k)}\otimes 1_{C^{*}(s)},1_{C^{*}(s)}\otimes s)\|_{\infty}\]
is a consequence of the weak$^{*}$-convergence of the law of $(A^{(k)}\otimes 1_{C^{*}(s)},1_{M_{k}(\C)}\otimes s)$ to the law of  $(s\otimes 1_{C^{*}(s)},1_{C^{*}(s)}\otimes s)$. So we have shown
\[\|P(s\otimes 1_{C^{*}(s)},1_{C^{*}(s)}\otimes s)\|_{\infty}=\lim_{k\to\infty}\|P(A^{(k)}\otimes 1_{C^{*}(s)},1_{C^{*}(s)}\otimes s)\|_{\infty}\]
for all $(A^{(k)})_{k}\in \prod_{k}\Omega_{k}$ and all $P\in \C\ip{T_{1},\cdots,T_{r},S_{1},\cdots,S_{r}}.$ Since $\mu^{(k)}(\Omega_{k})\to 1,$ it follows that
\[\|P(X^{(k)}\otimes 1_{C^{*}(s)},1_{C^{*}(s)}\otimes s)\|_{\infty}\to\|P(s\otimes 1_{C^{*}(s)},1_{C^{*}(s)}\otimes s)\|_{\infty}\]
in probability.

\end{proof}

\begin{bibdiv}
\begin{biblist}

\bib{anantharaman-popa}{article}{
      author={Anantharaman, Claire},
      author={Popa, Sorin},
       title={An introduction to {$II_1$} factors},
        date={2016},
     journal={book in progress},
}

\bib{Atiyah}{article}{
      author={Atiyah, M.~F.},
       title={Elliptic operators, discrete groups and von {N}eumann algebras},
        date={1976},
       pages={43\ndash 72. Ast\'{e}risque, No. 32\ndash 33},
      review={\MR{0420729}},
}

\bib{ScottSri2019ultraproduct}{article}{
      author={Atkinson, Scott},
      author={Kunnawalkam~Elayavalli, Srivatsav},
       title={On ultraproduct embeddings and amenability for tracial von
  {N}eumann algebras},
        date={2021},
        ISSN={1073-7928},
     journal={Int. Math. Res. Not. IMRN},
      number={4},
       pages={2882\ndash 2918},
         url={https://doi.org/10.1093/imrn/rnaa257},
      review={\MR{4218341}},
}

\bib{MatrixConc}{article}{
      author={Bandeira, Afonso},
      author={Boedihardjo, March},
      author={van Handel, Ramon},
       title={Matrix concentration inequalities and free probability},
     journal={arXiv:2108.06312},
}

\bib{CollinsBordenave}{article}{
      author={Bordenave, Charles},
      author={Collins, Beno\^{\i}t},
       title={Eigenvalues of random lifts and polynomials of random permutation
  matrices},
        date={2019},
        ISSN={0003-486X},
     journal={Ann. of Math. (2)},
      volume={190},
      number={3},
       pages={811\ndash 875},
         url={https://doi.org/10.4007/annals.2019.190.3.3},
      review={\MR{4024563}},
}

\bib{BBSTWW2019}{book}{
      author={Bosa, Joan},
      author={Brown, Nate},
      author={Sato, Yasuhiko},
      author={Tikuisis, Aaron},
      author={White, Stuart},
      author={Winter, Wilhelm},
       title={Covering dimension of $\mathrm{C}^*$-algebras and $2$-coloured
  classification},
      series={Mem. Amer. Math. Soc.},
   publisher={Amer. Math. Soc.},
        date={2019},
      volume={257},
}

\bib{BC2015}{article}{
      author={Boutonnet, R{\'e}mi},
      author={Carderi, Alessandro},
       title={Maximal amenable von {N}eumann subalgebras arising from maximal
  amenable subgroups},
        date={2015Dec},
        ISSN={1420-8970},
     journal={Geometric and Functional Analysis},
      volume={25},
      number={6},
       pages={1688\ndash 1705},
         url={https://doi.org/10.1007/s00039-015-0348-1},
}

\bib{HBAbsor}{article}{
      author={Boutonnet, R{\'e}mi},
      author={Houdayer, Cyril},
       title={Amenable absorption in amalgamated free product von {N}eumann
  algebras},
        date={201807},
     journal={Kyoto J. Math.},
      volume={58},
      number={3},
       pages={583\ndash 593},
}

\bib{Brothier}{article}{
      author={Brothier, Arnaud},
       title={The cup subalgebra of a {${\rm II}_1$} factor given by a
  subfactor planar algebra is maximal amenable},
        date={2014},
        ISSN={0030-8730},
     journal={Pacific J. Math.},
      volume={269},
      number={1},
       pages={19\ndash 29},
         url={https://doi.org/10.2140/pjm.2014.269.19},
      review={\MR{3233908}},
}

\bib{2AuthorsOneCup}{article}{
      author={Brothier, Arnaud},
      author={Wen, Chenxu},
       title={The cup subalgebra has the absorbing amenability property},
        date={2016},
        ISSN={0129-167X},
     journal={Internat. J. Math.},
      volume={27},
      number={2},
       pages={1650013, 6},
         url={https://doi.org/10.1142/S0129167X16500130},
      review={\MR{3464393}},
}

\bib{BO}{book}{
      author={Brown, Nathanial~P.},
      author={Ozawa, Narutaka},
       title={{$C^*$}-algebras and finite-dimensional approximations},
      series={Graduate Studies in Mathematics},
   publisher={American Mathematical Society, Providence, RI},
        date={2008},
      volume={88},
        ISBN={978-0-8218-4381-9; 0-8218-4381-8},
         url={https://doi.org/10.1090/gsm/088},
      review={\MR{2391387}},
}

\bib{CFRW}{article}{
      author={Cameron, Jan},
      author={Fang, Junsheng},
      author={Ravichandran, Mohan},
      author={White, Stuart},
       title={The radial masa in a free group factor is maximal injective},
        date={2010},
        ISSN={0024-6107},
     journal={J. Lond. Math. Soc. (2)},
      volume={82},
      number={3},
       pages={787\ndash 809},
         url={https://doi.org/10.1112/jlms/jdq052},
      review={\MR{2739068}},
}

\bib{CETW2020}{misc}{
      author={Castillejos, Jorge},
      author={Evington, Samuel},
      author={Tikuisis, Aaron},
      author={White, Stuart},
       title={Classifying maps into uniform tracial sequence algebras},
        date={2020},
}

\bib{CETWW2019}{misc}{
      author={Castillejos, Jorge},
      author={Evington, Samuel},
      author={Tikuisis, Aaron},
      author={White, Stuart},
      author={Winter, Wilhelm},
       title={Nuclear dimension of simple c*-algebras},
        date={2019},
}

\bib{ChifanSinclair}{article}{
      author={Chifan, Ionu\c{t}},
      author={Sinclair, Thomas},
       title={On the structural theory of {${\rm II}_1$} factors of negatively
  curved groups},
        date={2013},
        ISSN={0012-9593},
     journal={Ann. Sci. \'{E}c. Norm. Sup\'{e}r. (4)},
      volume={46},
      number={1},
       pages={1\ndash 33 (2013)},
         url={https://doi.org/10.24033/asens.2183},
      review={\MR{3087388}},
}

\bib{CGPStrongTen}{article}{
      author={Collins, Beno\^{i}t},
      author={Guionne, Alice~t},
      author={Parraud, F\'{e}lix},
       title={On the operator norm of non-commutative polynomials in
  deterministic matrices and iid {GUE} matrices},
        date={2019},
     journal={to appear in Camb. J. Math.},
      eprint={1912.04588},
}

\bib{MaleCollins}{article}{
      author={Collins, Beno\^{i}t},
      author={Male, Camille},
       title={The strong asymptotic freeness of {H}aar and deterministic
  matrices},
        date={2014},
        ISSN={0012-9593},
     journal={Ann. Sci. \'{E}c. Norm. Sup\'{e}r. (4)},
      volume={47},
      number={1},
       pages={147\ndash 163},
         url={https://doi.org/10.24033/asens.2211},
      review={\MR{3205602}},
}

\bib{Connes}{article}{
      author={Connes, Alain},
       title={Classification of injective factors. {C}ases {$II_{1},$}
  {$II_{\infty },$} {$III_{\lambda },$} {$\lambda \not=1$}},
        date={1976},
        ISSN={0003-486X},
     journal={Ann. of Math. (2)},
      volume={104},
      number={1},
       pages={73\ndash 115},
         url={https://doi.org/10.2307/1971057},
      review={\MR{0454659}},
}

\bib{DykemaFreeEntropy}{article}{
      author={Dykema, Kenneth~J.},
       title={Two applications of free entropy},
        date={1997},
        ISSN={0025-5831},
     journal={Math. Ann.},
      volume={308},
      number={3},
       pages={547\ndash 558},
         url={https://doi.org/10.1007/s002080050088},
      review={\MR{1457745}},
}

\bib{DykemaExact}{article}{
      author={Dykema, Kenneth~J.},
       title={Exactness of reduced amalgamated free product {$C^*$}-algebras},
        date={2004},
        ISSN={0933-7741},
     journal={Forum Math.},
      volume={16},
      number={2},
       pages={161\ndash 180},
         url={https://doi.org/10.1515/form.2004.008},
      review={\MR{2039095}},
}

\bib{DykemaDimaExact}{article}{
      author={Dykema, Kenneth~J.},
      author={Shlyakhtenko, Dimitri},
       title={Exactness of {C}untz-{P}imsner {$C^*$}-algebras},
        date={2001},
        ISSN={0013-0915},
     journal={Proc. Edinb. Math. Soc. (2)},
      volume={44},
      number={2},
       pages={425\ndash 444},
         url={https://doi.org/10.1017/S001309159900125X},
      review={\MR{1880402}},
}

\bib{EffrosRuan}{book}{
      author={Effros, Edward~G.},
      author={Ruan, Zhong-Jin},
       title={Operator spaces},
      series={London Mathematical Society Monographs. New Series},
   publisher={The Clarendon Press, Oxford University Press, New York},
        date={2000},
      volume={23},
        ISBN={0-19-853482-5},
      review={\MR{1793753}},
}

\bib{FangMaximalAmena}{article}{
      author={Fang, Junsheng},
       title={On maximal injective subalgebras of tensor products of von
  {N}eumann algebras},
        date={2007},
        ISSN={0022-1236},
     journal={J. Funct. Anal.},
      volume={244},
      number={1},
       pages={277\ndash 288},
         url={https://doi.org/10.1016/j.jfa.2006.12.006},
      review={\MR{2294484}},
}

\bib{Gab2}{article}{
      author={Gaboriau, Damien},
       title={Invariants {$l^2$} de relations d'\'{e}quivalence et de groupes},
        date={2002},
        ISSN={0073-8301},
     journal={Publ. Math. Inst. Hautes \'{E}tudes Sci.},
      number={95},
       pages={93\ndash 150},
         url={https://doi.org/10.1007/s102400200002},
      review={\MR{1953191}},
}

\bib{Gao10}{article}{
      author={Gao, Mingchu},
       title={On maximal injective subalgebras},
        date={2010},
        ISSN={0002-9939},
     journal={Proc. Amer. Math. Soc.},
      volume={138},
      number={6},
       pages={2065\ndash 2070},
         url={https://doi.org/10.1090/S0002-9939-10-10219-6},
      review={\MR{2596043}},
}

\bib{Ge96}{article}{
      author={Ge, Liming},
       title={On maximal injective subalgebras of factors},
        date={1996},
        ISSN={0001-8708},
     journal={Adv. Math.},
      volume={118},
      number={1},
       pages={34\ndash 70},
         url={https://doi.org/10.1006/aima.1996.0017},
      review={\MR{1375951}},
}

\bib{GePrime}{article}{
      author={Ge, Liming},
       title={Applications of free entropy to finite von {N}eumann algebras.
  {II}},
        date={1998},
        ISSN={0003-486X},
     journal={Ann. of Math. (2)},
      volume={147},
      number={1},
       pages={143\ndash 157},
         url={https://doi.org/10.2307/120985},
      review={\MR{1609522}},
}

\bib{PopaGeThin}{article}{
      author={Ge, Liming},
      author={Popa, Sorin},
       title={On some decomposition properties for factors of type {${\rm
  II}_1$}},
        date={1998},
        ISSN={0012-7094},
     journal={Duke Math. J.},
      volume={94},
      number={1},
       pages={79\ndash 101},
         url={https://doi.org/10.1215/S0012-7094-98-09405-4},
      review={\MR{1635904}},
}

\bib{HaagerupAmenable}{incollection}{
      author={Haagerup, Uffe},
       title={Injectivity and decomposition of completely bounded maps},
        date={1985},
   booktitle={Operator algebras and their connections with topology and ergodic
  theory ({B}u\c{s}teni, 1983)},
      series={Lecture Notes in Math.},
      volume={1132},
   publisher={Springer, Berlin},
       pages={170\ndash 222},
         url={https://doi.org/10.1007/BFb0074885},
      review={\MR{799569}},
}

\bib{HaagThorbNormBound}{article}{
      author={Haagerup, Uffe},
      author={Thorbj{{\o}}rnsen, Steen},
       title={Random matrices with complex {G}aussian entries},
        date={2003},
        ISSN={0723-0869},
     journal={Expo. Math.},
      volume={21},
      number={4},
       pages={293\ndash 337},
         url={http://dx.doi.org/10.1016/S0723-0869(03)80036-1},
      review={\MR{2022002}},
}

\bib{HTExt}{article}{
      author={Haagerup, Uffe},
      author={Thorbj{\o}rnsen, Steen},
       title={A new application of random matrices: {${\rm Ext}(C^*_{\rm
  red}(F_2))$} is not a group},
        date={2005},
        ISSN={0003-486X},
     journal={Ann. of Math. (2)},
      volume={162},
      number={2},
       pages={711\ndash 775},
         url={https://doi.org/10.4007/annals.2005.162.711},
      review={\MR{2183281}},
}

\bib{Me8}{article}{
      author={Hayes, Ben},
       title={1-bounded entropy and regularity problems in von {N}eumann
  algebras},
        date={2018},
        ISSN={1073-7928},
     journal={Int. Math. Res. Not. IMRN},
      number={1},
       pages={57\ndash 137},
         url={https://doi.org/10.1093/imrn/rnw237},
      review={\MR{3801429}},
}

\bib{Me6}{article}{
      author={Hayes, Ben},
       title={Polish models and sofic entropy},
        date={2018},
        ISSN={1474-7480},
     journal={J. Inst. Math. Jussieu},
      volume={17},
      number={2},
       pages={241\ndash 275},
         url={https://doi.org/10.1017/S1474748015000468},
      review={\MR{3773269}},
}

\bib{FreePinsker}{article}{
      author={Hayes, Ben},
      author={Jekel, David},
      author={Nelson, Brent},
      author={Sinclair, Thomas},
       title={A random matrix approach to absorption in free products},
        date={2021},
        ISSN={1073-7928},
     journal={Int. Math. Res. Not. IMRN},
      number={3},
       pages={1919\ndash 1979},
         url={https://doi.org/10.1093/imrn/rnaa191},
      review={\MR{4206601}},
}

\bib{Houdayerexotic}{article}{
      author={Houdayer, Cyril},
       title={A class of {$\textrm{II}_1$} factors with an exotic abelian
  maximal amenable subalgebra},
        date={2014},
        ISSN={0002-9947},
     journal={Trans. Amer. Math. Soc.},
      volume={366},
      number={7},
       pages={3693\ndash 3707},
         url={https://doi.org/10.1090/S0002-9947-2014-05964-3},
      review={\MR{3192613}},
}

\bib{CyrilSAOP}{article}{
      author={Houdayer, Cyril},
       title={Structure of {$\textrm{II}_1$} factors arising from free
  {B}ogoljubov actions of arbitrary groups},
        date={2014},
        ISSN={0001-8708},
     journal={Adv. Math.},
      volume={260},
       pages={414\ndash 457},
         url={https://doi.org/10.1016/j.aim.2014.04.010},
      review={\MR{3209358}},
}

\bib{CyrilAOP}{article}{
      author={Houdayer, Cyril},
       title={Gamma stability in free product von {N}eumann algebras},
        date={2015},
        ISSN={0010-3616},
     journal={Comm. Math. Phys.},
      volume={336},
      number={2},
       pages={831\ndash 851},
         url={https://doi.org/10.1007/s00220-014-2237-0},
      review={\MR{3322388}},
}

\bib{HoudayerShlyakhtenko}{article}{
      author={Houdayer, Cyril},
      author={Shlyakhtenko, Dimitri},
       title={Strongly solid {${\textrm II}_1$} factors with an exotic {MASA}},
        date={2011},
        ISSN={1073-7928},
     journal={Int. Math. Res. Not. IMRN},
      number={6},
       pages={1352\ndash 1380},
      review={\MR{2806507}},
}

\bib{Io12b}{incollection}{
      author={Ioana, Adrian},
       title={Classification and rigidity for von {N}eumann algebras},
        date={2014},
   booktitle={European {C}ongress of {M}athematics},
   publisher={Eur. Math. Soc., Z\"urich},
       pages={601\ndash 625},
}

\bib{Io17c}{inproceedings}{
      author={Ioana, Adrian},
       title={Rigidity for von {N}eumann algebras},
        date={2018},
   booktitle={Proceedings of the {I}nternational {C}ongress of
  {M}athematicians---{R}io de {J}aneiro 2018. {V}ol. {III}. {I}nvited
  lectures},
   publisher={World Sci. Publ., Hackensack, NJ},
       pages={1639\ndash 1672},
      review={\MR{3966823}},
}

\bib{JekelConvexPot}{article}{
      author={Jekel, David},
       title={An elementary approach to free entropy theory for convex
  potentials},
        date={2018},
     journal={arXiv:1805.08814},
        note={To appear in Analysis and PDE},
}

\bib{JekelEAP}{article}{
      author={Jekel, David},
       title={Conditional expectation, entropy, and transport for convex gibbs
  laws in free probability},
        date={2019},
     journal={arXiv:1906.10051},
}

\bib{JekelThesis}{thesis}{
      author={Jekel, David},
       title={Evolution equations in non-commutative probability},
        type={Ph.D. Thesis},
        date={2020},
         url={https://escholarship.org/uc/item/8n39f7mt},
}

\bib{JungConj}{article}{
      author={Jung, Kenley},
       title={Amenability, tubularity, and embeddings into
  {$\mathcal{R}^\omega$}},
        date={2007},
        ISSN={0025-5831},
     journal={Math. Ann.},
      volume={338},
      number={1},
       pages={241\ndash 248},
         url={https://doi.org/10.1007/s00208-006-0074-y},
      review={\MR{2295511}},
}

\bib{JungSB}{article}{
      author={Jung, Kenley},
       title={Strongly 1-bounded von {N}eumann algebras},
        date={2007},
        ISSN={1016-443X},
     journal={Geom. Funct. Anal.},
      volume={17},
      number={4},
       pages={1180\ndash 1200},
         url={https://doi.org/10.1007/s00039-007-0624-9},
      review={\MR{2373014}},
}

\bib{KirchbergExact}{article}{
      author={Kirchberg, Eberhard},
       title={Commutants of unitaries in {UHF} algebras and functorial
  properties of exactness},
        date={1994},
        ISSN={0075-4102},
     journal={J. Reine Angew. Math.},
      volume={452},
       pages={39\ndash 77},
         url={https://doi.org/10.1515/crll.1994.452.39},
      review={\MR{1282196}},
}

\bib{KirchbergCAR}{article}{
      author={Kirchberg, Eberhard},
       title={On subalgebras of the {CAR}-algebra},
        date={1995},
        ISSN={0022-1236},
     journal={J. Funct. Anal.},
      volume={129},
      number={1},
       pages={35\ndash 63},
         url={https://doi.org/10.1006/jfan.1995.1041},
      review={\MR{1322641}},
}

\bib{Bleary}{article}{
      author={Leary, Brian},
       title={Maximal amenability with asymptotic orthogonality in amalgamated
  free products},
        date={2021},
        ISSN={0379-4024},
     journal={J. Operator Theory},
      volume={86},
      number={1},
       pages={17\ndash 29},
         url={https://doi.org/10.7900/jot},
      review={\MR{4272761}},
}

\bib{LedouxCM}{book}{
      author={Ledoux, Michel},
       title={The concentration of measure phenomenon},
      series={Mathematical Surveys and Monographs},
   publisher={American Mathematical Society, Providence, RI},
        date={2001},
      volume={89},
        ISBN={0-8218-2864-9},
      review={\MR{1849347}},
}

\bib{Male}{article}{
      author={Male, Camille},
       title={The norm of polynomials in large random and deterministic
  matrices},
        date={2012},
        ISSN={0178-8051},
     journal={Probab. Theory Related Fields},
      volume={154},
      number={3-4},
       pages={477\ndash 532},
         url={https://doi.org/10.1007/s00440-011-0375-2},
        note={With an appendix by Dimitri Shlyakhtenko},
      review={\MR{3000553}},
}

\bib{Meckes2013}{article}{
      author={Meckes, Elizabeth~S.},
      author={Meckes, Mark~W.},
       title={Spectral powers of random matrices},
        date={2013},
     journal={Electron. Comm. Probab.},
      volume={18},
      number={78},
}

\bib{Ozawa2013}{article}{
      author={Ozawa, Narutaka},
       title={Dixmier approximation and symmetric amenability for
  $\mathrm{C}^*$-algebras},
        date={2013},
     journal={J. Math. Sci. Univ. Tokyo},
      volume={20},
       pages={349\ndash 374},
}

\bib{OzAbsor}{article}{
      author={Ozawa, Narutaka},
       title={A remark on amenable von {N}eumann subalgebras in a tracial free
  product},
        date={2015},
        ISSN={0386-2194},
     journal={Proc. Japan Acad. Ser. A Math. Sci.},
      volume={91},
      number={7},
       pages={104},
         url={https://doi.org/10.3792/pjaa.91.104},
      review={\MR{3365404}},
}

\bib{OzPopaCartan}{article}{
      author={Ozawa, Narutaka},
      author={Popa, Sorin},
       title={On a class of {${\rm II}_1$} factors with at most one {C}artan
  subalgebra},
        date={2010},
        ISSN={0003-486X},
     journal={Ann. of Math. (2)},
      volume={172},
      number={1},
       pages={713\ndash 749},
         url={https://doi.org/10.4007/annals.2010.172.713},
      review={\MR{2680430}},
}

\bib{OzPopaII}{article}{
      author={Ozawa, Narutaka},
      author={Popa, Sorin},
       title={On a class of {${\rm II}_1$} factors with at most one {C}artan
  subalgebra, {II}},
        date={2010},
        ISSN={0002-9327},
     journal={Amer. J. Math.},
      volume={132},
      number={3},
       pages={841\ndash 866},
         url={https://doi.org/10.1353/ajm.0.0121},
      review={\MR{2666909}},
}

\bib{ParShiWen}{article}{
      author={Parekh, Sandeepan},
      author={Shimada, Koichi},
      author={Wen, Chenxu},
       title={Maximal amenability of the generator subalgebra in
  {$q$}-{G}aussian von {N}eumann algebras},
        date={2018},
        ISSN={0379-4024},
     journal={J. Operator Theory},
      volume={80},
      number={1},
       pages={125\ndash 152},
         url={https://doi.org/10.7900/jot.2017jun28.2167},
      review={\MR{3835452}},
}

\bib{PetersonDeriva}{article}{
      author={Peterson, Jesse},
       title={{$L^2$}-rigidity in von {N}eumann algebras},
        date={2009},
        ISSN={0020-9910},
     journal={Invent. Math.},
      volume={175},
      number={2},
       pages={417\ndash 433},
         url={https://doi.org/10.1007/s00222-008-0154-6},
      review={\MR{2470111}},
}

\bib{PetersonThom}{article}{
      author={Peterson, Jesse},
      author={Thom, Andreas},
       title={Group cocycles and the ring of affiliated operators},
        date={2011},
        ISSN={0020-9910},
     journal={Invent. Math.},
      volume={185},
      number={3},
       pages={561\ndash 592},
         url={https://doi.org/10.1007/s00222-011-0310-2},
      review={\MR{2827095}},
}

\bib{PisierSubExp}{article}{
      author={Pisier, Gilles},
       title={Random matrices and subexponential operator spaces},
        date={2014},
        ISSN={0021-2172},
     journal={Israel J. Math.},
      volume={203},
      number={1},
       pages={223\ndash 273},
         url={https://doi.org/10.1007/s11856-014-1069-0},
      review={\MR{3273440}},
}

\bib{PopaMaximalAmenable}{article}{
      author={Popa, Sorin},
       title={Maximal injective subalgebras in factors associated with free
  groups},
        date={1983},
        ISSN={0001-8708},
     journal={Adv. in Math.},
      volume={50},
      number={1},
       pages={27\ndash 48},
         url={https://doi.org/10.1016/0001-8708(83)90033-6},
      review={\MR{720738}},
}

\bib{PopaL2Betti}{article}{
      author={Popa, Sorin},
       title={On a class of type {${\rm II}_1$} factors with {B}etti numbers
  invariants},
        date={2006},
        ISSN={0003-486X},
     journal={Ann. of Math. (2)},
      volume={163},
      number={3},
       pages={809\ndash 899},
         url={https://doi.org/10.4007/annals.2006.163.809},
      review={\MR{2215135}},
}

\bib{Po01a}{article}{
      author={Popa, Sorin},
       title={Some rigidity results for non-commutative {B}ernoulli shifts},
        date={2006},
        ISSN={0022-1236},
     journal={J. Funct. Anal.},
      volume={230},
      number={2},
       pages={273\ndash 328},
         url={http://dx.doi.org/10.1016/j.jfa.2005.06.017},
      review={\MR{2186215 (2007b:46106)}},
}

\bib{PopaStrongRigidity}{article}{
      author={Popa, Sorin},
       title={Strong rigidity of {$\rm II_1$} factors arising from malleable
  actions of {$w$}-rigid groups. {I}},
        date={2006},
        ISSN={0020-9910},
     journal={Invent. Math.},
      volume={165},
      number={2},
       pages={369\ndash 408},
         url={https://doi.org/10.1007/s00222-006-0501-4},
      review={\MR{2231961}},
}

\bib{PopaStrongRigidtyII}{article}{
      author={Popa, Sorin},
       title={Strong rigidity of {$\rm II_1$} factors arising from malleable
  actions of {$w$}-rigid groups. {II}},
        date={2006},
        ISSN={0020-9910},
     journal={Invent. Math.},
      volume={165},
      number={2},
       pages={409\ndash 451},
         url={https://doi.org/10.1007/s00222-006-0502-3},
      review={\MR{2231962}},
}

\bib{PopaICM}{incollection}{
      author={Popa, Sorin},
       title={Deformation and rigidity for group actions and von {N}eumann
  algebras},
        date={2007},
   booktitle={International {C}ongress of {M}athematicians. {V}ol. {I}},
   publisher={Eur. Math. Soc., Z\"{u}rich},
       pages={445\ndash 477},
         url={https://doi.org/10.4171/022-1/18},
      review={\MR{2334200}},
}

\bib{Shen06}{article}{
      author={Shen, Junhao},
       title={Maximal injective subalgebras of tensor products of free group
  factors},
        date={2006},
        ISSN={0022-1236},
     journal={J. Funct. Anal.},
      volume={240},
      number={2},
       pages={334\ndash 348},
         url={https://doi.org/10.1016/j.jfa.2006.03.017},
      review={\MR{2261686}},
}

\bib{Taka}{book}{
      author={Takesaki, M.},
       title={Theory of operator algebras. {I}},
      series={Encyclopaedia of Mathematical Sciences},
   publisher={Springer-Verlag, Berlin},
        date={2002},
      volume={124},
        ISBN={3-540-42248-X},
        note={Reprint of the first (1979) edition, Operator Algebras and
  Non-commutative Geometry, 5},
      review={\MR{1873025}},
}

\bib{Va06a}{article}{
      author={Vaes, Stefaan},
       title={Rigidity results for {B}ernoulli actions and their von {N}eumann
  algebras (after {S}orin {P}opa)},
        date={2007},
        ISSN={0303-1179},
     journal={Ast\'erisque},
      number={311},
       pages={Exp. No. 961, viii, 237\ndash 294},
        note={S{\'e}minaire Bourbaki. Vol. 2005/2006},
      review={\MR{2359046}},
}

\bib{Va10a}{inproceedings}{
      author={Vaes, Stefaan},
       title={Rigidity for von {N}eumann algebras and their invariants},
        date={2010},
   booktitle={Proceedings of the {I}nternational {C}ongress of
  {M}athematicians. {V}olume {III}},
   publisher={Hindustan Book Agency, New Delhi},
       pages={1624\ndash 1650},
      review={\MR{2827858 (2012g:46006)}},
}

\bib{VoiculescuDykemaNica}{book}{
      author={Voiculescu, D.~V.},
      author={Dykema, K.~J.},
      author={Nica, A.},
       title={Free random variables},
      series={CRM Monograph Series},
   publisher={American Mathematical Society, Providence, RI},
        date={1992},
      volume={1},
        ISBN={0-8218-6999-X},
        note={A noncommutative probability approach to free products with
  applications to random matrices, operator algebras and harmonic analysis on
  free groups},
      review={\MR{1217253}},
}

\bib{VoicAsyFree}{article}{
      author={Voiculescu, Dan},
       title={Limit laws for random matrices and free products},
        date={1991},
        ISSN={0020-9910},
     journal={Invent. Math.},
      volume={104},
      number={1},
       pages={201\ndash 220},
         url={http://dx.doi.org/10.1007/BF01245072},
      review={\MR{1094052}},
}

\bib{FreeEntropyDimensionII}{article}{
      author={Voiculescu, Dan},
       title={The analogues of entropy and of {F}isher's information measure in
  free probability theory. {II}},
        date={1994},
        ISSN={0020-9910},
     journal={Invent. Math.},
      volume={118},
      number={3},
       pages={411\ndash 440},
         url={https://doi.org/10.1007/BF01231539},
      review={\MR{1296352}},
}

\bib{FreeEntropyDimensionIII}{article}{
      author={Voiculescu, Dan},
       title={The analogues of entropy and of {F}isher's information measure in
  free probability theory. {III}. {T}he absence of {C}artan subalgebras},
        date={1996},
        ISSN={1016-443X},
     journal={Geom. Funct. Anal.},
      volume={6},
      number={1},
       pages={172\ndash 199},
         url={https://doi.org/10.1007/BF02246772},
      review={\MR{1371236}},
}

\bib{VoicAsyFreeStrong}{article}{
      author={Voiculescu, Dan},
       title={A strengthened asymptotic freeness result for random matrices
  with applications to free entropy},
        date={1998},
        ISSN={1073-7928},
     journal={Internat. Math. Res. Notices},
      number={1},
       pages={41\ndash 63},
         url={http://dx.doi.org/10.1155/S107379289800004X},
      review={\MR{1601878}},
}

\bib{WenAOP}{article}{
      author={Wen, Chenxu},
       title={Maximal amenability and disjointness for the radial masa},
        date={2016},
        ISSN={0022-1236},
     journal={J. Funct. Anal.},
      volume={270},
      number={2},
       pages={787\ndash 801},
         url={https://doi.org/10.1016/j.jfa.2015.08.013},
      review={\MR{3425903}},
}

\end{biblist}
\end{bibdiv}


\end{document}